\documentclass[11pt]{amsart}

\usepackage{epsf,amssymb,overpic,amscd,diagrams,psfrag,times}
\usepackage[all]{xy}

\usepackage{xr}
\externaldocument[P1-]{part1-2017-04-06-ihes}
\externaldocument[P2-]{part2-2017-04-06-ihes}
\externaldocument[P0-]{binding-2017-06-07}

\newtheorem{thm}{Theorem}[section]
\newtheorem{prop}[thm]{Proposition}
\newtheorem{lemma}[thm]{Lemma}

\numberwithin{equation}{subsection}
\numberwithin{thm}{subsection}

\theoremstyle{definition}
\newtheorem{defn}[thm]{Definition}

\theoremstyle{remark}

\newtheorem{rmk}[thm]{Remark}

\DeclareMathAlphabet{\mathpzc}{OT1}{pzc}{m}{it}
\newcommand{\hh}{\mathpzc{h}}

\newcommand{\C}{\mathbb{C}}

\newcommand{\R}{\mathbb{R}}
\newcommand{\Z}{\mathbb{Z}}

\newcommand{\N}{\mathbb{N}}

\newcommand{\bdry}{\partial}
\newcommand{\s}{\vskip.1in}
\newcommand{\n}{\noindent}

\newcommand{\F}{\mathbb{F}}

\newcommand{\be}{\begin{enumerate}}
\newcommand{\ee}{\end{enumerate}}
\newcommand{\op}{\operatorname}
\newcommand{\bs}{\boldsymbol}

\begin{document}

\title[HF=ECH via open book decompositions]
{The equivalence of Heegaard Floer homology and embedded contact homology III: from hat to plus}

\author{Vincent Colin}
\address{Universit\'e de Nantes, 44322 Nantes, France}
\email{vincent.colin@univ-nantes.fr}

\author{Paolo Ghiggini}
\address{Universit\'e de Nantes, 44322 Nantes, France}
\email{paolo.ghiggini@univ-nantes.fr}
\urladdr{http://www.math.sciences.univ-nantes.fr/\char126 Ghiggini}

\author{Ko Honda}
\address{University of Southern California, Los Angeles, CA 90089}
\email{khonda@usc.edu} \urladdr{http://www-bcf.usc.edu/\char126 khonda}

\date{This version: August 7, 2012.}

\keywords{contact structure, Reeb dynamics, embedded contact
homology, Heegaard Floer homology}

\subjclass[2000]{Primary 57M50; Secondary 53D10,53D40.}

\thanks{VC supported by the Institut Universitaire de France, ANR Symplexe, ANR Floer Power, and ERC Geodycon. PG supported by ANR Floer Power and ANR TCGD. KH supported by NSF Grants DMS-0805352 and DMS-1105432.}

\begin{abstract}
Given a closed oriented $3$-manifold $M$, we establish an isomorphism between the Heegaard Floer homology group $HF^+ (-M)$ and the embedded contact homology group $ECH(M)$.  Starting from an open book decomposition $(S,\hh)$ of $M$, we construct a chain map $\Phi^+$ from a Heegaard Floer chain complex associated to $(S,\hh)$ to an embedded contact homology chain complex for a contact form supported by $(S,\hh)$. The chain map $\Phi^+$ commutes up to homotopy with the $U$-maps defined on both sides and reduces to the quasi-isomorphism $\Phi$ from \cite{CGH2, CGH3} on subcomplexes defining the hat versions. Algebraic considerations then imply that the map $\Phi^+$ is a quasi-isomorphism.
\end{abstract}

\maketitle

\setcounter{tocdepth}{1}
\tableofcontents

\section{Introduction} \label{section: intro}

This is the last paper in the series which proves the isomorphism between certain Heegaard Floer homology and embedded contact homology groups. References from \cite{CGH2} (resp.\ \cite{CGH3}) will be written as ``Section I.$x$'' (resp.\ ``Section II.$x$'') to mean ``Section $x$'' of \cite{CGH2} (resp.\ \cite{CGH3}), for example.

Let $M$ be a closed oriented $3$-manifold.  Let $\widehat{HF}(M)$ and $HF^+(M)$ be the hat and plus versions of Heegaard Floer homology of $M$ and let $\widehat{ECH}(M)$ and $ECH(M)$ be hat and usual versions of the embedded contact homology of $M$. As usual, embedded contact homology will be abbreviated as ECH. In \cite{CGH1}, we introduced the ECH chain group $\widehat{ECC}(N,\bdry N)$ and showed that $\widehat{ECH}(N,\bdry N)\simeq \widehat{ECH}(M)$. In the papers \cite{CGH2,CGH3}, we defined a chain map
$$\Phi: \widehat{CF}(-M)\to \widehat{ECC}(N,\bdry N),$$
which induced an isomorphism
$$\Phi_*:\widehat{HF}(-M)\stackrel\sim\to \widehat{ECH}(M).$$

The goal of this paper is to extend the above result and prove the following theorem:

\begin{thm}\label{thm: HF+=ECH}
If $M$ is a closed oriented $3$-manifold, then there is a chain map $$\Phi^+:CF^+ (-M)\stackrel\sim\to ECC(M)$$
which is a quasi-isomorphism and which commutes with the $U$-maps up to homotopy.
\end{thm}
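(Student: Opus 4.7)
The strategy naturally splits into three stages: a geometric construction of $\Phi^+$ extending the map $\Phi$ from \cite{CGH2,CGH3}, a geometric verification that $\Phi^+$ commutes with the $U$-maps up to chain homotopy, and a purely algebraic argument deducing the quasi-isomorphism statement from the already-established hat case.

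First, I would construct $\Phi^+:CF^+(-M)\to ECC(M)$ so that it restricts to $\Phi$ on the hat subcomplexes. The complex $CF^+(-M)$ is generated by pairs $[\bs x,i]$ with $i\geq 0$, whose differential decreases $i$ by the intersection number with a basepoint $z$. On the ECH side, the distinguished elliptic binding orbit $e$ of the open book $(S,\hh)$ enters $ECC(M)$ as a generator that $\widehat{ECC}(N,\bdry N)$ excludes, and its multiplicity in an ECH orbit set plays the role of $i$. The natural extension sends $[\bs x,0]$ to $\Phi(\bs x)$ and, for higher $i$, counts the same type of holomorphic curves used in the definition of $\Phi$, but now permitted to wrap the binding solid torus, with the wrapping contributing factors of $e$ whose total multiplicity matches $i$. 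The chain map property and compatibility with $\Phi$ on the hat subcomplex reduce to the analogous identities from the hat case, together with the verification that the new boundary degenerations near $e$ cancel in pairs.

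Second, and this is the principal geometric obstacle, one must check that $U\circ\Phi^+$ and $\Phi^+\circ U$ differ by the boundary $\partial K+K\partial$ of a chain homotopy $K$. On the HF side, $U$ counts holomorphic disks passing through a fixed basepoint (equivalently, it shifts $i\mapsto i-1$); on the ECH side it counts holomorphic currents in the symplectization passing through a generic point of $M$. The homotopy $K$ should arise by interpolating the two point constraints along a path and counting the parametric moduli space of curves carrying a moving marked point. The analytic work will consist of controlling compactness at the binding orbit $e$ (in particular excluding, or properly accounting for, unwanted degenerations into multiple covers of $e$ and orbit-bubbling in the binding neighborhood), achieving transversality for the parametric problem, and identifying the boundary strata precisely with the four terms of the homotopy identity; the neck-stretching and gluing machinery developed in the earlier papers of the series should adapt, but the presence of arbitrary multiplicities of $e$ requires genuinely new compactness input.

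Finally, these properties force $\Phi^+$ to be a quasi-isomorphism by a purely algebraic argument. The complex $CF^+(-M)$ is exhausted by its $U$-filtration $F_n:=\ker(U^{n+1})$, with $F_0=\widehat{CF}(-M)$ and multiplication by $U^n$ identifying the successive quotient $F_n/F_{n-1}$ with $\widehat{CF}(-M)$; the analogous filtration by multiplicities of $e$ exists on $ECC(M)$. After modifying $\Phi^+$ within its chain-homotopy class to make it strictly $U$-equivariant (using the homotopy $K$ to eliminate the obstruction inductively), or equivalently working directly with the long exact sequences in homology
\begin{equation*}
\cdots\to\widehat{HF}_*(-M)\to HF^+_*(-M)\xrightarrow{U}HF^+_*(-M)\to\widehat{HF}_{*-1}(-M)\to\cdots
\end{equation*}
and its analogue for $ECH(M)$, one obtains a compatible ladder in which $\Phi_*$ is an isomorphism at every hat position. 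An induction on $n$, applying the five lemma to the short exact sequences $0\to F_{n-1}\to F_n\xrightarrow{U^n}\widehat{CF}(-M)\to 0$, then shows that $(\Phi^+)_*$ is an isomorphism on each $F_n$; passing to the colimit and using that homology commutes with direct limits yields the theorem.
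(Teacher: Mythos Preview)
Your three-stage plan is broadly correct in spirit, but two of the stages diverge from the paper in ways that matter.

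\textbf{Construction of $\Phi^+$.} You propose to extend $\Phi$ by counting curves in the \emph{same} cobordism $W_+$, now allowed to wrap the binding with factors of the binding orbit. The paper instead builds a genuinely larger symplectic cobordism $W^+\supset W_+$ (Section~\ref{section: cobordism}) whose positive end is $[0,1]\times\Sigma$ for the full Heegaard surface $\Sigma=S_0\cup(-S_{1/2})$, not just one page. This is forced on you: generators of $CF^+(\Sigma,\bs\alpha,\bs\beta,z^f)$ are $2g$-tuples that can have components in $S_{1/2}$, and these are invisible to a cobordism fibered by $S_0$. The paper states explicitly that it does not know how to express $HF^+(-M)$ using a single page. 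The filtration level $i$ is then the intersection number with a holomorphic surface $S_{(z')^f}\subset W^+$, and one must verify separately (Theorem~\ref{thm: control}) that curves with $i=0$ from generators in $S_0$ stay inside $W_+$, so that $\Phi^+$ genuinely restricts to $\Phi$ there.

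\textbf{The reduction to the hat case.} Your Stage~3 is essentially the paper's Theorem~\ref{thm: algebraic}: the long exact sequences of the $U$-maps, together with the five lemma, reduce the quasi-isomorphism of $\Phi^+$ to a statement about the hat level. But the induced map on the hat level is \emph{not} $\Phi^+|_{\ker U}$; because $\Phi^+$ only commutes with $U$ up to a homotopy $K^+$, the map of mapping cones is
\[
\Phi_{alg}=\begin{pmatrix}\Phi^+ & 0\\ K^+ & \Phi^+\end{pmatrix}\colon C(U)\to C(U'),
\]
and it is $(\Phi_{alg})_*$, not $\Phi_*$, that appears in your ladder. Your proposal to ``modify $\Phi^+$ within its chain-homotopy class to make it strictly $U$-equivariant'' has no general mechanism behind it, and in any case would not by itself identify the resulting restriction with the known $\Phi$. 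The paper closes this gap geometrically (Theorem~\ref{thm: phi alg}): it shows that the chain homotopies $H$ (between the algebraic and geometric $U$-maps on $CF^+$, Theorem~\ref{thm: U-map}) and $K$ (Theorem~\ref{thm: K}) both \emph{vanish} on generators $[\bs y,0]$ with $\bs y\subset S_0$, so that $\Phi_{alg}$ restricted to the image of $\widehat{CF'}(S_0,{\bf a},\hh({\bf a}))$ literally equals $\Phi'$. This vanishing is not formal; it uses positivity of intersections with the holomorphic surfaces $S_{(z')^f}$ and $C_\theta$ in $W^+$, and is where the new cobordism construction earns its keep.
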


We use $\F=\Z /2\Z$ coefficients for both Heegaard Floer homology and ECH.

\begin{rmk}
The construction of $\Phi^+$ can be carried out with twisted coefficients as in Sections~I.\ref{P1-subsection: twisted coefficients phi map} and I.\ref{P1-subsection: defn of psi} of \cite{CGH2}.
\end{rmk}

Let $(S,\hh)$ be an open book decomposition for $M$, where $S$ is a genus $g\geq 2$ bordered surface with connected boundary and $\hh\in \op{Diff}(S,\bdry S)$.\footnote{The condition $g\geq 2$ is a technical condition which will used in the definition of $\Phi^+$.}  In particular we identify
$$M\simeq (S\times[0,1])/\sim,$$
where $(x,1)\sim (\hh(x),0)$ for all $x\in S$ and $(x,t)\sim (x,t')$ for all $x\in\bdry S$ and $t,t'\in[0,1]$. We write $S_t =S\times \{ t\}$ for $t\in[0,1]$. Let $\Sigma=S_0\cup -S_{1/2}$ be the Heegaard surface corresponding to $(S,\hh)$.

Given a pair $(\Sigma_0,\hh_0)$ consisting of a surface $\Sigma_0$ and $\hh_0\in \op{Diff}(\Sigma_0)$, we write the mapping torus of $(\Sigma_0,\hh_0)$ as:
$$N_{(\Sigma_0,\hh_0)}=(\Sigma_0\times[0,2])/(x,2)\sim (\hh_0(x),0).$$
The map $\Phi$, defined in Section~I.\ref{P1-subsection: defn of chain maps}, is induced by the cobordism $W_+$ which is an $S_0$-fibration and which restricts to a half-cylinder over $[0,1]\times S_0$ at the positive end and to a half-cylinder over the mapping torus $N_{(S_0,\hh)}$ at the negative end. We say that $W_+$ is a cobordism ``from $[0,1]\times S_0$\footnote{We will interchangeably write $[0,1]\times S_0$ and $S_0\times[0,1]$.  This is partly due to the fact that the open book is usually written as $(S\times[0,1])/\sim$ and the positive end of $W_+$ is a ``symplectization'' $\R\times[0,1]\times S_0$.} to $N_{(S_0,\hh)}$.''

The map $\Phi^+$ is induced by a cobordism $X_+$ from $[0,1]\times\Sigma$ to $M$ which extends $W_+$ and is described below.\footnote{The reader is warned that we are distinguishing $W_+$ from $X_+$, i.e., between lower and upper indices.} Although $\Phi$ was defined in terms of just one page $S_0$, we can no longer ignore the $S_{1/2}$ portion of $\Sigma$ when defining $\Phi^+$, since we do not know how to express $HF^+(-M)$ in terms of $S_0$.

A symplectic cobordism similar to $X_+$ is constructed by Wendl in \cite{We4}.

\subsubsection{The cobordism $X_+$} \label{subsubsection: brief sketch}

We give a description of $X_+=X_+^0\cup X_+^1\cup X_+^2$ and $W_+= W_+^0\cup W_+^1\cup W_+^2$ as topological spaces, where $W_+^i\subset X_+^i$ for $i=0,1,2$.  See Figure~\ref{figure: Wplus}. {\em The description given here is the simplified version of the actual construction, and the notation of Section~\ref{subsubsection: brief sketch} is not used outside of Section~\ref{subsubsection: brief sketch}.}

\begin{figure}[ht]
\begin{center}
\psfragscanon
\psfrag{A}{\small $S_{1/2}$}
\psfrag{B}{\small $S_0$}
\psfrag{C}{\small $B_+^0$}
\psfrag{D}{\small $D^2$}
\includegraphics[width=6cm]{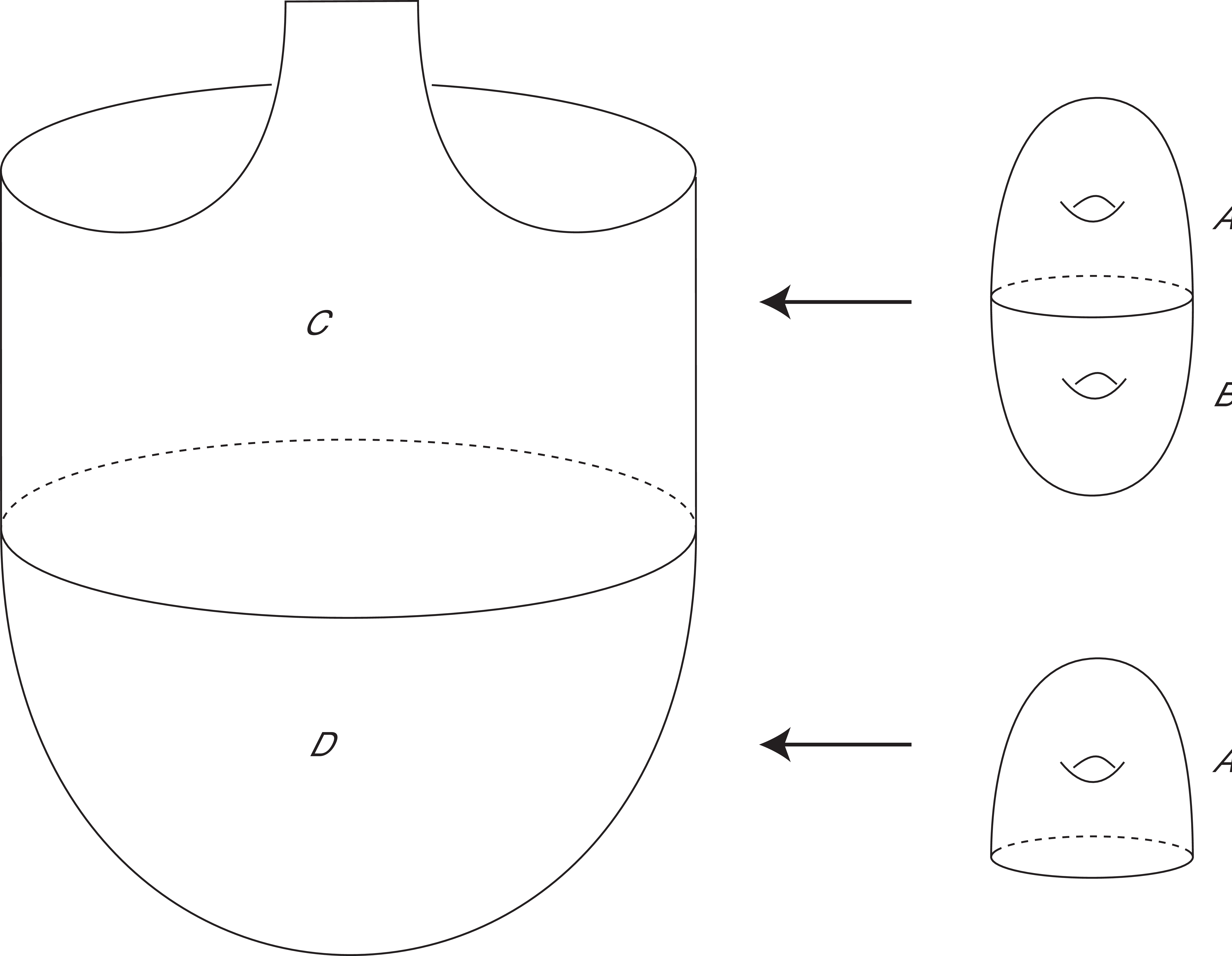}
\end{center}
\caption{Schematic diagram for $X_+^0\cup X_+^1$ which indicates the fibers over each subsurface.}
\label{figure: Wplus}
\end{figure}

First extend $\hh\in \op{Diff}(S_0,\bdry S_0)$ to $\hh^+\in \op{Diff}(\Sigma)$ so that $\hh^+|_{S_{1/2}}=id$. Let $N_{(\Sigma,\hh^+)}$ and $N_{(S_0,\hh)}$ be the mapping tori of $\hh^+$ and $\hh$ and let
$$\pi:[0,\infty)\times N_{(\Sigma,\hh^+)}\to [0,\infty)\times \R/2\Z$$ be the projection $(s,x,t)\mapsto (s,t)$. Then define $B^0_+ =([0,\infty ) \times \R /2\Z)-B_+^c$, where $B_+^c$ is the subset $[2,\infty)\times[1,2]$ with the corners rounded. We then set
$$X_+^0:= \pi^{-1}(B_+^0),\quad W_+^0:= \pi^{-1}(B_+^0)\cap ([0,\infty)\times N_{(S_0,\hh)}).$$

Next we set
$$X_+^1: = S_{1/2}\times D^2,\quad W_+^1:=\varnothing$$ and identify $\{0\}\times S_{1/2} \times \R/2\Z \subset \bdry X_+^0$ with $S_{1/2}\times \bdry D^2\subset \bdry X_+^1$ via the map $(0,x,t)\mapsto (x,e^{\pi it})$. Then one component of $\bdry (X_+^0\cup X_+^1)$ is given by $M=(\{0\}\times N_{(S_0,\hh)})\cup (\bdry S_0\times D^2)$.

Finally we set
$$X_+^2:=(-\infty,0]\times M,\quad W_+^2:= (-\infty,0]\times (\{0\}\times N_{(S_0,\hh)}),$$ where $\{0\}\times M$ is identified with $M$.

\subsubsection{Sketch of proof}
The proof of Theorem~\ref{thm: HF+=ECH} proceeds as follows:

\s\n
{\em Step 1.} Express the $U$-map on $HF^+ (-M)$ as a count of $I_{HF} =2$ curves that pass through a point, in analogy with the definition of $U$ in ECH. This is given by Theorem~\ref{thm: U-map}.

\s\n
{\em Step 2.} Construct a symplectic cobordism $(X_+ ,\Omega_{X_+} )$ from $[0,1] \times \Sigma$ to $M$, together with stable Hamiltonian and contact structures on $[0,1]\times \Sigma$ and $M$. This is the goal of Section~\ref{section: cobordism}.

\s\n
{\em Step 3.} Define the chain map $\Phi^+$ as a count of $I_{X_+} =0$ curves in $X_+$ and show that $\Phi^+$ commutes with the $U$-maps on both sides up to a chain homotopy $K$. This is done in Section~\ref{section: Phi^+}.

\s\n
{\em Step 4.} By an algebraic theorem (Theorem~\ref{thm: algebraic}), $\Phi^+$ is a quasi-isomorphism if a map
$$\Phi_{alg} :\widehat{CF} (-M)\to\widehat{ECC} (M),$$
defined using $\Phi^+$ and $K$, is a quasi-isomorphism.

\s\n
{\em Step 5.} By Theorem~\ref{thm: phi alg}, the map $\Phi_{alg}$ is a quasi-isomorphism. This is proved by relating $\Phi_{alg}$ to the quasi-isomorphism $\Phi$ from \cite{CGH2, CGH3}.

\section{Heegaard Floer chain complexes} \label{section: HF side}

The goal of this section is to introduce some notation and recall the definition of the chain complex $CF^+ (\Sigma,\bs\alpha,\bs\beta,z^f,J)$, whose homology is $HF^+(-M)$.

\subsection{Heegaard data}

Let $M$ be a closed oriented $3$-manifold and let $(S,\hh)$ be an open book decomposition for $M$.

We use the following notation, which is similar to that of Section~I.\ref{P1-subsubsection: Heegaard diagram compatible with S h}:
\begin{itemize}
\item $\Sigma =S_0\cup -S_{1/2}$ is the associated genus $2g$ Heegaard surface of $M$;
\item ${\bf a}=\{a_1 ,...,a_{2g}\}$ is a basis of arcs for $S$ and ${\bf b}$ is a small pushoff of ${\bf a}$ as given in Figure~I.\ref{P1-darkside};
\item $x_i$ and $x_i'$ are the endpoints of $a_i$ in $\partial S_0$ that correspond to the coordinates of the contact class and $x''_i$ is the unique point of $a_i \cap b_i \cap int (S_{1/2} )$;
\item $\bs{\alpha} = ({ \bf a} \times \{{1\over 2} \}) \cup ({\bf a} \times \{ 0\})$ and $\bs\beta = ({\bf b} \times \{{1\over 2}\}) \cup (\hh( {\bf a}) \times \{ 0\})$ are the collections of compressing curves on the Heegaard surface $\Sigma$;
\item $z^f$ is a point in the large (i.e., non-thin-strip) component of $S_{1/2} -\bs\alpha -\bs\beta$ and $(z')^f$ is a point which is close but not equal to $z^f$.
\end{itemize}
We say that the pointed Heegaard diagram $(\Sigma,\bs\alpha,\bs\beta,z^f)$ is {\em compatible with $(S,\hh)$.}

\begin{rmk}
{\em The orientation for $\Sigma$ is opposite to that of Section~I.\ref{P1-subsubsection: Heegaard diagram compatible with S h}.} This is done so that the triple $(S,{\bf a}, \hh({\bf a}))$, used in \cite{CGH2,CGH3}, embeds in $(\Sigma,\bs\alpha,\bs\beta)$ in an orientation-preserving manner.
\end{rmk}

\subsection{Symplectic data} \label{subsection: symplectic data}

The stable Hamiltonian structure on $[0,1] \times \Sigma$ with coordinates $(t,x)$ is given by $(\lambda,\omega)$, where $\lambda=dt$ and $\omega$ is an area form on $\Sigma$ which makes $(\bs\alpha,\bs\beta,z^f)$ {\em weakly admissible with respect to $\omega$}, i.e., each periodic domain has zero $\omega$-area. The plane field $\xi =\ker \lambda$ is equal to the tangent plane field of $\{ t\} \times \Sigma$ and the Hamiltonian vector field is $R=\frac{\partial}{\partial t}$.

We introduce the ``symplectization'' $$(X,\Omega)=(\R \times [0,1] \times \Sigma,ds\wedge dt +\omega),$$ where $(s,t)$ are coordinates on $\R\times[0,1]$. Let $\pi_B :X \rightarrow B=\R \times [0,1]$ be the projection along the fibers $\{(s,t)\}\times\Sigma$.

Let $J$ be an $\Omega_X$-admissible almost complex structure on $X$; we assume that $J$ is regular (cf.\ Lemma~I.\ref{P1-lemma: HF regularity} and \cite[Proposition~3.8]{Li}). We also define the Lagrangian submanifolds
$$L_{\bs\alpha}=\R\times\{1\}\times\bs\alpha,\quad L_{\bs\beta}=\R\times\{0\}\times\bs\beta.$$

\subsection{The chain complex $CF^+(\Sigma,\bs\alpha,\bs\beta,z^f,J)$}

In this subsection we recall the definition of the chain complex $CF^+(\Sigma,\bs\alpha,\bs\beta,z^f,J)$, whose homology group $$HF^+(\Sigma,\bs\alpha,\bs\beta,z^f,J)$$ is isomorphic to $HF^+(-M)$. This definition is due to Lipshitz~\cite{Li}, with one modification: we are using the ECH index $I_{HF}$ from Definition~I.\ref{P1-defn: ECH-type index for HF}. We will often suppress $J$ from the notation.

Let $\mathcal{S}=\mathcal{S}_{\bs\alpha,\bs\beta}$ be the set of $2g$-tuples ${\bf y} =\{y_1 ,\dots , y_{2g}\}$ of intersection points of $\bs\alpha$ and $\bs\beta$ for which there exists some permutation $\sigma \in \mathfrak{S}_{2g}$ such that $y_j \in \alpha_j \cap \beta_{\sigma (j)}$ for all $j$. Then $CF^+ (\Sigma,\bs\alpha,\bs\beta,z^f,J)$ is generated over $\F$ by pairs $[{\bf y} ,i]$, where ${\bf y}\in \mathcal{S}$ and $i\in \N$.

The differential $\bdry=\bdry_{HF}$ is given by
$$\bdry [{\bf y},i] = \sum_{[{\bf y}',j] \in {\mathcal S}\times \N} \langle \bdry [\mathbf{y},i],[\mathbf{y}',j]\rangle \cdot [{\bf y}',j],$$
where the coefficient $\langle \partial [{\bf y} ,i],[{\bf y}',j]\rangle$ is the count of index $I_{HF}=1$ finite energy holomorphic multisections in $(X,J)$ with Lagrangian boundary $L_{\bs\alpha}\cup L_{\bs\beta}$ from ${\bf y}$ to ${\bf y}'$, whose algebraic intersection with the holomorphic strip $\R \times [0,1] \times \{ (z')^f \}$ is $(i-j)$. We will often refer to such curves as curves {\em from $[{\bf y},i]$ to $[{\bf y}',j]$}.

Let us write $\bdry=\sum_{k=0}^\infty \bdry_k$, where $\bdry_k$ only counts curves whose algebraic intersection with $\R \times [0,1] \times \{(z')^f \}$ is $k$.

\section{The geometric $U$-map} \label{section: geometric U map}

\subsection{Introduction}

In \cite{OSz1,Li}, the $U$-map
$$U : CF^+ (\Sigma,\bs\alpha,\bs\beta,z^f)\rightarrow CF^+ (\Sigma,\bs\alpha,\bs\beta,z^f),$$
is defined algebraically as $U([{\bf y},i])=[{\bf y}, i-1]$. The goal of this section is to give a geometric definition of the $U$-map which is analogous to that of ECH.

Let $z^f, (z')^f$ be as before and let $z = (z^b,z^f )\in X=B \times \Sigma$, where $z^b\in int(B)$. Let $J^\Diamond$ be a generic $C^l$-small perturbation of $J$ such that $J^\Diamond=J$ away from a small neighborhood $N(z)\subset X$ of $z$ and such that $N(z)\cap (\R \times [0,1] \times \{(z')^f \})=\varnothing$. In particular, we assume that there are no $J^\Diamond$-holomorphic curves that are homologous to $\{pt\}\times \Sigma$ and pass through $z$.

\begin{rmk}
When we refer to ``$C^l$-close'' almost complex structures, etc., we assume that $l>0$ is sufficiently large.
\end{rmk}

\begin{defn} [Geometric $U$-map]
The {\em geometric $U$-map with respect to the point $z$} is the map:
$$U_z ([{\bf y},i]) = \sum_{[{\bf y'},j] \in {\mathcal S}\times \N} \langle U_z ([\mathbf{y},i]),[\mathbf{y'},j]\rangle \cdot [{\bf y'},j],$$
where the coefficient $\langle U_z ([{\bf y} ,i]), [{\bf y'},j] \rangle$ is the count of index $I_{HF} = 2$ finite energy holomorphic curves in $(X,J^\Diamond)$ with Lagrangian boundary $L_{\bs\alpha}\cup L_{\bs\beta}$ from $[{\bf y},i]$ to $[{\bf y}',j]$ that pass through $z$.
\end{defn}

By our choice of $J^\Diamond$, ``passing through $z$'' is a generic codimension $2$ condition, i.e., if $u$ is a simple curve from $[{\bf y},i]$ to $[{\bf y}',j]$ that passes through $z$, then $I(u)\geq 2$.  Moreover, by a simple ECH index count, an $I(u)\leq 3$ curve that passes through $z$ cannot have a fiber component.

\begin{prop} \label{prop: U chain map}
$U_z$ is a chain map.
\end{prop}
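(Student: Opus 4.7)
The plan is to identify $\partial U_z + U_z\partial$ with the signed boundary count of a one-dimensional moduli space of $I_{HF}=3$ curves through $z$, which must vanish mod $2$. Concretely, fix generators $[\mathbf{y},i]$ and $[\mathbf{y}',j]$ and let $\mathcal{M}_3(\mathbf{y},\mathbf{y}';z)$ denote the moduli space of finite energy $J^\Diamond$-holomorphic multisections in $(W,J^\Diamond)$ with Lagrangian boundary on $L_{\bs\alpha}\cup L_{\bs\beta}$, asymptotic to $\mathbf{y}$ and $\mathbf{y}'$ at the two ends, with $I_{HF}=3$, passing through $z$, and with algebraic intersection $i-j$ with $\R\times[0,1]\times\{(z')^f\}$. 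The ECH/Lipshitz-style Fredholm theory together with genericity of $J^\Diamond$ (applied in particular to the point constraint at $z$) gives that $\mathcal{M}_3(\mathbf{y},\mathbf{y}';z)$ is a smooth $1$-manifold. Note that we are not quotienting by $\R$-translation, since the fixed $s$-coordinate of $z$ pins the curve.

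The next step is SFT/Gromov compactness applied to $\mathcal{M}_3(\mathbf{y},\mathbf{y}';z)$. Any end of this moduli space corresponds to a splitting in the $s$-direction into two nontrivial levels $u_+\cup u_-$, where $u_+$ runs from $\mathbf{y}$ to some intermediate $\mathbf{y}''$ and $u_-$ from $\mathbf{y}''$ to $\mathbf{y}'$, with ECH indices summing (by index additivity) to $3$ and each strictly positive, so $(I_{HF}(u_+),I_{HF}(u_-))\in\{(1,2),(2,1)\}$. The constraint point $z$ lies in precisely one level. The $(1,2)$-breakings, with $z$ in the lower level, contribute to $\partial\circ U_z$; the $(2,1)$-breakings, with $z$ in the upper level, contribute to $U_z\circ\partial$. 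Counting the ends of $\mathcal{M}_3(\mathbf{y},\mathbf{y}';z)$ modulo $2$ therefore yields $\partial U_z + U_z\partial = 0$ on each generator $[\mathbf{y},i]$.

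The main obstacle is ruling out the remaining possible degenerations in the compactness argument, so that the list of ends is exactly the one above. These are: (a) sphere bubbles, which must lie inside a fiber $\{pt\}\times\Sigma$ since the projection $\pi_B$ is $(J^\Diamond,j_B)$-holomorphic and $B$ is contractible; any such bubble through $z$ is excluded by the choice of $J^\Diamond$, and any fiber component off $z$ together with the main curve would violate the $I_{HF}\leq 3$ and area bounds; (b) disk bubbles on $L_{\bs\alpha}\cup L_{\bs\beta}$, excluded by the weak admissibility of $(\bs\alpha,\bs\beta,z^f)$ with respect to $\omega$ and the resulting area control in Lipshitz's cylindrical setup; (c) fiber-type components in the limit multisection, which for simple curves of index $\leq 3$ passing through $z$ are ruled out by the index count stated just before the proposition; and (d) branched covers of trivial strips, controlled by the standard ECH/Hutchings index inequality. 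One also has to verify that the algebraic intersection number with the auxiliary strip $\R\times[0,1]\times\{(z')^f\}$ is additive under the breakings, so that the index $i-j$ splits correctly among the two levels; this follows because $(z')^f\neq z^f$ and $N(z)\cap(\R\times[0,1]\times\{(z')^f\})=\varnothing$, so the intersection number behaves just as in the usual Heegaard Floer chain map argument. Once (a)--(d) and the additivity of the $(z')^f$-intersection are in hand, the boundary-of-boundary principle gives $\partial U_z + U_z\partial=0$ over $\F=\Z/2\Z$.
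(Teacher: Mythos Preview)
Your argument is correct and is precisely the ``standard argument in symplectic geometry using analytical results from \cite{Li}'' that the paper invokes in one sentence; you have simply spelled it out. Two minor remarks: first, your attribution of the $(1,2)$ and $(2,1)$ breakings to $\partial\circ U_z$ and $U_z\circ\partial$ is swapped (the index-$1$ upper level followed by the index-$2$ lower level through $z$ computes $U_z\circ\partial$, not $\partial\circ U_z$), though over $\F$ this is immaterial; second, sphere bubbles are excluded more directly by $\pi_2(W)=\pi_2(\Sigma)=0$ (since $g(\Sigma)\geq 1$), without needing $\pi_B$ to be $(J^\Diamond,j_B)$-holomorphic, which is not guaranteed after the generic perturbation near $z$.
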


\begin{proof}
This follows from standard arguments in symplectic geometry by using analytical
results proved in \cite{Li}.
\end{proof}

\begin{thm}\label{thm: U-map}
There exists a chain homotopy
$$H:  CF^+ (\Sigma,\bs\alpha,\bs\beta,z^f)\rightarrow CF^+ (\Sigma,\bs\alpha,\bs\beta,z^f)$$
such that
\begin{equation} \label{eqn: Us}
U_z -U = H\circ \partial_{HF} + \partial_{HF} \circ H.
\end{equation}
Moreover, for all ${\bf y} \in \mathcal{S}$, one has $H([{\bf y} ,0])=0$.
\end{thm}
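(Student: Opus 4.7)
The plan is to construct $H$ as the count of a parametric moduli space that interpolates between the data defining $U_z$ and a configuration whose curve count realizes the algebraic shift $U$. Fix a smooth path $\{z_r\}_{r\in[0,1]}\subset W$ with $z_0=z$, together with a generic family $\{J^\Diamond_r\}_{r\in[0,1]}$ of admissible almost complex structures agreeing with $J$ outside small neighborhoods of $z_r$ (disjoint from $\R\times[0,1]\times\{(z')^f\}$), chosen so that the count of $I_{HF}=2$ $J^\Diamond_1$-holomorphic curves through $z_1$ reproduces the algebraic shift $U([\mathbf{y},i])=[\mathbf{y},i-1]$. A natural candidate is to take $z_1^b$ approaching one of the ends of $B=\R\times[0,1]$: an SFT-style limit decomposes an $I_{HF}=2$ curve through $z_1$ into a top-level component carrying the point and a rigid bottom-level component that realizes $i\mapsto i-1$ by having algebraic intersection $1$ with the $(z')^f$-strip.

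With the above interpolation in hand, define
$$\langle H([\mathbf{y},i]),[\mathbf{y}',j]\rangle:=\#\bigl\{(r,u)\ :\ r\in[0,1],\ u\text{ a }J^\Diamond_r\text{-holomorphic }I_{HF}=1\text{ curve from }[\mathbf{y},i]\text{ to }[\mathbf{y}',j]\text{ through }z_r\bigr\}.$$
The moduli space of $I_{HF}=1$ curves (without the $\R$-quotient) is $1$-dimensional, so adding the $1$-dimensional parameter $r$ and imposing the codimension-$2$ point constraint yields a generically transverse, $0$-dimensional parametric moduli. The chain-homotopy identity $U_z-U=\bdry_{HF}H+H\bdry_{HF}$ then arises from the boundary analysis of the $1$-dimensional parametric moduli space of pairs $(r,u)$ with $u$ of $I_{HF}=2$ through $z_r$: its endpoints $r=0$ and $r=1$ contribute $U_z$ and $U$ respectively, while interior $r\in(0,1)$ SFT-breakings into two levels (one rigid $I_{HF}=1$ curve and one $I_{HF}=1$ curve through $z_r$) contribute exactly $\bdry_{HF}H+H\bdry_{HF}$. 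Fiber-component bubbling is excluded by the standing hypothesis that no curve homologous to $\{pt\}\times\Sigma$ passes through $z_r$, and spherical/disk bubbling is ruled out by a standard ECH/Lipshitz-index computation (cf.\ \cite{Li}).

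The main obstacle is the identification of the $r=1$ endpoint count with the algebraic $U$: this amounts to an SFT/neck-stretching analysis near the chosen end of $B$, identifying the top-level piece through the limiting point with a combinatorial factor of $1$ and the bottom-level piece with a rigid shift, in parallel with the Ozsv\'ath--Szab\'o--Lipshitz comparison of geometric and algebraic $U$-maps. As for the \emph{moreover} clause $H([\mathbf{y},0])=0$: positivity of intersection with the trivial $J$-holomorphic strip $\R\times[0,1]\times\{(z')^f\}$ forces any contributing $I_{HF}=1$ curve from $[\mathbf{y},0]$ to $[\mathbf{y}',j]$ to satisfy $-j=i-j\geq0$, hence $j=0$ and zero algebraic intersection; the locus in $W$ swept by such $I_{HF}=1$ curves from $[\mathbf{y},0]$ to $[\mathbf{y}',0]$ is $3$-dimensional (codimension $1$), so one chooses the path $\{z_r\}$ generically to avoid this locus entirely, emptying the defining moduli space of $H([\mathbf{y},0])$.
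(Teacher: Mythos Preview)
Your skeleton is right: $H$ is indeed defined by a parametric count of $I_{HF}=1$ curves through a moving marked point, and the chain-homotopy identity comes from the ends of the associated $1$-dimensional parametric moduli space. But the identification of the $\tau=1$ end with the algebraic $U$ is the entire content of the theorem, and here your proposal has two genuine problems.

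First, your picture of the limit is wrong. Pushing $z_\tau^b$ toward $s=\pm\infty$ in the symplectization does nothing, since the almost complex structure is $s$-invariant there and translation absorbs the motion. The paper instead sends $z_\tau^b$ to a point $(0,0)$ on the $\bs\beta$-boundary of $B$, so that the limit configuration is not an SFT two-level building but a \emph{disk bubble}: the base $B$ degenerates to $B\cup_{w^b} D$, the total space to $W\cup_{w^b}(D\times\Sigma)$, and a limiting curve splits as $u_B\cup u_D$ with $u_B\subset W$ and $u_D\subset D\times\Sigma$ carrying the point constraint. An ECH-index computation then forces $u_B$ to be $2g$ trivial strips and $u_D$ to have $\langle u_D, D\times\{(z')^f\}\rangle=1$, so the intersection shift $i\mapsto i-1$ and the point constraint both live on the bubble, not on separate levels as you describe.

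Second, and more seriously, the ``combinatorial factor of $1$'' you invoke is not combinatorial at all: it is the count
\[
\#\mathcal{M}_A\bigl(D\times\Sigma,\ J^\Diamond;\ z,\ \mathbf{w}\bigr)\equiv 1\pmod 2,
\]
a relative Gromov--Taubes invariant in the class $A=[\{pt\}\times\Sigma]+2g[D\times\{pt\}]$ with boundary on $\partial D\times\bs\beta$, subject to $z$ and $2g$ boundary marked points $\mathbf{w}$. The paper proves this by (i) an index argument excluding reducible configurations, (ii) degenerating the fiber $\Sigma$ into a chain of tori to reduce to the $k=1$ case, (iii) doubling via an anti-holomorphic involution to pass to closed curves in $S^2\times\Sigma$, and (iv) a further base degeneration together with the Ionel--Parker symplectic sum formula to reduce to a Gromov--Witten count already computed in McDuff--Salamon. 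None of this is visible in your proposal, and there is no shortcut via ``Ozsv\'ath--Szab\'o--Lipshitz comparison'' that delivers it.

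On the moreover clause: your genericity argument is plausible but unnecessary and less robust than the paper's. Since $z_\tau$ has fixed fiber coordinate $z^f$ and $\R\times[0,1]\times\{z^f\}$ is $J$-holomorphic and homologous to $\R\times[0,1]\times\{(z')^f\}$, positivity of intersection forces any curve through $z_\tau$ to have strictly positive intersection with the $(z')^f$-strip, so the $H([\mathbf{y},0])$ moduli space is empty \emph{automatically}, with no need to perturb the path.
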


The rest of this section is devoted to the proof of Theorem~\ref{thm: U-map}.

\subsection{A model calculation}

Let $\Sigma$ be a closed surface of genus $k$. We consider the manifold $D\times \Sigma$, where $D=\{|z|\leq 1\}\subset \C$. Let $\pi_D: D \times \Sigma\to D$ and $\pi_{\Sigma}:D \times \Sigma\to \Sigma$ be the projections of $D \times \Sigma$ onto the first and second factors. Let $\bs\beta=\{\beta_1,\dots,\beta_k\}$ be the set of $\beta$-curves for $\Sigma$. Choose $z^f\in \Sigma- \bs\beta$ and let $z=(0,z^f)\in D \times \Sigma$.

Let $J=j_D \times j_{\Sigma}$ be a product complex structure on $D \times \Sigma$ and $J^\Diamond$ be a generic $C^l$-small perturbation of $J$ such that $J^\Diamond=J$ away from a small neighborhood of $z$. The key feature of $J^\Diamond$ is that all the $J^\Diamond$-holomorphic curves that pass through $z$ are regular.

We then define the moduli space $\mathcal{M}_{A}(D \times \Sigma,J^*)$, $*=\varnothing$ or $\Diamond$, of stable maps
$$u:(F,j)\to (D \times \Sigma,J^*)$$ in the class $A=[\{pt\}\times \Sigma]+k[D\times\{pt\}]\in H_2(D \times \Sigma, \partial D\times \bs\beta )$, such that $\bdry F$ has $k$ connected components and each component of $\bdry F$ maps to a distinct Lagrangian $\bdry D\times \beta_i$, $i=1,\dots,k$.  We choose points $w_i\in \beta_i$, $i=1,\dots,k$, and define
$${\bf w}=\{(1,w_1),\dots,(1,w_k)\}\subset  D \times \Sigma,$$
Then let
$$\mathcal{M}_{A}(D \times \Sigma, J^*;z,{\bf w})\subset \mathcal{M}_{A}(D \times \Sigma,J^*)$$
be the subset of curves that pass through $z$ and ${\bf w}$.  We use the modifier ``irr'' to denote the subset of irreducible curves.

\subsubsection{ECH index}

We briefly indicate the definition of the ECH index $I$ of a homology class $B\in H_2(D \times \Sigma,\bdry D \times\bs\beta)$ which admits a representative $F$ such that each component of $\bdry F$ maps to a distinct $\bdry D\times \beta_i$. Although we call $I$ the ``ECH index'', what we are defining here is a relative version of Taubes' index from \cite{T4}.

Let $\tau$ be a trivialization of $T\Sigma$ along $\bs\beta$, given by a nonsingular tangent vector field $Y_1$ along $\bs\beta$, and let $\tau'$ be a trivialization of $TD$ along $\bdry D$, given by an outward-pointing radial vector field $Y_2$ along $\bdry D$. Let $Q_{(\tau,\tau')}(B)$ be the intersection number between an embedded representative $u$ of $B$ and its pushoff, where the boundary of $u$ is pushed off in the direction given by $J(Y_1)$.

\begin{defn}
The {\em ECH index of the homology class $B$} is:
$$I(B)=c_1(T(D \times \Sigma)|_B,(\tau,\tau'))+ \mu_{(\tau,\tau')}(\bdry B)+Q_{(\tau,\tau')}(B).$$
\end{defn}

The following is the relative version of the adjunction inequality:

\begin{lemma}[Index inequality] \label{index ineq}
Let $u: (F,j)\to (D \times \Sigma,J^*)$ be a holomorphic curve in the class $B\in H_2(D \times \Sigma,\bdry D\times\bs\beta)$. Then
$$\op{ind}(u)+2\delta(u)= I(B),$$
where $\delta(u)\geq 0$ is an integer count of the singularities.
\end{lemma}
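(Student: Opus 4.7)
The identity is a relative (Lagrangian boundary) analogue of Taubes' adjunction formula in ECH. My plan is to combine a Riemann--Roch computation of $\op{ind}(u)$ with the singular adjunction formula for $Q_{(\tau,\tau')}(B)$ and check that the trivialization data cancels.

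\textbf{Step 1: Fredholm index via Riemann--Roch.} For the linearized Cauchy--Riemann operator on $u^*TM$ with totally real boundary conditions on $u^*T(\bdry D\times \beta_i)$, the index formula for curves with Lagrangian boundary gives
$$\op{ind}(u)=-\chi(F)+2c_1(u^*TM,(\tau,\tau'))+\mu_{(\tau,\tau')}(\bdry u),$$
where the $-\chi(F)$ reflects quotienting by the automorphisms of $(F,j)$ (combined with the dimension of Teichm\"uller space), and the Maslov contribution is computed using the trivializations $\tau'$ of $TD|_{\bdry D}$ and $\tau$ of $T\Sigma|_{\bs\beta}$, which together trivialize $TM$ along $\bdry D\times\bs\beta$.

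\textbf{Step 2: Relative adjunction.} Next I apply the relative version of Taubes' adjunction formula to a (possibly singular) $J^*$-holomorphic curve $u$ representing $B$. Using the trivializations $(\tau,\tau')$ to compute the self-intersection of a generic pushoff, and the local regularity results of Micallef--White to identify the singularities as isolated nodes/cusps contributing positively, one obtains
$$Q_{(\tau,\tau')}(B)=-\chi(F)+c_1(u^*TM,(\tau,\tau'))+2\delta(u),$$
with $\delta(u)\in\Z_{\geq 0}$. The proof is local at each singular point of $u$ and at each boundary point (where the Lagrangian condition rules out non-transverse self-intersection modulo $\tau$), as in the closed case but tracked relative to $(\tau,\tau')$.

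\textbf{Step 3: Combine.} Subtracting Step 1 from the definition of $I(B)$ and substituting Step 2:
\begin{align*}
I(B)-\op{ind}(u) &= \bigl(c_1+\mu+Q\bigr)-\bigl(-\chi(F)+2c_1+\mu\bigr)\\
&= \chi(F)-c_1(u^*TM,(\tau,\tau'))+Q_{(\tau,\tau')}(B)\\
&= \chi(F)-c_1+\bigl(-\chi(F)+c_1+2\delta(u)\bigr) = 2\delta(u),
\end{align*}
which rearranges to $\op{ind}(u)+2\delta(u)=I(B)$.

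\textbf{Main obstacle.} The delicate point is the bookkeeping in Step 2: the Lagrangians $\bdry D\times\beta_i$ are tori sitting in $\bdry M$, and one must confirm that the trivialization $(\tau,\tau')$ extends coherently across $\bdry D\times\bs\beta$ and that the boundary contributions to $Q$ and to the Maslov index are computed relative to the \emph{same} framing, so that the cancellation in Step 3 is genuine. Once the framing conventions are aligned, the adjunction computation reduces locally to the standard Micallef--White analysis at interior singularities; the boundary is automatically embedded because each component of $\bdry F$ maps to a distinct Lagrangian $\bdry D\times\beta_i$, so no boundary double-point corrections are needed.
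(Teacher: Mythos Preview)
Your proposal is correct and follows exactly the standard route: Riemann--Roch for the Fredholm index, the relative adjunction formula (via Micallef--White) for $Q_{(\tau,\tau')}$, and then the algebraic cancellation. The paper's own proof is simply a pointer to Theorem~I.\ref{P1-thm: index inequality for HF} in \cite{CGH2}, where precisely this argument is carried out in the Heegaard Floer setting; you have reconstructed the content of that reference, including the correct identification of the main bookkeeping issue (consistency of the framing $(\tau,\tau')$ across the Maslov and self-intersection terms).
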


\begin{proof}
Similar to the proof of Theorem~I.\ref{P1-thm: index inequality for HF}.
\end{proof}

We now calculate some ECH and Fredholm indices:

\begin{lemma} \label{lemma: ECH index of A}
If $B=[\{pt\}\times \Sigma]+k_0[D\times\{pt\}]$ with $k_0\leq k$, then
$$I(B)= 2-2k+3k_0.$$
\end{lemma}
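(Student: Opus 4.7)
My plan is to compute each of the three terms in the definition of $I(B)$ directly, using a convenient piecewise-smooth representative of $B$. Fix an interior point $p\in D\setminus\partial D$ and $k_0$ distinct points $w_i\in\beta_i$ (possible because $k_0\leq k$), and let
\[
F = (\{p\}\times\Sigma) \cup \bigcup_{i=1}^{k_0}(D\times\{w_i\}),
\]
which represents $B$ and whose only non-embedded points are the $k_0$ transverse intersections at $(p,w_i)$.

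For the Chern class term I would use the product splitting $TM = \pi_D^*TD \oplus \pi_\Sigma^*T\Sigma$, so that $c_1$ splits additively on each component of $F$. The closed surface component contributes $\chi(\Sigma) = 2-2k$ from the $T\Sigma$-factor and $0$ from the $TD$-factor. Each disk $D\times\{w_i\}$ contributes $0$ from the $T\Sigma$-factor (trivial bundle with constant boundary trivialization $X_1(w_i)$) and $+1$ from the $TD$-factor, since the outward radial field $X_2$ extends to $V(x,y)=(x,y)$ on $D$ with a single source at the origin. Hence $c_1(TM|_B,(\tau,\tau')) = 2-2k+k_0$.

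For the Maslov term, only the disk boundaries contribute. Along $\partial D\times\{w_i\}$, the tangent Lagrangian plane to $\partial D\times\beta_i$ is, in the trivialization $(\tau,\tau')$, the constant plane $\R\cdot X_1(w_i) \oplus \R\cdot J(X_2)$, since $\partial/\partial\theta = J(X_2)$ is always $i$ times the $\tau'$-basis and $X_1(w_i)$ is fixed. A constant loop of Lagrangians has Maslov index $0$, so $\mu_{(\tau,\tau')}(\partial B) = 0$. For $Q$, I would invoke the standard bilinearity
\[
Q(B) = Q\bigl([\{p\}\times\Sigma]\bigr) + Q\bigl(k_0[D\times\{pt\}]\bigr) + 2k_0\,[\{p\}\times\Sigma]\cdot[D\times\{pt\}];
\]
alternatively, one can resolve the $k_0$ singular points of $F$ by tubing and compute directly. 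The self-intersection $Q([\{p\}\times\Sigma])$ vanishes because a parallel pushoff $\{p'\}\times\Sigma$ is disjoint, and $Q(k_0[D\times\{pt\}])$ vanishes because pushing each disk to $D\times\{w_i'\}$, with $w_i'$ obtained by a small displacement in the direction $J(X_1(w_i))$, keeps them pairwise disjoint. The cross-intersection $[\{p\}\times\Sigma]\cdot[D\times\{w_i\}] = +1$ at $(p,w_i)$, summed over $i$, gives $Q = 2k_0$. Summing yields $I(B) = (2-2k+k_0) + 0 + 2k_0 = 2-2k+3k_0$, as claimed.

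The main point requiring care is the sign accounting: that the outward radial extension of $X_2$ contributes $+1$ (not $-1$) to $c_1(TD,\tau')$, that the boundary Lagrangian really is constant in the trivialization $(\tau,\tau')$ regardless of whether $X_1$ is chosen tangent or transverse to $\bs\beta$, and that the cross-intersections at $(p,w_i)$ are positive under the product complex/symplectic orientation of $D\times\Sigma$. Each of these follows from the standard ECH conventions and the definitions of $c_1$, $\mu$, and $Q$ recalled earlier in this section.
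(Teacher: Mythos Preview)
Your proof is correct and follows essentially the same approach as the paper: both decompose $B$ into the fiber class $[\{pt\}\times\Sigma]$ and $k_0$ disk classes $[D\times\{pt\}]$ and use the quadratic behavior of the ECH index, with the cross term $2k_0\langle[\{pt\}\times\Sigma],[D\times\{pt\}]\rangle=2k_0$. The paper simply packages your $c_1$, $\mu$, $Q$ computations into the summary values $I([\{pt\}\times\Sigma])=2-2k$ and $I([D\times\{pt\}])=1$, whereas you spell out each ingredient explicitly.
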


\begin{proof}
We compute that
\begin{align*}
I (B) & = I ([\{ pt\} \times \Sigma]+k_0[D \times \{ pt\}])\\
&= I ([\{ pt\} \times \Sigma ] )+ k_0\cdot I ([D \times \{ pt\}])+2k_0\cdot \langle[\{ pt\} \times \Sigma ],[D \times \{ pt\}]\rangle\\
&= (2-2k) + k_0\cdot 1 +2k_0  = 2-2k+3k_0.
\end{align*}
Here $\langle,\rangle$ denotes the algebraic intersection number.
\end{proof}

\begin{lemma}\label{lemma: Fredholm index of A}
If $B=[\{pt\}\times \Sigma]+k_0[D\times\{pt\}]$ with $k_0\leq k$ and $u$ is an irreducible $J^\Diamond$-holomorphic curve in the class $B$, then
$$\op{ind}(u)= 2-2k+3k_0-\delta(u).$$
\end{lemma}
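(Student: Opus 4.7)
The proof will be a one-line substitution: combine the index equality of Lemma~\ref{index ineq} with the ECH index computation of Lemma~\ref{lemma: ECH index of A}. Since $u$ is irreducible (hence somewhere injective) and belongs to the moduli space $\mathcal{M}_A(M,J^\Diamond)$, whose very definition forces the boundary components of $F$ to map bijectively to distinct Lagrangian circles $\partial D \times \beta_i$, the hypotheses of Lemma~\ref{index ineq} apply. Plugging $I(B) = 2 - 2k + 3k_0$ from Lemma~\ref{lemma: ECH index of A} into the identity relating $\op{ind}(u)$, $\delta(u)$, and $I(B)$ given by Lemma~\ref{index ineq} and solving for $\op{ind}(u)$ immediately yields the asserted formula.

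Two minor points would warrant a sentence of verification before the substitution. First, Lemma~\ref{index ineq} is stated as an \emph{equality} (rather than the usual Taubes-type inequality); this is justified here because each boundary circle of $F$ winds $\partial D$ exactly once, so the boundary partition data is automatically the admissible (minimal-multiplicity) partition. Second, the quantity $Q_{(\tau,\tau')}(B)$ entering the ECH index requires an embedded representative of $B$, which is transparent from the product structure $D \times \Sigma$: take one horizontal fiber $\{pt\}\times\Sigma$ together with $k_0$ vertical sheets $D \times \{pt_i\}$ and smooth their $k_0$ transverse intersections. Beyond these routine checks, there is no real obstacle; all the analytic content has already been absorbed into the two preceding lemmas, and this statement is pure bookkeeping.
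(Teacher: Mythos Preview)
Your approach is exactly the paper's: the authors' entire proof reads ``Follows from Lemma~\ref{lemma: ECH index of A} and the index inequality,'' and your two paragraphs of verification (boundary multiplicities forcing equality in the adjunction-type identity, existence of an embedded representative for $Q_{(\tau,\tau')}$) are reasonable glosses on that one line.

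One caveat you should flag rather than paper over: the substitution you describe does \emph{not} literally yield the asserted formula. Lemma~\ref{index ineq} gives $\op{ind}(u)+2\delta(u)=I(B)$, so with $I(B)=2-2k+3k_0$ one obtains $\op{ind}(u)=2-2k+3k_0-2\delta(u)$, not $2-2k+3k_0-\delta(u)$. This appears to be a typo in the lemma's statement; the only uses of the lemma in the paper are the inequality $\op{ind}(u_0)\le 2-2k+3k_0$ in the proof of Theorem~\ref{thm: count is one}(1) and the embeddedness conclusion $\delta(u)=0$ in (2)--(3), and both are unaffected by whether the coefficient is $1$ or $2$. Your write-up would be improved by noting this discrepancy rather than asserting that the substitution ``immediately yields the asserted formula.''
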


\begin{proof}
Follows from Lemma~\ref{lemma: ECH index of A} and the index inequality.
\end{proof}

\subsubsection{Main result}
The following is the main result of this subsection:

\begin{thm} \label{thm: count is one}
If $J^\Diamond$ is generic, then the following hold:
\begin{enumerate}
\item $\mathcal{M}_{A}(D \times \Sigma,J^\Diamond;z,{\bf w})=\mathcal{M}_{A}^{irr}(D \times \Sigma,J^\Diamond;z,{\bf w})$;
\item $\mathcal{M}_{A}(D \times \Sigma,J^\Diamond;z,{\bf w})$ is compact, regular, and $0$-dimensional;
\item the curves of $\mathcal{M}_{A}(D \times \Sigma,J^\Diamond;z,{\bf w})$ are embedded; and
\item $\#\mathcal{M}_{A}(D \times \Sigma,J^\Diamond;z,{\bf w}) \equiv 1 \mbox{ mod } 2$.
\end{enumerate}
\end{thm}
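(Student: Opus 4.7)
My plan is to reduce the count to an explicit computation for the product structure $J = j_D \times j_\Sigma$ via a compactness-plus-gluing argument, with the index inequality of Lemma~\ref{lemma: Fredholm index of A} controlling non-embedded and reducible contributions. The first step is dimensional. The constraint of passing through the interior point $z$ is of codimension $2$, while each constraint of passing through $w_i \in \bdry D \times \beta_i$ is of codimension $1$ (since the boundary of $u$ lies in the two-dimensional Lagrangian $\bdry D \times \beta_i$, requiring it to meet a fixed point there is codimension $1$). By Lemma~\ref{lemma: Fredholm index of A} with $k_0 = k$, an irreducible $J^\Diamond$-holomorphic curve $u$ in class $A$ has $\op{ind}(u) = 2 + k - \delta(u)$, so after imposing the $(2+k)$ codimensions of constraints the expected dimension of $\mathcal{M}_A^{irr}(M, J^\Diamond; z, {\bf w})$ is $-\delta(u) \le 0$. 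For a generic $J^\Diamond$, non-emptiness forces $\delta(u) = 0$ and the moduli space is a regular zero-dimensional manifold of embedded curves, giving (3) and the regularity and zero-dimensionality parts of (2).

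Next, I identify the unique stable model for the unperturbed structure $J$. Any $J$-holomorphic curve in $D \times \Sigma$ projects holomorphically to both factors. Since $A$ has intersection number $1$ with a generic fiber and $k$ with a generic disk, and $[\{pt\} \times \Sigma]$ is primitive and appears in $A$ with multiplicity $1$, any stable $J$-holomorphic representative of $A$ is the union of one section $\{p\} \times \Sigma$ and $k$ disks $D \times \{q_i\}$ with $q_i \in \beta_i$, joined at nodes $(p, q_i)$. Imposing passage through $z = (0, z^f)$, where $z^f \notin \bs\beta$, forces $p = 0$ (only the section can hit $z$), and imposing passage through each $(1, w_i)$ forces $q_i = w_i$. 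Hence there is a unique nodal stable $J$-holomorphic configuration $u_0$ through $(z, {\bf w})$: the fiber $\{0\} \times \Sigma$ attached to the $k$ disks $D \times \{w_i\}$ at the $k$ nodes $(0, w_i)$.

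Gromov compactness applied to a sequence of $J^\Diamond$-holomorphic curves in $\mathcal{M}_A(M, J^\Diamond; z, {\bf w})$ as $J^\Diamond \to J$ produces a subsequential limit which is a stable $J$-holomorphic representative of $A$ through $(z, {\bf w})$, hence is $u_0$ by the preceding paragraph. Consequently, for $J^\Diamond$ sufficiently $C^l$-close to $J$, every element of $\mathcal{M}_A(M, J^\Diamond; z, {\bf w})$ is a smooth, irreducible, embedded curve lying in a small neighborhood of $u_0$; this yields (1) and compactness in (2). Finally, for (4), I apply standard gluing to $u_0$: each of the $k$ nodes is a transverse intersection of the section with a disk, and the implicit function theorem applied to the linearized gluing equation (with Lagrangian boundary and the $z$ and ${\bf w}$ constraints) produces a unique nearby smooth $J^\Diamond$-holomorphic curve, so that $\# \mathcal{M}_A(M, J^\Diamond; z, {\bf w}) \equiv 1 \pmod 2$.

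The main obstacle is the gluing step for (4): one must verify that the pre-glued configuration $u_0$ is regular simultaneously at all $k$ nodes under the Lagrangian boundary conditions on $k$ different tori $\bdry D \times \beta_i$ together with the point constraints at $z$ and ${\bf w}$. The perturbation $J^\Diamond$ near $z$ is what kills the $2$-parameter family of fibers $\{p\} \times \Sigma$ and forces regularity on the section component, while the constant-$\Sigma$ disks are regular by a standard automatic transversality computation for holomorphic disks with Lagrangian boundary; assembling these into a single implicit function theorem for the $k$-fold simultaneous smoothing closes the argument.
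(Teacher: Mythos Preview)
Your treatment of (1)--(3) is essentially correct and close in spirit to the paper's, though the paper proves (1) by a direct index count rather than by closeness to a nodal model: a reducible curve would contain an irreducible piece in class $[\{pt\}\times\Sigma]+k_0[D\times\{pt\}]$ with $k_0<k$, and by Lemma~\ref{lemma: Fredholm index of A} its Fredholm index is at most $2-2k+3k_0$, which is strictly less than the $(k_0+2)$ codimensions of constraints it must satisfy, so it does not exist for generic $J^\Diamond$.

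The genuine gap is in (4). Your proposed ``standard gluing'' at the nodal curve $u_0$ does not apply, because the fiber component $\{0\}\times\Sigma$ is \emph{obstructed} for the product structure $J$: its normal bundle is trivial, so the cokernel of the linearized operator in the normal direction is $H^{0,1}(\Sigma,\mathcal{O}_\Sigma)\cong\C^k$, and its Fredholm index is $2-2k\le 0$. Standard gluing theorems require each component to be regular, so the implicit function theorem you invoke is not available. Your claim that the perturbation $J^\Diamond$ near the single point $z$ ``forces regularity on the section component'' cannot rescue this: first, the fiber is not $J^\Diamond$-holomorphic at all once you perturb near $z$; second, a perturbation supported near one point cannot kill a $k$-dimensional obstruction space. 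What would actually be required is an obstruction-bundle analysis over the $k$-complex-dimensional space of gluing parameters, and you give no argument for why the resulting signed count of zeros is $1\pmod 2$.

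The paper avoids this obstruction altogether. For (4) it first degenerates the fiber $\Sigma$ into a chain of $k$ tori (each carrying one $\beta_i$), reducing via a symplectic-sum argument to the case $k=1$. Even there it does not glue directly at $u_0$; instead it doubles $(D\times T^2,J^\Diamond)$ across the boundary to obtain a closed problem in $S^2\times T^2$, where the relevant irreducible curves are embedded with $c_1>0$ and hence regular by automatic transversality (Hofer--Lizan--Sikorav). A further degeneration of the base $S^2$ then reduces the count to a known Gromov--Witten computation from McDuff--Salamon, giving $1\pmod 2$. The point of the degeneration and doubling is precisely to replace the obstructed relative nodal configuration by unobstructed closed curves.
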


Hence $\#\mathcal{M}_{A}(D \times \Sigma,J^\Diamond;z,{\bf w})$ is a certain relative Gromov-Witten invariant~\cite{IP1} which is computed to be $1\mbox{ mod } 2$.  (What we are really computing here is a relative Gromov-Taubes invariant~\cite{T4}, although the two invariants coincide in this case.)

\begin{proof}

(1) Let us write $\mathcal{M}=\mathcal{M}_{A}(D \times \Sigma,J^\Diamond;z,{\bf w})$. Arguing by contradiction, suppose $u\in \mathcal{M}-\mathcal{M}^{irr}$.  Then $u$ consists of an irreducible component $u_0$ which passes through $z$ and $k_0<k$ points of ${\bf w}$, together with $k-k_0$ copies of $D\times\{pt\}$. By Lemma~\ref{lemma: Fredholm index of A}, $\op{ind}(u_0)\leq 2-2k+3k_0$.  On the other hand, the point constraints are $(k_0+2)$-dimensional. Hence $u_0$ does not exist for generic $J^\Diamond$, which is a contradiction.

(2),(3) The compactness follows from the usual Gromov compactness theorem. The regularity of $\mathcal{M}$ is immediate from the genericity of $J^\Diamond$ and (1).  Lemma~\ref{lemma: Fredholm index of A} implies the dimension calculation, as well as (3).

(4) We degenerate the fiber $\Sigma$ into a union of $k$ tori which are successively attached to one another. We perform this pinching away from the curves $\bs\beta$ and make sure that each torus contains exactly one component of $\bs\beta$. Then by a degeneration/gluing argument as in Section~II.\ref{P2-subsubsection: reduction to torus for G 3},  it suffices to prove the proposition for $k=1$. The case $k=1$ is proved in Lemmas~\ref{lemma: regular moduli spaces} and \ref{lemma:  count is one for k=1}.
\end{proof}

\subsubsection{Doubling}

We now explain how to double $u\in \mathcal{M}_{A}(D \times \Sigma,J^\Diamond;z,{\bf w})$. For technical reasons we will assume that $(\Sigma,j_{\Sigma})$ admits an anti-holomorphic involution $\sigma_{\Sigma}$ so that the curve $\bs\beta$ is contained in the fixed point set of $\sigma_{\Sigma}$.

Let $\mathcal{D}(D \times \Sigma)=(D \times \Sigma)_1\cup (D \times \Sigma)_2$ be the double of $D\times\Sigma$, obtained by gluing two copies $(D \times \Sigma)_1$ and $(D \times \Sigma)_2$ of $D \times \Sigma$ along their boundaries $\bdry D\times \Sigma$ via the identification $(x,y)_1 \sim (x,\sigma_{\Sigma}(y))_2$, where the subscript $i=1,2$ indicates the $i$th copy. Let $\mathcal{S}$ be the involution of $\mathcal{D}(D \times \Sigma)$ given by
$$\mathcal{S}: (x,y)_1\mapsto (x,\sigma_{\Sigma}(y))_2, \quad \mathcal{S} : (x,y)_2\mapsto (x,\sigma_{\Sigma}(y))_1.$$  We then define the almost complex structure $\mathcal{D}(J^\Diamond)$ on $\mathcal{D}(D \times \Sigma)$ by taking $J^\Diamond$ on $(D \times \Sigma)_1$ and $\mathcal{S}(-J^\Diamond)$ on $(D \times \Sigma)_2$.

Given $u\in \mathcal{M}_{A}(D \times \Sigma,J^\Diamond;z,{\bf w})$, let $\mathcal{D}(u)$ be its double, obtained by gluing $u$ and $\mathcal{S}(u)$.  The map $\mathcal{D}(u)$ is holomorphic by the Schwarz reflection principle. Therefore it is an element of
$$\mathcal{M}_{\mathcal{D},J^\Diamond}^{irr}:=\mathcal{M}^{irr}_{\mathcal{D}(A)}(\mathcal{D}(D \times \Sigma),\mathcal{D}(J^\Diamond); z,\mathcal{S}(z),{\bf w}),$$
because it represents the class
$$\mathcal{D}(A):=2[\{ pt\} \times \Sigma ]+[S^2 \times \{ pt\}]$$ and passes through $3$ points:
$(1,w)$, $z$, and $\mathcal{S} (z)$, where $z\in (D \times \Sigma)_1$.
Conversely, if $v:(F,j)\to \mathcal{D}(D \times \Sigma)$ is in $\mathcal{M}_{\mathcal{D},J^\Diamond}$, then $\mathcal{S}\circ v:(F,-j)\to \mathcal{D}(D \times \Sigma)$ is also in $\mathcal{M}_{\mathcal{D},J^\Diamond}$. Thus all the curves of $\mathcal{M}_{\mathcal{D},J^\Diamond}$ come in pairs, except those that are $\mathcal{S}$-invariant, and the $\mathcal{S}$-invariant curves are those obtained by the doubling procedure.

Summarizing, we have:

\begin{lemma}\label{caltech}
$\#\mathcal{M}^{irr}_{A}(D \times \Sigma,J^\Diamond;z,{\bf w})\equiv\#\mathcal{M}^{irr}_{\mathcal{D},J^\Diamond} \mbox{ mod } 2.$
\end{lemma}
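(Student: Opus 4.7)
The plan is the standard fixed-point-parity argument applied to a natural involution on the doubled moduli space.

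\emph{Step 1: Set up the involution.} I would define
\[
\iota : \mathcal{M}^{irr}_{\mathcal{D},J^\Diamond} \to \mathcal{M}^{irr}_{\mathcal{D},J^\Diamond}, \qquad \iota\bigl([v:(F,j)\to \mathcal{D}(M)]\bigr) = [\mathcal{S}\circ v:(F,-j)\to \mathcal{D}(M)].
\]
Since $\mathcal{D}(J^\Diamond)$ is constructed so that $\mathcal{S}$ pulls $J^\Diamond$ on $M_1$ back to $-J^\Diamond$ on $M_2$ (and vice versa), $\mathcal{S}\circ v$ is $\mathcal{D}(J^\Diamond)$-holomorphic once the source complex structure is reversed. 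The class $\mathcal{D}(A)=2[\{pt\}\times\Sigma]+[S^2\times\{pt\}]$ is manifestly $\mathcal{S}$-invariant, and the triple of marked points $\{z,\mathcal{S}(z),(1,w)\}$ is $\mathcal{S}$-stable because $z\leftrightarrow\mathcal{S}(z)$ while $(1,w)\in\partial D\times\bs\beta$ is pointwise fixed by $\mathcal{S}$ (using $\bs\beta\subset \op{Fix}(\sigma_\Sigma)$). Hence $\iota$ is a well-defined involution; a quick check confirms $\iota^2=\op{id}$ using $\mathcal{S}^2=\op{id}$ on $\mathcal{D}(M)$.

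\emph{Step 2: Reduce to fixed points.} Free orbits of $\iota$ have size $2$ and so contribute $0\pmod 2$, giving $\#\mathcal{M}^{irr}_{\mathcal{D},J^\Diamond}\equiv \#\op{Fix}(\iota)\pmod 2$.

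\emph{Step 3: Identify $\op{Fix}(\iota)$ with $\mathcal{M}^{irr}_A$ via doubling.} The forward map $u\mapsto \mathcal{D}(u)$ lands in $\mathcal{M}^{irr}_{\mathcal{D},J^\Diamond}$ (by Schwarz reflection across the totally real boundary $\partial D\times\bs\beta$), is manifestly $\iota$-fixed, and is injective since $u$ is recovered as $\mathcal{D}(u)|_{M_1}$. For the inverse: given an $\iota$-fixed $v$, which for generic $J^\Diamond$ is somewhere injective by irreducibility, there is a unique anti-holomorphic involution $\phi:(F,j)\to(F,-j)$ with $\mathcal{S}\circ v=v\circ\phi$. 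The set $\op{Fix}(\phi)$ is nonempty since $v$ must hit the $\mathcal{S}$-fixed points $(1,w_i)$, so cutting $F$ along $\op{Fix}(\phi)$ yields halves $F^\pm$ with $\phi(F^+)=F^-$; the restriction $v|_{F^+}$ lands in $M_1$ and is the desired element of $\mathcal{M}^{irr}_A(M,J^\Diamond;z,\mathbf{w})$. The two constructions are inverse to one another.

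\emph{Main obstacle.} The delicate point in Step~3 is verifying that the boundary $v|_{\op{Fix}(\phi)}$ lies on the Lagrangian $\partial D\times\bs\beta$, rather than merely on the full seam $\partial D\times\Sigma$ (which is the total fixed locus of $\mathcal{S}$ in $\mathcal{D}(M)$). I would argue this by a homological intersection count: each boundary circle of $F^+$ maps to a closed loop on $\partial D\times\Sigma$, and together they bound a class in $M_1$ whose double is $\mathcal{D}(A)$; pairing against the fibers $\{pt\}\times\Sigma$ and against loops $\partial D\times\{w_i\}$ then forces each boundary circle to wrap once around a single $\partial D\times\beta_i$. Secondary concerns are the transversality of the cut along $\op{Fix}(\phi)$ and the uniqueness of $\phi$, both of which are ensured by the genericity of $J^\Diamond$ together with somewhere-injectivity guaranteed by the irreducibility hypothesis.
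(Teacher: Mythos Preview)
Your proposal is correct and follows exactly the paper's approach: the paper's argument (the paragraph immediately preceding the lemma) is precisely the involution-and-fixed-point-parity argument you describe, and the lemma is stated as a summary of that discussion. You in fact supply more detail than the paper, which simply asserts that ``the $\mathcal{S}$-invariant curves are those obtained by the doubling procedure'' without elaborating on the inverse construction or the boundary issue you flag as the main obstacle.
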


\subsubsection{The $k=1$ case}

For the rest of the subsection we assume $k=1$. We first consider the case where $J=j_D \times j_{\Sigma}$ is a product complex structure.

\begin{lemma} \label{degenerate curve}
If $k=1$, then:
\begin{enumerate}
\item $\mathcal{M}_{A}(D \times \Sigma,J;z,{\bf w})$ is a one-element set consisting of a degenerate curve $(D\times \{ w \})\cup (\{0\} \times \Sigma)$; and
\item $\mathcal{M}_{\mathcal{D},J}$ is a one-element set consisting of a degenerate curve $(S^2 \times \{ w \} )\cup (\{0\} \times \Sigma )_1\cup (\{0\} \times \Sigma )_2.$
\end{enumerate}
\end{lemma}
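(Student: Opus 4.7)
My approach exploits the product structure of $J$ together with Schwarz reflection and Liouville's theorem. Both parts share the same skeleton, with part (2) requiring extra care because of the twisting by $\sigma_\Sigma$.

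First I would decompose any $u\in\mathcal{M}_A(M,J;z,\mathbf{w})$ (respectively $v\in\mathcal{M}_{\mathcal{D},J}$) into its irreducible components. Since $J=j_D\times j_\Sigma$ is a product, each component factors as $u^i=(u^i_D,u^i_\Sigma)$ with both projections holomorphic. Computing the degree of $u^i_D$ by intersecting with a generic fiber of $\pi_D:M\to D$ (respectively of $\mathcal{D}(M)\to S^2$), each irreducible component is either a \emph{section} (degree one) or a \emph{fiber} (degree zero). A section has $u^i_D$ a biholomorphism onto its image and $u^i_\Sigma$ holomorphic; a fiber maps as a biholomorphism $\Sigma\to\{x_0\}\times\Sigma$ onto a single fiber.

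The heart of the argument is that in every section component the $\Sigma$-coordinate must be constant. In part (1), $u^i_\Sigma:D\to\Sigma$ sends $\partial D$ to the Lagrangian $\beta_1$; lifting to the universal cover $\mathbb{C}\to\Sigma$, the lift $\tilde u^i_\Sigma:D\to\mathbb{C}$ sends $\partial D$ to a horizontal real line, so Schwarz reflection extends it to a holomorphic $S^2\to\mathbb{C}$, which is constant by Liouville. Hence $u^i_\Sigma\equiv c\in\beta_1$. In part (2), the section determines data $g_1:D\to\Sigma$ holomorphic (on $M_1$) and $g_2:D\to\Sigma$ anti-holomorphic (on $M_2$, because $\mathcal{D}(J)|_{M_2}=(-j_D,j_\Sigma)$), with the boundary matching $g_2=\sigma_\Sigma\circ g_1$. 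Lifting $g_1$ to $\tilde g_1:D\to\mathbb{C}$ and combining the anti-holomorphic involution $\sigma_\Sigma$ (locally complex conjugation on the universal cover) with the anti-holomorphic identification of the upper hemisphere with $D$, I Schwarz-reflect $g_1$ across the equator to get a globally holomorphic map $\tilde G:S^2\to\mathbb{C}$; Liouville again forces $\tilde G$, hence $g_1$, to be constant, so the section is $S^2\times\{c\}$.

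With sections constant in the $\Sigma$-direction, each section contributes class $[D\times\{\mathrm{pt}\}]$ (part (1)) or $[S^2\times\{\mathrm{pt}\}]$ (part (2)), and each fiber contributes $[\{\mathrm{pt}\}\times\Sigma]$. Matching $A=[\{\mathrm{pt}\}\times\Sigma]+[D\times\{\mathrm{pt}\}]$ forces exactly one section plus one fiber in part (1); matching $\mathcal{D}(A)=[S^2\times\{\mathrm{pt}\}]+2[\{\mathrm{pt}\}\times\Sigma]$ forces one section plus two fibers in part (2). The point constraints then pin everything down: in part (1), since $c\in\beta_1$ but $z^f\notin\beta_1$, the section cannot carry $z$, so the fiber must be $\{0\}\times\Sigma$ and the section $D\times\{w\}$; in part (2), the symmetric constraints $z$ and $\mathcal{S}(z)$ force the two fibers to be $\{0\}\times\Sigma$ in $M_1$ and $M_2$ respectively, leaving the section to be $S^2\times\{w\}$. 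The main obstacle I expect is the Liouville step in part (2): the delicacy lies in verifying that the doubled map $\tilde G$ is genuinely holomorphic \emph{across} the equator, which requires the anti-holomorphicities of $\iota_2$ and of $\sigma_\Sigma$ to combine correctly; once this is set up the rest is a direct adaptation of the Lagrangian-boundary argument of part (1).
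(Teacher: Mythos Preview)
Your argument is correct and reaches the same conclusion, but the route differs from the paper's. The paper observes that for the product $J$ both projections $\pi_D\circ u$ and $\pi_\Sigma\circ u$ are degree~$1$ maps; this immediately forces $F$ to have two components $F_1,F_2$ with $\pi_D|_{F_1}$ and $\pi_\Sigma|_{F_2}$ biholomorphisms. Then $\pi_\Sigma|_{F_1}$ is constant simply ``because $F_1$ is a disk'' (implicitly $\pi_2(\Sigma,\beta_1)=0$, which is equivalent to your lift-and-Liouville step), while $\pi_D|_{F_2}$ is constant by a preimage-cardinality count against $\deg(\pi_D\circ u)=1$. For (2) the paper says only ``similar''. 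You instead classify components by degree over the base alone, make the Schwarz--Liouville argument explicit to flatten the section, and then read off the fiber count from the homology class. Your approach buys a transparent treatment of (2), where the $\sigma_\Sigma$-twist means there is no global $\pi_\Sigma$ on $\mathcal{D}(M)$ and the paper's two-projection degree argument is less immediate; the paper's approach is quicker for (1) because degree~$1$ of $\pi_\Sigma$ pins down the fiber component directly without first analyzing the section. One small reorganization: your claim that a degree-zero component is automatically a \emph{biholomorphism} onto a fiber is only justified once the total fiber class is known to be $[\{\mathrm{pt}\}\times\Sigma]$ (resp.\ $2[\{\mathrm{pt}\}\times\Sigma]$), which in turn requires the section to already be horizontal---so that assertion should come after the Liouville step, as it effectively does in your homology-matching paragraph.
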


\begin{proof}
(1) follows from the homological constraint
$$A=[\{pt\}\times \Sigma]+[D\times\{pt\}].$$ If $u:(F,j)\to (D \times \Sigma,J)$ is a stable map in $\mathcal{M}_{A}(D \times \Sigma,J;z,{\bf w})$, then $\pi_D\circ u$ and $\pi_{\Sigma}\circ u$ are degree $1$ maps.  This implies that $F$ consists of two components $F_1, F_2$ and $\pi_D\circ u|_{F_1}$ and $\pi_{\Sigma}\circ u|_{F_2}$ are biholomorphisms. On the other hand, $\pi_\Sigma\circ u|_{F_1}$ maps to a point since $F_1$ is a disk and $\pi_D\circ u|_{F_2}$ maps to a point since otherwise the cardinality of $(\pi_D\circ u)^{-1}(pt)$ for generic $pt$ will be larger than $\deg(\pi_D\circ u)=1$. (2) is similar.
\end{proof}

Let $J^\Diamond$ be an almost complex structure which is $C^l$-close to $J$.  By Gromov compactness and Lemma~\ref{degenerate curve}, all the curves of $\mathcal{M}_{A}(D \times \Sigma,J^\Diamond;z,{\bf w})$ and $\mathcal{M}_{\mathcal{D},J^\Diamond}$ are close to the degenerate curves described in Lemma~\ref{degenerate curve}.

\begin{lemma} \label{lemma: regular moduli spaces}
If $k=1$ and $J^\Diamond$, ${\bf w}$, and $\bs\beta$ are generic, then the following hold:
\begin{enumerate}
\item $\mathcal{M}_{\mathcal{D},J^\Diamond}=\mathcal{M}^{irr}_{\mathcal{D},J^\Diamond}$;
\item the curves of $\mathcal{M}_{\mathcal{D},J^\Diamond}$ are embedded; and
\item $\mathcal{M}_{\mathcal{D},J^\Diamond}$ is compact, regular, and $0$-dimensional.
\end{enumerate}
\end{lemma}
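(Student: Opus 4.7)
I would prove Lemma~\ref{lemma: regular moduli spaces} by combining an adjunction/index calculation in the closed $4$-manifold $\mathcal{D}(M)$ with Gromov compactness and generic transversality. The numerical inputs for $k=1$ are $c_1(T\mathcal{D}(M))\cdot\mathcal{D}(A)=2(2-2k)+2=2$ and $\mathcal{D}(A)\cdot\mathcal{D}(A)=4$, using $[pt\times\Sigma]^2=0=[S^2\times pt]^2$ and $[pt\times\Sigma]\cdot[S^2\times pt]=1$. For an irreducible $J^\Diamond$-holomorphic curve of geometric genus $g$ with $\delta$ self-intersection points representing $\mathcal{D}(A)$, the $4$-manifold adjunction formula yields $2g-2+2\delta=\mathcal{D}(A)^2-c_1(\mathcal{D}(A))=2$, hence $g+\delta=2$, while the Fredholm index is $\op{ind}=2c_1(\mathcal{D}(A))+2g-2=2g+2$. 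The three codimension-two point constraints at $z$, $\mathcal{S}(z)$, and $(1,w_1)$ lower the expected dimension by $6$ to $2g-4$, so only $g=2$, $\delta=0$ contributes non-negatively. This simultaneously establishes item (2) and the dimension-$0$ assertion in item (3).

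\textbf{Exclusion of reducible limits.} I would enumerate the ways of writing $\mathcal{D}(A)=2[pt\times\Sigma]+[S^2\times pt]$ as a sum of effective classes. Since the $[S^2\times pt]$-coefficient is $1$, no cover of that class appears, and the possibilities reduce to a single component containing $[S^2\times pt]$ together with one or two fibers in class $[pt\times\Sigma]$ (possibly joined as a connected double cover). Every such splitting must contain a component of class $[pt\times\Sigma]$ which is forced to pass through $z$ or $\mathcal{S}(z)$. A simple $J^\Diamond$-holomorphic torus in $[pt\times\Sigma]$ has $c_1=0$ and Fredholm index $2g-2=0$, dropping to $-2$ under one point constraint; generic $J^\Diamond$ therefore excludes such fibers, and by passing to the underlying simple fiber the same estimate rules out connected double covers of $[pt\times\Sigma]$. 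Together with Gromov compactness in the closed $4$-manifold $\mathcal{D}(M)$ (whose relevant boundary stratum is the nodal curve of Lemma~\ref{degenerate curve}(2)), this proves item (1) and the compactness half of item (3).

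\textbf{Regularity and main obstacle.} With every $v\in\mathcal{M}_{\mathcal{D},J^\Diamond}$ shown to be a simple embedded curve, generic $J^\Diamond$, which by construction is free to vary in a neighborhood of $z$ through which each such $v$ passes, cuts these curves out transversally, finishing item (3). The main obstacle is the reducibility step: for the unperturbed product structure $J=j_D\times j_\Sigma$ the fiber $\{0\}\times\Sigma$ does pass through $z$, so one must use genericity of $J^\Diamond$ (not just its smallness) to avoid persistent fiber components, and transmit the point constraint through any connected double cover of $[pt\times\Sigma]$ to its simple underlying fiber. The genericity hypotheses on $\bs\beta$, $\mathbf{w}$, and the anti-holomorphic involution $\sigma_\Sigma$ serve to place all auxiliary data in general position so that these dimension counts are valid.
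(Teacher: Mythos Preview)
Your argument is correct and closely parallels the paper's: both use adjunction plus the index count for (2), Gromov compactness together with Lemma~\ref{degenerate curve}(2) for (1), and genericity of $J^\Diamond$, $\mathbf{w}$, $\bs\beta$ for the point-constrained part of (3). The one substantive difference is that for the \emph{unconstrained} regularity in (3) the paper appeals to the automatic transversality theorem of Hofer--Lizan--Sikorav (applicable because $v$ is embedded with $c_1(v^*T\mathcal{D}(M))=2$), whereas you rely on generic transversality via the fact that every curve meets the perturbation neighborhood of $z$; both routes are valid, the paper's being slightly more robust since it does not require genericity for that step.

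One cautionary remark on your write-up of (1): the assertion that a fiber component is ``forced to pass through $z$ or $\mathcal{S}(z)$'' does not follow from the bare enumeration of effective splittings of $\mathcal{D}(A)$. A priori the component containing $[S^2\times pt]$ could be in class $[S^2\times pt]+[pt\times\Sigma]$ and carry both $z$ and $\mathcal{S}(z)$, leaving either an unconstrained fiber or a fiber through $(1,w_1)$; the latter sits over $1\in\partial D$, far from the perturbation region, so you cannot invoke transversality there. The claim becomes correct only once you use Lemma~\ref{degenerate curve}(2) (via Gromov compactness for $J^\Diamond$ close to $J$) to know that every fiber component is close to some $(\{0\}\times\Sigma)_i$ and hence carries the corresponding marked point $z$ or $\mathcal{S}(z)$. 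Invoke that lemma \emph{before} stating the claim, not after.
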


\begin{proof}
(1) Arguing by contradiction, suppose $v\in \mathcal{M}_{\mathcal{D},J^\Diamond}- \mathcal{M}^{irr}_{\mathcal{D},J^\Diamond}$. If $v$ has a component $S^2\times\{pt\}$, then $v$ must also have a fiber component $\widetilde{v}$ which is close to $(\{0\}\times \Sigma)_i$, $i=1$ or $2$, since $v$ is close to a degenerate curve from Lemma~\ref{degenerate curve}(2).  On the other hand, if $v$ does not have a component $S^2\times\{pt\}$, then $v$ must have a fiber component $\widetilde{v}$ which is close to some $(\{0\}\times \Sigma)_i$. In either case, such a fiber component $\widetilde{v}$ cannot exist by the genericity of $J^\Diamond$. This proves (1).

(2) Let $v\in \mathcal{M}^{irr}_{\mathcal{D},J^\Diamond}$. The proof is similar to that of Lemma~\ref{thm: count is one}(3) and follows from the adjunction inequality \cite{M1,M2} (compare with Lemma~\ref{index ineq}): If
$$I(v)=c_1(v^*T\mathcal{D}(D \times \Sigma))+ Q(v),$$
where $Q(v)$ is the self-intersection number of $v$, then
$$\op{ind}(v) +2\delta(v)= I(v),$$
where $\delta(v)\geq 0$ is an integer count of the singularities.  Since $c_1(v^*T\mathcal{D}(D \times \Sigma))=2$ and $Q(v)=4$, it follows that $I(v)=6$. On the other hand,
$$\op{ind}(v)= -\chi(F)+2c_1(v^*T\mathcal{D}(D \times \Sigma))=-(-2)+2(2)=6,$$
where $F$ is the domain of $v$ with $\chi(F)=-2$.  Hence $v$ is embedded by the adjunction inequality.

(3) Since $v$ is embedded by (2) and $c_1(v^*T\mathcal{D}(D \times \Sigma))=2$, the regularity of $v$ without the point constraints follows from automatic transversality (cf.\ Hofer-Lizan-Sikorav~\cite[Theorem~1]{HLS}). The regularity with point constraints is the consequence of the genericity of $J^\Diamond$, ${\bf w}$, and $\bs\beta$. The rest of the assertion is immediate.
\end{proof}

\begin{lemma} \label{lemma: count is one for k=1}
If $k=1$ and $J^\Diamond$, ${\bf w}$, and $\bs\beta$ are generic, then $\#\mathcal{M}_{A}(D \times \Sigma,J^\Diamond;z,{\bf w})\equiv 1 \mbox{ mod } 2.$
\end{lemma}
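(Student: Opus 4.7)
The plan is to reduce to counting the doubled moduli space via Lemma~\ref{caltech}, use deformation invariance of the mod $2$ count to replace $J^\Diamond$ by a perturbation of the split complex structure, and then perform a local gluing analysis at the nodes of the resulting degenerate curve.

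First, by Lemma~\ref{caltech} together with Lemma~\ref{lemma: regular moduli spaces}(1), it suffices to show $\#\mathcal{M}_{\mathcal{D},J^\Diamond}\equiv 1\pmod 2$; Lemma~\ref{lemma: regular moduli spaces}(3) ensures that this count is well-defined modulo $2$. A standard cobordism argument applied to the parametric moduli space over a generic path of almost complex structures shows that $\#\mathcal{M}_{\mathcal{D},J^\Diamond}\pmod 2$ does not depend on the generic choice of $J^\Diamond$. We therefore take $J^\Diamond$ to be a generic $C^l$-small perturbation of the split integrable structure $\mathcal{D}(j_D\times j_\Sigma)$.

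For the unperturbed split $J$, Lemma~\ref{degenerate curve}(2) shows that $\mathcal{M}_{\mathcal{D},J}$ consists of the single reducible curve
$$C=(S^2\times\{w\})\cup(\{0\}\times\Sigma)_1\cup(\{0\}\times\Sigma)_2,$$
which has two nodes, one in each copy of $M$. By Gromov compactness, every curve of $\mathcal{M}_{\mathcal{D},J^\Diamond}$ for $J^\Diamond$ sufficiently close to $J$ is $C^0$-close to $C$. It therefore remains to show that for generic such $J^\Diamond$, exactly one smoothing of $C$ modulo $2$ lies in $\mathcal{M}_{\mathcal{D},J^\Diamond}$.

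This is carried out by a local gluing argument at the two nodes of $C$. Each node is a transverse intersection of two smooth irreducible components; the sphere component $S^2\times\{w\}$ is pinned down by the boundary point constraint $(1,w)$, while each torus $\{0\}\times\Sigma_i$ is pinned down by one of the interior point constraints $z$ or $\mathcal{S}(z)$. The first-order perturbation $J^\Diamond-J$ determines both gluing parameters (one per node), and the standard gluing theorem for nodal holomorphic curves with transverse nodes and regular components then produces exactly one smoothing of $C$ modulo $2$. The main obstacle is this gluing/smoothing analysis: one must verify that the linearized obstruction at each node is generically nondegenerate, so that the resulting count is $1$ modulo $2$. This follows from the transversality of the nodes together with the regularity of the smoothed curves established in Lemma~\ref{lemma: regular moduli spaces}, via a standard application of the implicit function theorem in the gluing framework.
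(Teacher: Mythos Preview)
Your reduction to $\#\mathcal{M}_{\mathcal{D},J^\Diamond}$ via Lemma~\ref{caltech} and Lemma~\ref{lemma: regular moduli spaces} is the same as the paper's, but your endgame has a real gap. The ``standard gluing theorem for nodal holomorphic curves with transverse nodes and regular components'' does not apply to $C=(S^2\times\{w\})\cup(\{0\}\times\Sigma)_1\cup(\{0\}\times\Sigma)_2$: the fiber tori $(\{0\}\times\Sigma)_i$ are \emph{not} regular for the split $J$. Their normal bundle is trivial, so the linearized operator is $\bar\partial$ on $C^\infty(T^2,\C)$, which has both $2$-dimensional kernel and $2$-dimensional cokernel (index $0$). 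A point constraint kills the kernel but leaves the cokernel intact, so the component is obstructed, not ``pinned down'' in the sense required for naive gluing. The regularity in Lemma~\ref{lemma: regular moduli spaces} concerns the \emph{smoothed} irreducible curves for the perturbed $J^\Diamond$, not the components of the nodal limit for the split $J$; it does not feed back into a gluing theorem at the nodes.

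What is actually needed is a Kuranishi/obstruction-bundle computation showing that the obstruction section has a single transverse zero modulo $2$. This is nontrivial, and it is precisely what the paper sidesteps: it degenerates the base $S^2$ into two spheres along the equator and applies the Ionel--Parker symplectic sum formula to factor $\#\mathcal{M}_{\mathcal{D},J^\Diamond}$ as $(\#\mathcal{M}'_B)^2$, where $\mathcal{M}'_B$ is a two-point moduli space in $S^2\times\Sigma$ in the class $[S^2\times\{pt\}]+[\{pt\}\times\Sigma]$. That count is computed to be $1$ in McDuff--Salamon \cite[Example~8.6.12]{MS}, where the obstruction analysis for a single torus-plus-sphere node is done carefully. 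If you want to salvage your direct approach you would have to reproduce that obstruction computation at each of the two nodes; as written, the appeal to ``standard gluing'' is not justified.
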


\begin{proof}
By Lemma~\ref{caltech}, Lemma~\ref{lemma: regular moduli spaces}(1), and Theorem~\ref{thm: count is one}(1),
\begin{equation} \label{caltech prime}
\#\mathcal{M}_{A}(D \times \Sigma,J^\Diamond;z,{\bf w})\equiv\#\mathcal{M}_{\mathcal{D},J^\Diamond} \mbox{ mod } 2.
\end{equation}
We reduce the calculation of $\mathcal{M}_{\mathcal{D},J^\Diamond}$ to a calculation in McDuff-Salamon \cite[Example~8.6.12]{MS} by degenerating the base $S^2$ into two spheres along the curve corresponding to the boundary of the two copies of $D$.

More precisely,
let $$B=[S^2\times\{pt\}]+[\{pt\}\times \Sigma]\in H_2(S^2\times \Sigma),$$
let $I$ be a product complex structure on $S^2\times \Sigma$, and let $I^\Diamond$ be a $C^l$-small perturbation of $I$ such that $I=I^\Diamond$ away from a neighborhood of $(0,z^f)$. Here we are viewing $S^2\simeq \C\cup\{\infty\}$. Let $$\mathcal{M}_{B}:=\mathcal{M}_{B}(S^2\times \Sigma,I^\Diamond; (0,z^f),(\infty,w))$$ 
be the moduli spaces of $I^\Diamond$-curves in the class $B$ that pass through $(0,z^f)$ and $(\infty,w)$, and let $\mathcal{M}'_{B}$ be the subset of curves in $\mathcal{M}_{B}$ that are contained in a neighborhood of $(S^2 \times \{ w \} )\cup(\{0\} \times \Sigma )$. By McDuff-Salamon~\cite[Example~8.6.12]{MS}, $\#\mathcal{M}'_{B}=1$. By the symplectic sum theorem of Ionel-Parker~\cite[p.\ 940]{IP2} (also see Section~II.\ref{P2-subsubsection: reduction to torus for G 3}),  $$\#\mathcal{M}_{\mathcal{D},J^\Diamond}\equiv (\#\mathcal{M}'_{B})^2\equiv 1 \mbox{ mod } 2.$$
Hence $\#\mathcal{M}_{A}(D \times \Sigma,J^\Diamond;z,{\bf w})\equiv 1 \mbox{ mod } 2$ by Equation~\eqref{caltech prime}.
\end{proof}

\subsection{Family of cobordisms}

We now describe a family of marked points $z_\tau\in X$ and a family of almost complex structures $J_{\tau}^\Diamond$ on $X$ for $\tau\in[0,1)$, as well as their limits for $\tau=1$. These families give rise to the chain homotopy $H$ of Theorem~\ref{thm: U-map}.

Let $z^b_\tau \in int (B)$, $\tau\in[0,1)$, be a family of points such that $z^b_0 =z^b$, $\lim_{\tau\to 1} z^b_\tau=(0,0)$, and $z^b_\tau\in \{s=0\}$ for $\tau\in[{1\over 2},1)$. Then let $z_\tau=(z^b_\tau,z^f)\in X$.

Assume that the almost complex structure $J$ on $X$ is a product complex structure on $\R\times[0,\varepsilon]\times\Sigma$ for $\varepsilon>0$ small.  We then define a family of $C^l$-small perturbations $J_{\tau}^\Diamond$, $\tau\in[0,1)$, of $J$ such that $J_{\tau}^\Diamond=J$ away from a small neighborhood $N(z_\tau)$ of $z_\tau$ and $$N(z_\tau)\cap (\R \times [0,1] \times \{(z')^f \})=\varnothing.$$

In the limit $\tau=1$, the base $\widetilde{B}$ is $(B\sqcup D)/\sim$, where $D=\{|z|\leq 1\}\subset \C$ and $\sim$ identifies $(0,0)\in B$ with $-1\in D$, and the total space $\widetilde{X}$ is $(X\sqcup (D\times\Sigma))/\sim$, where $((0,0),x)\sim (-1,x)$ for all $x\in \Sigma$. See Figure~\ref{figure: bubbling}. We write $w^b$ for the node $[(0,0)]=[-1]\in \widetilde{B}$.  Let $\pi_{B}: X \to B$ and $\pi_{D}:D\times\Sigma\to D$ be the projections onto the first factors.

\begin{figure}[ht]
\begin{center}
\psfragscanon
\psfrag{a}{\tiny $\tau\to 1$}
\psfrag{b}{\tiny $z^b_\tau$}
\psfrag{c}{\tiny $z^b_1$}
\psfrag{d}{\tiny $w^b$}
\includegraphics[width=5cm]{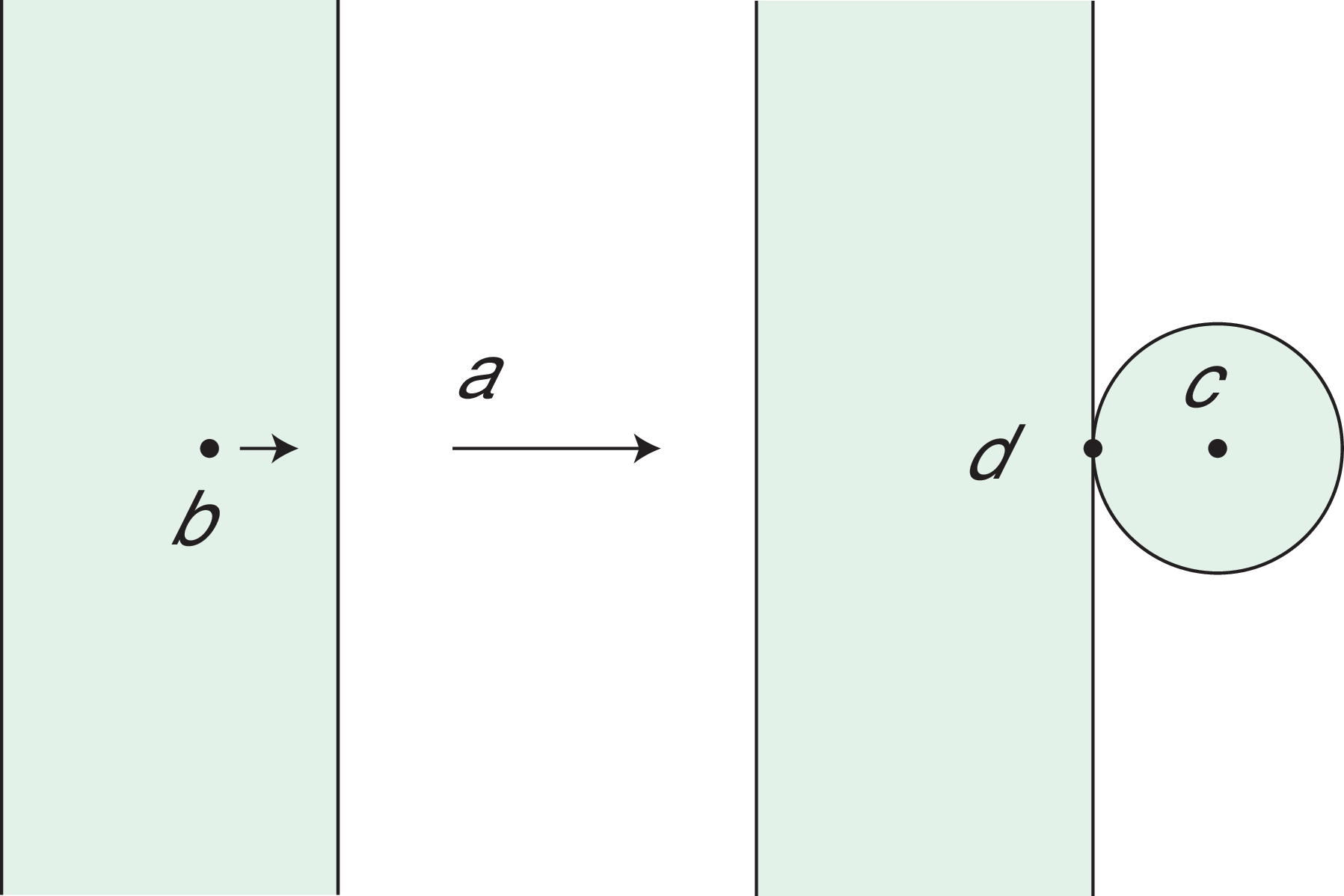}
\end{center}
\caption{}
\label{figure: bubbling}
\end{figure}

The limit $z_1$ of $z_\tau$ is in $D\times\Sigma$ and we assume that $z_1^b=0\in int(D)$. When $\tau=1$, the almost complex structure  $J_1^\Diamond$ restricts to the complex structure $J$ on $X$ 
and to the almost complex structure $ J^{D, \Diamond}$, where
$J^D$ is a product complex structure on $D\times\Sigma$ and $J^{D, \Diamond}$ is a $C^l$-small perturbation of $J^{1,D}$ such that $J^{D, \Diamond}=J^D$ away from a small neighborhood $N(z_1)$ of $z_1$ and $$N(z_1)\cap (D \times \{(z')^f \})=\varnothing.$$

The Lagrangian boundary condition for $\tau\in[0,1)$ is $L_{\bs\alpha}\cup L_{\bs\beta}$. In the limit $\tau=1$, we use $L_{\bs\alpha}\cup L_{\bs\beta}$ for $X$ and $\bdry D\times \bs\beta$ for $D\times \Sigma$.

\subsection{Proof of Theorem~\ref{thm: U-map}}
Let $u_{\tau_i}$, $\tau_i\to 1$, be a sequence of $I_{HF}=2$ curves in $(X,J_{\tau_i}^\Diamond)$ from $[{\bf y} ,i]$ to $[{\bf y'},i-k]$ that pass through $z_{\tau_i}$. Applying Gromov compactness, we obtain the limit $\tilde{u}=u_B\cup u_D$, where $u_B\subset X$, $u_D\subset D\times \Sigma$, and $u_D$ passes through $z_1$. Components of $\tilde{u}$ that map to the fiber $\{w^b\}\times\Sigma$ will be viewed as components of $u_D$.

\begin{lemma} \label{lemma: A} $\mbox{}$
\begin{enumerate}
\item $[u_D]=k_0[\{ pt\} \times \Sigma ] +2g  [D \times \{ pt\}]\in H_2(D\times \Sigma)$ for some $0<k_0\leq k$.
\item $I(u_D)=2k_0+2g\geq 2g+2$.
\end{enumerate}
\end{lemma}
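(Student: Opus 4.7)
I would prove part (1) by identifying the relative homology class of $u_D$ from its intersection data, and then derive part (2) as a direct consequence of the ECH-index formula of Lemma~\ref{lemma: ECH index of A}. The bubble $u_D$ is a proper holomorphic map to $D\times\Sigma$ with boundary on $\bdry D\times\bs\beta$, so its class in $H_2(D\times\Sigma,\bdry D\times\bs\beta)$ decomposes as
\[ [u_D]=k_0[\{pt\}\times\Sigma]+\sum_{j=1}^{2g}b_j[D\times\{w_j\}], \qquad w_j\in\beta_j. \]
Writing $b=\sum_j b_j$ recovers the form $k_0[\{pt\}\times\Sigma]+b[D\times\{pt\}]$ in the statement, and $k_0$ is the intersection number of $u_D$ with a $\Sigma$-section $D\times\{z^f\}$, while $b$ is the degree of $\pi_D\circ u_D\colon F_D\to D$.

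For the bounds $0<k_0\le k$, I would apply positivity of intersection twice. Since $u_D$ passes through $z_1=(0,z^f)$ and $(J^{1,D})^\Diamond$ is a $C^l$-small perturbation of the product structure, the disk $D\times\{z^f\}$ is approximately $(J^{1,D})^\Diamond$-holomorphic near $z_1$; hence the local intersection at $z_1$ is positive and $k_0\ge 1$. For the upper bound, I would compare with the $J$-holomorphic strip $\R\times[0,1]\times\{(z')^f\}$, whose $\tau=1$ degeneration acquires the extra disk $D\times\{(z')^f\}$. By the choice of $J^\Diamond$, no component of $u_B\cup u_D$ is homologous to a fiber through the perturbation region, so positivity of intersection on each piece yields
\[ k=u_{\tau_i}\cdot\bigl(\R\times[0,1]\times\{(z')^f\}\bigr)=u_B\cdot(\text{strip})+u_D\cdot(D\times\{(z')^f\})\ge k_0, \]
using $u_D\cdot(D\times\{(z')^f\})=k_0$ (since $(z')^f$ and $z^f$ both avoid $\bs\beta$, so the two disks are homologous in the relative group). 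For the equality $b=2g$, I would invoke the multisection structure of $u_\tau$: the projection $\pi_B\circ u_\tau\colon F\to B$ is a branched cover of degree $2g$, and the pinch point $(0,0)\in\bdry B$ lies in the interior of the $\bs\beta$-edge $\{t=0\}$, so under Gromov compactness with base degeneration the entire local branched-cover degree is absorbed by $\pi_D\circ u_D\colon F_D\to D$, forcing $b=2g$.

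Part (2) is then a direct bilinear calculation: applying the ECH-index formula of Lemma~\ref{lemma: ECH index of A} with fiber class coefficient $k_0$, disk class coefficient $2g$, and fiber genus $2g$ gives
\[ I(u_D)=k_0(2-4g)+2g+2k_0(2g)=2k_0+2g, \]
which is $\ge 2g+2$ since $k_0\ge 1$. The main obstacle I anticipate is the localization argument that forces $b=2g$: one must rule out the possibility that only a proper subset of the $2g$ multisection sheets passes into the bubble. Concretely, this means analyzing how the $2g$ $L_{\bs\beta}$-boundary arcs of $u_\tau$, one on each $\R\times\{0\}\times\beta_j$, behave as $z^b_\tau\to(0,0)$, and using stability of the limit together with the fact that $z^b_\tau$ approaches the $\bs\beta$-edge transversely in order to conclude that every sheet must follow the bubble.
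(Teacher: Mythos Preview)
Your approach is correct and matches the paper's: identify the disk coefficient via the multisection degree, the fiber coefficient via intersection with a section disk, and then apply the bilinear ECH-index computation of Lemma~\ref{lemma: ECH index of A}. Two small points. First, your anticipated ``main obstacle'' for $b=2g$ dissolves once you note, as the paper does in one line, that each $u_{\tau_i}$ is a degree-$2g$ multisection of $\pi_B$ away from a neighborhood of $z^b_{\tau_i}$, and this degree is preserved at the node in the Gromov limit; no sheet-by-sheet analysis of boundary arcs is needed. Second, for $k_0>0$ you should not invoke positivity of intersection for an ``approximately holomorphic'' disk $D\times\{z^f\}$---that principle requires both curves to be genuinely $J$-holomorphic for the same $J$. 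The paper works instead with $D\times\{(z')^f\}$, which is genuinely $(J^{1,D})^\Diamond$-holomorphic since $N(z_1)\cap(D\times\{(z')^f\})=\varnothing$, and which is homologous to $D\times\{z^f\}$ rel $\bdry D\times\bs\beta$; then $u_D$ passing through $z_1$ forces $k_0>0$.
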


\begin{proof}
(1) $\deg(\pi_{D}\circ u_D)=2g$, since $u_{\tau_i}$ is a degree $2g$ multisection of $X$ for each $\tau_i$, away from a neighborhood of $z^b_{\tau_i}$. Also, since $\langle u_{\tau_i},B \times \{ (z^f)'\}\rangle=k$ for all $\tau_i$, it follows that $\langle u_D, D\times \{(z^f)'\}\rangle=k_0$, where $0<k_0\leq k$. Here $k_0>0$ since $u_D$ passes through $z_1$.

(2) is a consequence of (1) and computations as in the proof of Lemma~\ref{lemma: ECH index of A}. We remind the reader that the genus of $\Sigma$ is  $2g$.
\end{proof}

\begin{lemma}\label{lemma: B}
$I(u_D)=2g+2$ and $I_{HF}(u_B)=0$. In particular, ${\bf y}={\bf y}'$, $u_B$ consists of $2g$ trivial strips, and $k_0=k=1$.
\end{lemma}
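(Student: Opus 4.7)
The plan is to combine three ingredients: (i) an index additivity formula for the nodal degeneration at $\tau=1$; (ii) the lower bound $I(u_D)\geq 2g+2$ already obtained in Lemma~\ref{lemma: A}(2); and (iii) a nonnegativity bound $I_{HF}(u_B)\geq 0$ coming from the generic regularity of $J$ on the Heegaard Floer side.

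First I would establish the index additivity formula
$$I_{HF}(u_B)+I(u_D)=I_{HF}(u_{\tau_i})+2g=2g+2.$$
The correction $+2g$ reflects the fact that $u_{\tau_i}$ is a $2g$-multisection of $W$, so the matching of $u_B$ with $u_D$ at the nodal fiber $\{w^b\}\times\Sigma=\{-1\}\times\Sigma$ takes place at $2g$ points counted with multiplicity. Tracking the first Chern class, Maslov, and $Q$-contributions through the nodal decomposition and across the change of reference trivializations along $\bs\beta$ yields the stated formula. This is the main technical step and the one that will require the most care.

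Second, granting the additivity, the bound $I(u_D)\geq 2g+2$ from Lemma~\ref{lemma: A}(2), combined with the generic regularity of $J$ on $(W,L_{\bs\alpha}\cup L_{\bs\beta})$ (cf.\ Lemma~I.\ref{P1-lemma: HF regularity} and \cite[Proposition~3.8]{Li}), which forces $I_{HF}(u_B)\geq 0$ with equality iff $u_B$ is a disjoint union of trivial strips, saturates both sides of the additivity equation: $I_{HF}(u_B)=0$ and $I(u_D)=2g+2$.

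Finally, from $I_{HF}(u_B)=0$ I conclude that $u_B$ consists of $2g$ trivial strips; in particular ${\bf y}={\bf y}'$ and $u_B$ has zero algebraic intersection with $\R\times[0,1]\times\{(z')^f\}$. The equation $I(u_D)=2k_0+2g=2g+2$ gives $k_0=1$, and since the total intersection $k$ splits as $k=(k-k_0)+k_0=0+1$, it follows that $k=1$ as well. The hardest part is the index additivity formula in the first step: although it is expected from Taubes's index behavior under symplectic sums (e.g.\ as in Ionel--Parker~\cite{IP2}), reconciling the two ECH-type indices $I_{HF}$ on $W$ and $I$ on $D\times\Sigma$ across the nodal divisor requires careful bookkeeping, and the remaining parts of the argument are then essentially a matter of saturating the two index inequalities.
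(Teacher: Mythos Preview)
Your proposal is correct and follows essentially the same approach as the paper: the paper states the gluing constraint $I_{HF}(u_\tau)=I(u_D)+I_{HF}(u_B)-2g=2$, invokes regularity of $J$ and the index inequality for $I_{HF}(u_B)\geq 0$, and then combines this with Lemma~\ref{lemma: A}(2) exactly as you do. Your explicit justification of $k=1$ via the splitting of intersection numbers between $u_B$ and $u_D$ merely unpacks what the paper summarizes as ``the second sentence is a consequence of the first.''
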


\begin{proof}
The gluing constraints give $I_{HF} (u_\tau)=I (u_D )+I_{HF} (u_B )-2g=2$. By the regularity of $J$ and the index inequality, we have $I_{HF}(u_B)\geq 0$. The first sentence of the lemma then follows from Lemma~\ref{lemma: A}(2); the second sentence is a consequence of the first.
\end{proof}

The first sentence of Theorem~\ref{thm: U-map} follows from the usual construction of chain homotopies in Floer theory.
By Lemma~\ref{lemma: B}, $U_z$ is chain homotopic to $aU$, where $a$ is the count of holomorphic curves $u_D$ in $(D\times \Sigma, J^{D, \Diamond})$ that pass through $z_1$ and ${\bf w}=((1,y_1),\dots,(1,y_{2g}))$, where ${\bf y}=\{y_1,\dots,y_{2g}\}$. Since $a=1$ modulo $2$ by Theorem~\ref{thm: count is one}, $U_z$ is chain homotopic to $U$.

Next we prove the second sentence of Theorem~\ref{thm: U-map}.  For all ${\bf y} \in \mathcal{S}$, $H([{\bf y},0])$ is obtained by counting $I_{HF}=1$ curves that pass through $z_\tau$ for some $\tau \in (0,1)$ and that do not cross the holomorphic strip $\R \times [0,1] \times \{ (z')^f \}$. There are no such curves since  $\R \times [0,1] \times \{ z^f \}$ is holomorphic and homologous to $\R \times [0,1] \times \{ (z')^f \}$: if a curve passes through $z_\tau$, its intersection with $\R \times [0,1] \times \{ z^f \}$ is strictly positive by the positivity of intersections,
and so is its intersection with $\R \times [0,1] \times \{ (z')^f \}$.

\section{The cobordism $X_+$}\label{section: cobordism}

In this section we give the construction of the symplectic cobordism $(X_+,\Omega_{X_+})$ from $[0,1] \times \Sigma$ to $M$, together with the Lagrangian submanifold $L_{\bs\alpha} \subset \partial X_+$.

\subsection{Construction of $(X_+,\Omega_{X_+})$}\label{subsection: W+}

We describe the construction of $X_+$, leaving some key details for later\footnote{Compare with the description in Section~\ref{subsubsection: brief sketch}, keeping in mind that the notation will be slightly different.}: First we construct fibrations $\pi_0: X_+^0\to B^0_+$ and $\pi_1: X_+^1\to D^2$ with fibers diffeomorphic to $\Sigma$ and $S_{1/2}$.  Here $B^0_+ =([0,\infty ) \times \R /2\Z)-B_+^c$ with coordinates $(s,t)$ and $B_+^c$ is the subset $[2,\infty)\times[1,2]$ with the corners rounded. We then glue $X_+^0$ and $X_+^1$ and smooth a boundary component $\mathcal{B}$ of $X_+^0\cup X_+^1$ to obtain $\widetilde{\mathcal{B}}\simeq M$. Finally we attach the negative end $X_+^2=(-\infty,0]\times \widetilde{\mathcal{B}}$ to obtain $X_+$.

Let $\delta>0$ be a small irrational number and $N$ a large positive number which depends on $\delta$ and whose dependence will be described later.

\begin{lemma} \label{properties}
There exists a symplectic manifold $(X_+,\Omega_{X_+})$ which depends on $\delta>0$ and which satisfies the following:
\begin{enumerate}
\item There is a symplectic surface $S_{z^f}:=\{z^f\}\times(B^0_+\cup D^2)$, obtained by gluing sections $\{z^f\}\times B^0_+\subset X_+^0$ and $\{z^f\}\times D^2\subset X_+^1$.
\item $\Omega_{X_+}=d\Theta^+$ for some $1$-form $\Theta^+$ on $X_+-N(S_{z^f})$, where $N(S_{z^f})$ is a small neighborhood of $S_{z^f}$.
\item $\Theta^+$ is exact on the Lagrangian submanifold $L_{\bs\alpha}\subset \bdry X_+$.
\item On the positive end
$$\pi_0^{-1}([3,\infty)\times [0,1])=[3,\infty)\times\Sigma\times [0,1]\subset X_+^0$$
of $X_+$, $\Omega_{X_+}$ restricts to $\widetilde\omega+ds\wedge dt$, where $\widetilde\omega$ is an area form on $\Sigma$. Moreover,
$$L_{\bs \alpha}\cap \{s\geq 3\} = ([3,\infty)\times\{0\}\times \bs\beta')\cup ([3,\infty)\times\{1\}\times\bs\alpha),$$
where $\bs\beta'$ is isotopic to $\bs \beta$.
\item On the negative end $X_+^2$ of $X_+$, $\Omega_{X_+}$ restricts to the negative symplectization of a contact form $\lambda_-$ on $\widetilde{\mathcal{B}} \simeq M$ which is adapted to the open book decomposition $(S,\hh)$.
\item The manifold $\widetilde{\mathcal{B}}\simeq M$ admits a decomposition into three disjoint pieces: the mapping torus $N_{(S_0,\hh_0)}$, where $\hh_0$ is isotopic to $\hh$ relative to $\bdry S_0$, a closed neighborhood $N(K)$ of the binding $K$, and an open thickened torus $\mathcal{N}$ in between that we refer to as the ``no man's land''.
\item All the orbits of the Reeb vector field $R_{\lambda_-}$ of $\lambda_-$ in $int(N(K))\cup\mathcal{N}$ have $\lambda_-$-action $\geq {1\over 2\delta}-\kappa$, where $\kappa>0$ is independent of $\delta$. Moreover, $T_+=\partial N(K)$ (resp.\ $T_-=\bdry N_{(S_0,\hh_0)}$) is a positive (resp.\ negative) Morse-Bott torus of meridian orbits.
\item There is an embedding of $W_+$, defined in Section~I.\ref{P1-acorn}, into $X_+$ such that the restriction $\pi_1: W_+\cap X_+^0\to B^0_+$ is a fibration with fiber $S_0$, $W_+\cap X_+^1=\varnothing$, $W_+\cap X_+^2=[0,\infty)\times N_{(S_0,\hh_0)}$, and $W_+$ is disjoint from $N(S_{z^f})$.
\end{enumerate}
Here $X_+$, $\Omega_{X_+}$, $\Theta^+$, $L_{\bs\alpha}$, and $\lambda_-$ depend on $\delta>0$.
\end{lemma}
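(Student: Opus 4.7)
The plan is to construct $W^+$ as the gluing $W_0^+\cup W_1^+\cup W_2^+$ following the topological sketch in Section~\ref{subsubsection: brief sketch}, equip each piece with a natural symplectic form, then fix a contact form $\lambda_-$ on the smoothed negative boundary $\widetilde{\mathcal{B}}\simeq M$ so that properties (5)--(7) hold. First I would build the two fibrations with fiberwise area forms, then perform the gluings via Thurston's coupling construction, and finally define $\lambda_-$ via an explicit Morse--Bott model on the binding neighborhood, all parametrized by the small $\delta>0$ and the large $N=N(\delta)$.

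Extend $\hh$ to $\hh^+\in\op{Diff}(\Sigma)$ by the identity on $S_{1/2}$ and fix an area form $\widetilde{\omega}$ on $\Sigma$ making $(\bs\alpha,\bs\beta,z^f)$ weakly admissible. Build $\pi_0:W_0^+\to B_+^0$ as the $\hh^+$-mapping torus fibration; Thurston's construction yields a closed $2$-form restricting to $\widetilde{\omega}$ on fibers, which together with a large multiple of $\pi_0^*(ds\wedge dt)$ gives a symplectic form $\Omega_0^+$ that equals $\widetilde{\omega}+ds\wedge dt$ on the product region $\{s\geq 3\}\times\Sigma\times[0,1]$. On $W_1^+=S_{1/2}\times D^2$ take $\widetilde{\omega}|_{S_{1/2}}+\omega_{D^2}$, with a radially symmetric $\omega_{D^2}$ normalized so that the gluing $(0,x,t)\mapsto(x,e^{\pi i t})$ identifies the symplectic forms on the common boundary. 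Property (1) is immediate since $z^f$ is fixed by $\hh^+$, so $\{z^f\}\times\R/2\Z$ and $\{z^f\}\times\bdry D^2$ patch to the closed symplectic section $S_{z^f}$. For (2), use that $\widetilde{\omega}|_{\Sigma-\{z^f\}}$ is exact ($H^2_{dR}(\Sigma-\{z^f\})=0$) to obtain fiberwise primitives, and combine with the exactness of $ds\wedge dt$ to produce a global primitive $\Theta^+$ on $W^+-N(S_{z^f})$; adjust $\Theta^+$ by exact $1$-forms so each component-wise periods vanish on $L_{\bs\alpha}$, giving (3). Property (4) holds by construction.

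After corner-rounding, $\widetilde{\mathcal{B}}=\bdry(W_0^+\cup W_1^+)$ is diffeomorphic to $M$ and decomposes as $N_{(S_0,\hh_0)}\cup\mathcal{N}\cup N(K)$, where $N(K)=\bdry S\times D^2$ is a solid torus neighborhood of the binding and $\mathcal{N}$ is the no-man's-land created by the smoothing. On $N_{(S_0,\hh_0)}$ I would choose a Giroux-type contact form of the shape $\lambda_-=(1+\delta H)\,dt+\delta\beta_S$, with $\beta_S$ a Liouville form on $S_0$ and $H>0$ small, so that $T_-=\bdry N_{(S_0,\hh_0)}$ is a negative Morse--Bott torus of meridians. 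On $N(K)\simeq S^1\times D^2$ use a model $\lambda_-=h_1(r)\,d\phi+h_2(r)\,d\theta$ in coordinates $(\phi,(r,\theta))$ with profile functions whose slope $(-h_1'/h_2')$ is chosen so the Reeb vector field has angular speed of order $\delta^{-1}$ in $\op{int}(N(K))$, producing a positive Morse--Bott torus $T_+$ at $\bdry N(K)$. Interpolating over $\mathcal{N}$ yields a global $\lambda_-$ whose differential matches $\Omega^+|_{\widetilde{\mathcal{B}}}$ in a collar, allowing the symplectic attachment $W_2^+=(-\infty,0]\times\widetilde{\mathcal{B}}$ with $\Omega^+=d(e^s\lambda_-)$, establishing (5) and (6).

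The main obstacle is the simultaneous control of the three requirements on $N(K)\cup\mathcal{N}$: the correct Morse--Bott signs at $T_\pm$, the collar matching $d\lambda_-=\Omega^+|_{\widetilde{\mathcal{B}}}$, and the universal action lower bound $\geq\tfrac{1}{2\delta}-\kappa$ on Reeb orbits in the interior (property (7)). The parameter $\delta$ is the essential tool: rescaling the Reeb dynamics in the binding region by $\delta^{-1}$ forces the action bound, while the profiles $h_1,h_2$ are tuned to preserve the Morse--Bott and collar-matching conditions, with $N=N(\delta)$ chosen large enough so that the multiple of $\pi_0^*(ds\wedge dt)$ dominates on $W_0^+$ and the interpolations succeed. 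Property (8) is then immediate: restrict $\pi_0$ to its $S_0$-subfibration (using $\hh^+|_{S_0}=\hh$) and note that this piece, together with the cylindrical end $[0,\infty)\times N_{(S_0,\hh_0)}$, realizes $W_+$ disjointly from $W_1^+$ (fiber $S_{1/2}$) and from $S_{z^f}\subset\{z^f\}\times D^2$.
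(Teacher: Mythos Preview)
Your outline follows the paper's broad architecture (build $W_0^+$, $W_1^+$, smooth the corner, attach the symplectization end), but it inverts the key logical step and thereby leaves a genuine gap.

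In the paper, the contact form $\lambda_-$ is \emph{not} chosen first and then matched to $\Omega^+$; rather, $\Omega^+$ on $W_0^+\cup W_1^+$ is built from an explicit family of contact forms $\lambda_{+,s}=f_{s,t}\,dt+\beta_t$ (Steps~1--4), with explicit primitives $\Theta_0^+,\Theta_1^+$, and then $\lambda_-$ is \emph{defined} as the pullback $\Theta^+|_{\widetilde{\mathcal{B}}}$ to the smoothed hypersurface (Step~5). With this order, the collar matching is automatic, and properties~(5)--(7) reduce to direct computations of the Reeb vector field from the explicit formulas for $\lambda_-|_{\widetilde{\mathcal{B}}_{01}}$ and $\lambda_-|_{\widetilde{\mathcal{B}}_1}$ (Lemmas~\ref{description1} and~\ref{description2}). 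Your approach---pick a Giroux-type $\lambda_-$ and a model on $N(K)$, then ``tune profiles $h_1,h_2$'' so that $d\lambda_-=\Omega^+|_{\widetilde{\mathcal{B}}}$ in a collar---turns this into a genuine matching problem: you would need to exhibit a Liouville vector field for $\Omega^+$ transverse to $\widetilde{\mathcal{B}}$ whose induced contact form is exactly your $\lambda_-$. You flag this as ``the main obstacle'' but do not resolve it; the assertion that everything can be simultaneously tuned is precisely what requires the paper's explicit construction.

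Two smaller points. First, the parameter $N$ in the paper is not a Thurston scaling constant; it is the width of the annulus $A_{[0,N]}\subset S_{1/2}$, chosen large (depending on $\delta$) so that the corner-rounding function $\zeta$ in Lemma~\ref{description2} can be arranged with $-\phi'/\zeta'\equiv\delta$. Your use of ``a large multiple of $\pi_0^*(ds\wedge dt)$'' is both unnecessary (the paper's $\widetilde\omega$ is already closed on $W_0^+$ and $\widetilde\omega+ds\wedge dt$ is symplectic by the bound~(f) on $|d_2 f_{s,t}|$) and in tension with property~(4), which fixes the coefficient of $ds\wedge dt$ to be~$1$. Second, for property~(3) the paper does not ``adjust $\Theta^+$ by exact $1$-forms'': it arranges $\bs\alpha\times\{1\}$ to be Legendrian for $\lambda_+$ (via Legendrian realization), defines $L_{\bs\alpha}$ by symplectic parallel transport, and then observes $\Theta^+|_{L_{\bs\alpha}\cap\pi^{-1}(3,1)}=0$, which gives exactness directly.
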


The $S^1$-family $\mathcal{P}_+$ (resp.\ $\mathcal{P}_-$) of simple orbits of $T_+$ (resp.\ $T_-$) can be viewed equivalently as a pair $e'$, $h'$ (resp.\ $e$, $h$) consisting of an elliptic orbit and a hyperbolic orbit. The proof of Lemma~\ref{properties} will be given in Section~\ref{subsection: proof of properties}.

Let $A_{[-1,N]}\simeq [-1,N] \times S^1$ be a small neighborhood of $\bdry S_0 =\{ 0\} \times S^1$ in $\Sigma$ with coordinates $(r_1,\theta_1)$, such that $z^f \notin A_{[-1,N]}$, $A_{[-1,0]}\subset S_0$ and $A_{[0,N]}\subset S_{1/2}$. Here we write $A_\mathfrak{I}=\mathfrak{I}\times S^1$ if $\mathfrak{I}$ is a subset of $[-1,N]$.  Also let $N(z^f) \subset S_{1/2} - A_{[0,N]}-\bs\alpha -\bs\beta$ be a small ball $D_\tau=\{r'\leq \tau\}$ about $z^f$, where we are using polar coordinates $(r',\theta')$.

The actual construction of $(X_+, \Omega_{X_+})$ is a bit involved, and consists of several steps.

\s\n {\bf Step 1.}
The following lemma is a rephrasing of Lemma~I.\ref{P1-lemma: change from h to h_0} and its proof.

\begin{lemma}
After possibly isotoping $\hh$ relative to $\bdry S_0$, there exists a factorization $\hh=\hh_0\circ \hh_1$ and a contact form $\lambda= f_t(x)dt+\beta_t(x)$, $(x,t)\in S_0\times [0,2]$, on $N_{(S_0,\hh_0)}$ with Reeb vector field $R_{\lambda}$, such that the following hold:
\begin{enumerate}
\item $\hh:S_0\times\{0\}\stackrel\sim\to S_0\times\{0\}$ is the first return map of $R_{\lambda}$.
\item $\hh$ has no elliptic periodic point of period $\leq 2g$ in $int(S_0 )$, as required for technical reasons
in II.\ref{P2-thm: isomorphism}.
\item $\hh_0=id$ on $A_{[-1/2,0]}$.
\item $\hh_1$ is the flow of $R_{\lambda}$ from $S_0\times\{0\}$ to $S_0\times\{2\}$.
\item $R_{\lambda}$ is parallel to $\bdry_t$ on $(S_0-A_{[-1,0]})\times [0,2]$. In particular, $\hh_1=id$ on $S_0-A_{[-1,0]}$.
\item $f_t(r_1,\theta_1)=1+\varepsilon r_1^2/2$ and $\beta_t(r_1,\theta_1)=(C+r_1)d\theta_1$ on $A_{[-1/2,0]}$, for $\varepsilon>0$ sufficiently small and $C>0$. In particular, $f_t$ and $\beta_t$ are independent of $t$ and $R_{\lambda}$ is parallel to $\bdry_t -\varepsilon r_1\bdry_\theta$ on $A_{[-1/2,0]}$.
\item $|d_2 f_t|_{A_{[-1/2,0]}}|_{C^0}\leq \delta$ and ${1\over 2}\leq f_t\leq 2$.
\end{enumerate}
Here $\varepsilon>0$ depends on $\delta>0$, $d_2$ is the differential in the $S_0$-direction, and the $C^0$-norm is with respect to a fixed Riemannian metric on $S_0$.
\end{lemma}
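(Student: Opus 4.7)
The plan is to apply Lemma~I.\ref{P1-lemma: change from h to h_0} essentially verbatim, repackaging its conclusions in the notation used here. The original result constructs, after a small isotopy of $\hh$ relative to $\bdry S_0$, a contact form $\lambda = f_t(x)\,dt + \beta_t(x)$ on a mapping torus $N_{(S_0,\hh_0)}$ whose first return map is isotopic to $\hh$, together with a factorization $\hh = \hh_0\circ \hh_1$ in which $\hh_1$ is the time-$2$ flow of $R_\lambda$ from $S_0\times\{0\}$ to $S_0\times\{2\}$ and $\hh_0$ is the ``bulk'' monodromy, normalized so that $\hh_0=id$ on the collar $A_{[-1/2,0]}$. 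Since $R_\lambda=\bdry_t$ on $(S_0-A_{[-1,0]})\times[0,2]$, the map $\hh_1$ is supported in $A_{[-1,0]}$ and accounts for the Dehn-twist contribution near the binding.

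For items (6)--(7), I would verify directly that on $A_{[-1/2,0]}$, with $f_t = 1+\varepsilon r_1^2/2$ and $\beta_t = (C+r_1)\,d\theta_1$, one has $d\lambda = dr_1\wedge(d\theta_1 + \varepsilon r_1\,dt)$, so the equations $\iota_{R_\lambda}\lambda=1$ and $\iota_{R_\lambda}d\lambda=0$ force $R_\lambda$ to be proportional to $\bdry_t - \varepsilon r_1\bdry_\theta$ as claimed. The bound $|d_2 f_t|_{C^0}\leq \delta$ then holds provided $\varepsilon$ is chosen small enough relative to $\delta$ and the size of the collar, and ${1\over 2}\leq f_t\leq 2$ is automatic for small $\varepsilon$. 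Away from $A_{[-1,0]}$, the extensions of $f_t$ and $\beta_t$ are arranged to keep $R_\lambda$ parallel to $\bdry_t$ (item 5), which forces $f_t$ and $\beta_t$ to be $t$-independent there.

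For item (2), a small generic perturbation of $\hh$ supported in $\op{int}(S_0)$ removes elliptic periodic points of period $\leq 2g$; this perturbation does not disturb the collar normalizations of items (3), (5), and (6), and is already performed in \cite{CGH2}. The main technical obstacle, which is the simultaneous realization of the prescribed normal form near the binding, the correct monodromy $\hh$, and the absence of short-period elliptic orbits, is resolved in \cite{CGH2} by interpolating between model contact forms on the binding neighborhood, the collar $A_{[-1,-1/2]}$, and the remainder of $S_0\times[0,2]$. Since no new difficulty is introduced by the reformulation above, the lemma follows by directly unpacking the conclusions of Lemma~I.\ref{P1-lemma: change from h to h_0}.
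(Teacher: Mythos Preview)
Your proposal is correct and matches the paper's own treatment: the paper states explicitly that this lemma ``is a rephrasing of Lemma~I.\ref{P1-lemma: change from h to h_0} and its proof,'' and your unpacking of items (1)--(7) from that source is accurate. The additional verifications you supply for the Reeb vector field on $A_{[-1/2,0]}$ and the bounds in (7) are more detail than the paper gives, but they are straightforward and correct.
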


\s\n
{\bf Step 2.} We then extend $\hh_0,\hh_1,\hh\in \mbox{Diff}(S_0,\bdry S_0)$ to $\hh_0^+,\hh_1^+,\hh^+=\hh^+_0\circ \hh^+_1\in\mbox{Diff}(\Sigma)$ and the contact form $\lambda$ to the contact form  ${\lambda_+}=f_tdt+\beta_t$ to $N_{(\Sigma-N(z^f),\hh_0^+)}$, all of which depend on $\delta>0$, as follows:
\begin{itemize}
\item[(3')] $\hh_0^+=id$ on $S_{1/2}$.
\item[(4')] $\hh_1^+|_{\Sigma-N(z^f)}$ is the flow of $R_{\lambda_+}$ from $(\Sigma-N(z^f))\times\{0\}$ to $(\Sigma-N(z^f))\times\{2\}$ and $\hh_1^+|_{N(z^f)}=id$.
\item[(5')] $f_t$ and $\beta_t$ are independent of $t$ on $S_{1/2}-N(z^f)$. Hence $R_{\lambda_+}$ is parallel to $\bdry_t+ X_f$, where $X_f$ is the Hamiltonian vector field satisfying $i_{X_f}\omega= d_2f$ and $\omega$ is an area form on $\Sigma$ which agrees with $d_2\beta_t$ on $\Sigma-N(z^f)$.
\item[(6a')] $f_t(r',\theta')=const>0$ and $\beta_t(r',\theta')=(-C'+r')d\theta'$ near $\bdry N(z^f)$, for $-C'>0$. In particular, $R_{\lambda_+}$ is parallel to $\bdry_t$ near the mapping torus of $\bdry N(z^f)$.
\item[(6b')] $f_t(r_1,\theta_1)=1+\varepsilon r_1^2/2$ near $A_{\{0\}}$ and $\beta_t(r_1,\theta_1)=(C+r_1)d\theta_1$ on $A_{[0,N]}$.
\item[(7')] $|d_2 f_t|_{S_{1/2}-N(z^f)}|_{C^0}\leq \delta$ and ${1\over 2}\leq f_t|_{S_{1/2}-N(z^f)}\leq 2$.
\end{itemize}

Without loss of generality we may assume that $\bs\alpha\times\{1\}$ is Legendrian with respect to ${\lambda_+}$. This is an easy consequence of the Legendrian realization principle; see for example \cite[Theorem~3.7]{H}.

\s\n {\bf Step 3} (Construction of $(X_+^0,\Omega_{X_+}^0)$).
Let
$$\widetilde{X}_+^0=([0,\infty)\times\Sigma\times [0,2])/(s,x,2)\sim (s,\hh^+_0(x),0)$$ and let
$\pi_0: \widetilde{X}_+^0 \to [0,\infty)\times \R/2\Z$ be the projection $(s,x,t)\mapsto (s,t)$. We then set
$$X_+^0:= \pi_0^{-1}(B_+^0).$$

Let $g: [0,{1\over 2}]\to \R$ be a smooth function such that $g(r)=1+\varepsilon r^2/2$ near $r=0$, $0<g'(r)\leq \delta$ for $r\in(0,{1\over 2})$, $g'({1\over 2})=0$, and $g({1\over 2})=1+\varepsilon$. In particular, this requires $2\varepsilon<\delta$. Then let
$$\lambda_{+,s}=f_{s,t}dt+\beta_t,\quad s\in [0,\infty),$$ be a $1$-parameter family of contact forms\footnote{Note that $\beta_t$ does not depend on $s$.} on $N_{(\Sigma-N(z^f),\hh_0^+)}$ such that the following hold:
\begin{itemize}
\item[(a)] $\lambda_{+,s}={\lambda_+}$ if $s\geq {3\over 2}$ or $(x,t)\in N_{(S_0,\hh_0)}$.
\item[(b)] $\lambda_{+,s}$ is independent of $s$ if $s\in[0,{1\over 2}]$.
\item[(c)] $f_{0,t}(r_1,\theta_1)=g(r_1)$ on $A_{[0,1/2]}$.
\item[(d)] $f_{0,t}|_{S_{1/2}-A_{[0,1/2]}-N(z^f)}=1+\varepsilon$.  In particular, $d\lambda_{+,0}= d_2\beta_t$ and $R=\bdry_t$ on the mapping torus of $S_{1/2}-A_{[0,1/2]}-N(z^f)$.
\item[(e)] $f_{s,t}$ is a constant $C_s>0$ near $\bdry N(z^f)$.
\item[(f)] $|d_2 f_{s,t}|_{A_{[-1/2,0]}\cup S_{1/2}-N(z^f)}|_{C^0}\leq \delta$, $|\bdry_s f_{s,t}|_{A_{[-1/2,0]}\cup S_{1/2}-N(z^f)}|_{C^0}\leq \delta$  and ${1\over 2}\leq f_{s,t}|_{\Sigma-N(z^f)}\leq 2$ for all $s,t$.
\end{itemize}

We then define:
$$\Omega_{X_+}^0:= \widetilde\omega +ds\wedge dt,$$
where
$$\widetilde\omega=\left\{
\begin{array}{cl}
d\lambda_{+,s} & \mbox{on }~ X_+^0 - (N(z^f)\times B^0_+);\\
\omega & \mbox{on }~ N(z^f)\times B^0_+;
\end{array}
\right.$$
and $\omega$ is an area form on $\Sigma$ which agrees with $d_2\beta_t$ on $\Sigma-N(z^f)$. The $2$-form $\Omega_{X_+}^0$ is symplectic by an easy calculation which uses (f).

\s\n {\bf Step 4} (Construction of $(X^1_+,\Omega_{X_+}^1)$ and primitives $\Theta_0^+,\Theta_1^+$).
Let
$$X^1_+: =S'_{1/2} \times D^2, \quad S'_{1/2}: =S_{1/2}-A_{[0,1/2]}.$$
We use polar coordinates $(r_2,\theta_2)$ on $D^2=\{r_2\leq 1\}$. We identify neighborhoods of $\{0\}\times S'_{1/2} \times \R/2\Z \subset \bdry X_+^0$ and $S'_{1/2}\times \bdry D^2\subset \bdry X_+^1$ as follows:
\begin{align*}
\phi_{01}:[-\varepsilon',\varepsilon']\times S'_{1/2}\times \R/2\Z & \stackrel\sim\to S'_{1/2}\times \{(r_2,\theta_2)~|~ e^{-\pi \varepsilon'}\leq r_2\leq e^{\pi\varepsilon'}\},\\
(s,x,t) & \mapsto (x,e^{\pi s},\pi t),
\end{align*}
where $\varepsilon'>0$ is sufficiently small.

Let $\omega_{D^2}$ be an area form on $D^2$ satisfying:
$$\omega_{D^2}= \left\{  \begin{array}{cl}
r_2dr_2d\theta_2 & \mbox{near }~ r_2=0;\\
{1\over \pi^2 r_2} dr_2d\theta_2 & \mbox{near }~ r_2=1.
\end{array}\right.$$
We then define
$$\Omega_{X_+}^1: = \widetilde\omega|_{S'_{1/2}} + \omega_{D^2}.$$
An easy calculation shows that $\omega_{D^2}=ds\wedge dt$, and hence $\Omega_{X_+}^1=\Omega_{X_+}^0$, on their overlap.

We write $\omega_{D^2}=d(\phi(r_2) d\theta_2)$, where $\phi: [0,1]\to \R$ satisfies
$$\phi(r_2)= \left\{ \begin{array} {cl}
r_2^2/ 2 & \mbox{near }~ r_2=0;\\
{1\over \pi^2}\log r_2 + {1\over 10} & \mbox{near }~ r_2=1.
\end{array}\right.$$
Then $\phi(r_2)d\theta_2=(s+{\pi\over 10}) dt$ on their overlap. The choice of the constant ${\pi\over 10}<1$ will be used
in the proof of Lemma~\ref{lemma: energy bound}. We then define primitives $\Theta_i^+$ of $\Omega_{X_+}^i$, $i=0,1$, as follows:
\begin{align}
\label{primitive 1}
\Theta_0^+ & = (s+\pi/10)dt +\lambda_{+,s} ~\mbox{ on }~  X^0_+-(N(z^f)\times B^0_+);\\ \label{primitive 2}
\Theta_1^+ & = \phi(r_2) d\theta_2 +\lambda_{+,0} ~\mbox{ on }~ X^1_+ -(N(z^f)\times D^2).
\end{align}
We have $\Theta_0^+=\Theta_1^+$ on their overlap.

\s\n {\bf Step 5} (Corner smoothing).
We now have a $4$-manifold $X^0_+ \cup X^1_+$ with a concave corner along $(\partial S'_{1/2} ) \times \partial D^2$. The component $\mathcal{B}$ of $\partial (X_+^0\cup X_+^1)$ that contains the corner is homeomorphic to $M$ and $(\partial S'_{1/2} )\times D^2$ is a neighborhood of the binding $(\partial S'_{1/2} )\times \{ 0\}$.

In this step we round the corner of $\mathcal{B}$ to obtain the smoothing $\widetilde{\mathcal{B}}\subset X_+^0\cup X_+^1$.  We write $\widetilde{\mathcal{B}}_i= \widetilde{\mathcal{B}}\cap X_+^i$, $i=0,1$. We define the contact form $\lambda_-$ on $\widetilde{\mathcal{B}}$ so that  $\lambda_-|_{\widetilde{\mathcal{B}}_i}=\Theta_i^+|_{\widetilde{\mathcal{B}}_i}$, $i=0,1$. Here the notation $|_A$ refers to the pullback to $A$. See Figure~\ref{figure: rounding}.

\begin{figure}[ht]
\begin{center}
\psfragscanon
\psfrag{A}{\small $W_0^+$}
\psfrag{B}{\small $W_1^+$}
\psfrag{C}{\small $\mathcal{B}$}
\psfrag{D}{\small $\widetilde{\mathcal{B}}_1$}
\psfrag{E}{\small $\widetilde{\mathcal{B}}_{01}$}
\psfrag{F}{\small $\widetilde{\mathcal{B}}_{00}$}
\psfrag{a}{\small $r_1$}
\psfrag{b}{\small $r_2$}
\psfrag{c}{\small $s$}
\psfrag{1}{\small $0$}
\psfrag{2}{\small $\varepsilon_1$}
\psfrag{3}{\small ${1\over 2}-\varepsilon_0$}
\psfrag{4}{\small ${1\over 2}$}
\psfrag{5}{\small ${1\over 2}+\varepsilon_0$}
\psfrag{6}{\small $k_0$}
\psfrag{7}{\small $1$}
\includegraphics[width=8cm]{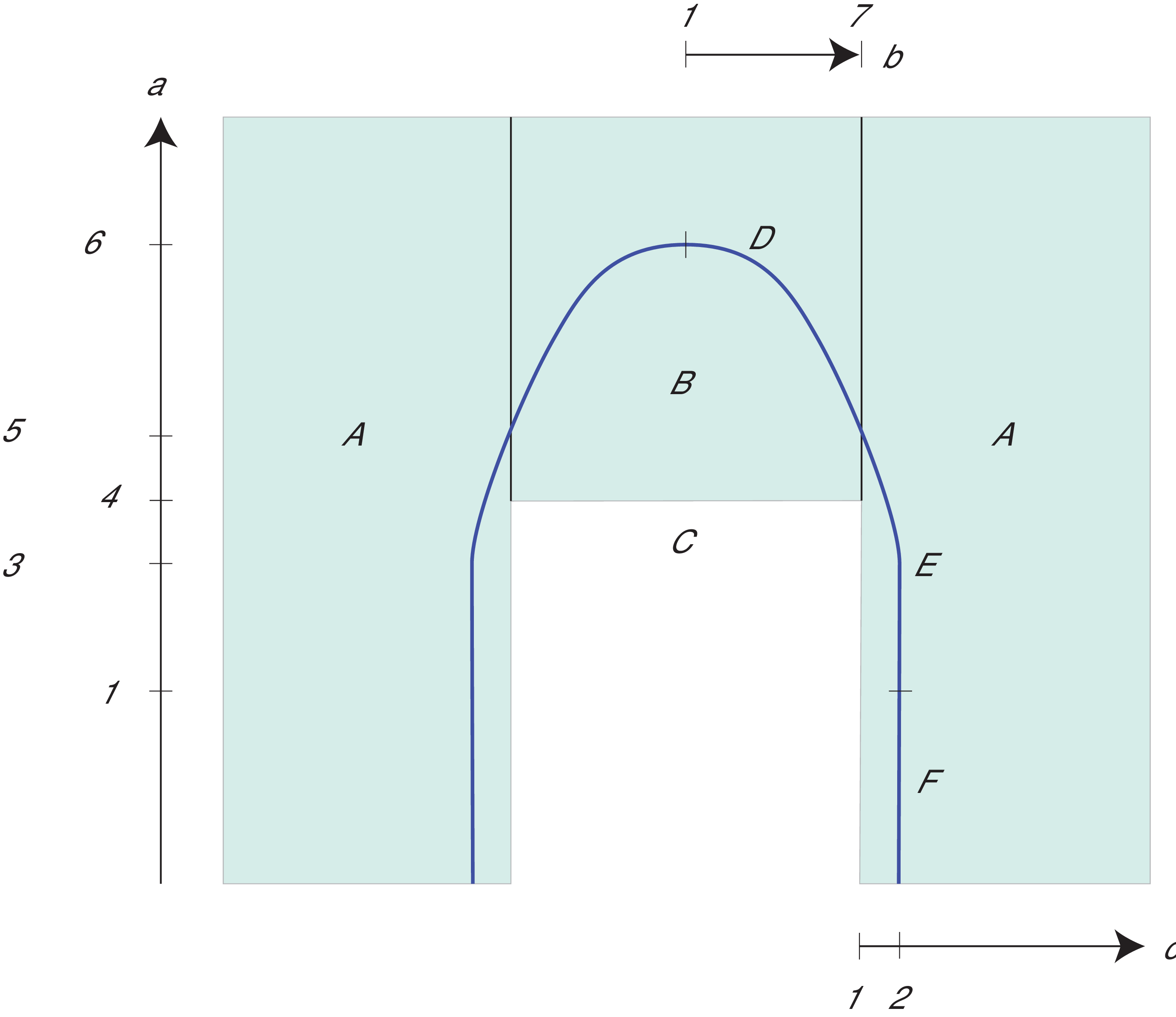}
\end{center}
\caption{Schematic diagram for rounding the corner of $\mathcal{B}$. The diagram shows a neighborhood $N(\mathcal{B})$ of $\mathcal{B}$, where we are projecting $X^0_+\cap N(\mathcal{B})$ to coordinates $(s,r_1)$ and $X^1_+\cap N(\mathcal{B})$ to coordinates $(r_0,r_1)$. }
\label{figure: rounding}
\end{figure}

\s\n
{\em Construction of $\widetilde{\mathcal{B}}_0$.}
There exist $\varepsilon_0,\varepsilon_1>0$ small with ${\varepsilon_1\over 2 \varepsilon_0}<\delta$ and $\varepsilon_1< \varepsilon'$ and a smooth map $\psi: [0,{1\over 2}+\varepsilon_0]\to \R$ such that:
\begin{itemize}
\item $\psi(r_1)=\varepsilon_1$ on $[0,{1\over 2}-\varepsilon_0]$;
\item $-\delta\leq \psi'(r_1)\leq 0$ on $[{1\over 2}-\varepsilon_0, {1\over 2}+\varepsilon_0]$; and
\item $\psi({1\over 2}+\varepsilon_0)=0$ and $\psi'({1\over 2}+\varepsilon_0)=-\delta$.
\end{itemize}
We then let $\widetilde{\mathcal{B}}_0= \widetilde{\mathcal{B}}_{00}\cup \widetilde{\mathcal{B}}_{01}$, where
\begin{align*}
\widetilde{\mathcal{B}}_{00}& = \{s=\varepsilon_1\}\times N_{(S_0,\hh_0)}\\
\widetilde{\mathcal{B}}_{01}& = \{ s=\psi(r_1), r_1\in[0,1/2+\varepsilon_0]\}\times \R/2\Z\times S^1.
\end{align*}
Here $\R/2\Z\times S^1$ has coordinates $(t,\theta_1)$.

\begin{lemma} \label{description1}
There exists $r_1^*\in (0,{1\over 2}+\varepsilon_0)$ such that each orbit in $ \widetilde{\mathcal{B}}_{01}\cap \{r_1\not=r_1^*\}$ is directed by some $\bdry_t+ \delta' \bdry_{\theta_1}$, where $0<\delta'\leq \delta$ or $-\delta\leq \delta'<0$, and $\widetilde{\mathcal{B}}_{01}\cap \{r_1=1+\varepsilon_0\}$ is directed by $\bdry_t+\delta\bdry_{\theta_1}$.
\end{lemma}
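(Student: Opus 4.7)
The plan is to pull back the primitive $\Theta_0^+$ from Step 4 to the graphical piece $\widetilde{\mathcal{B}}_{01}=\{s=\psi(r_1)\}\times\R/2\Z\times S^1$, obtain an explicit contact form in coordinates $(r_1,t,\theta_1)$, compute its Reeb direction, and read off the slope $\delta'$ from the data of $\psi$, $g$, and the contact form ${\lambda_+}$.

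First I would note that since $0\le\psi(r_1)\le\varepsilon_1<\tfrac12$, condition (b) of Step~3 gives $\lambda_{+,\psi(r_1)}=\lambda_{+,0}$, so on $\widetilde{\mathcal{B}}_{01}$ we have
\[
\Theta_0^+\big|_{s=\psi(r_1)} \;=\; \bigl(\psi(r_1)+\tfrac{\pi}{10}+f_{0,t}(r_1,\theta_1)\bigr)\,dt + \beta_t(r_1,\theta_1).
\]
On $A_{[-1,N]}$ the form $\beta_t$ equals $(C+r_1)\,d\theta_1$ by (6b'), and by (c)--(d) of Step~3 combined with the construction of $g$, the function $f_{0,t}$ depends only on $r_1$: it equals $g(r_1)$ for $r_1\in[0,\tfrac12]$ and equals the constant $1+\varepsilon$ for $r_1\in[\tfrac12,\tfrac12+\varepsilon_0]$. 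Writing $F(r_1)=\psi(r_1)+\tfrac{\pi}{10}+f(r_1)$ and $G(r_1)=C+r_1$, the pulled-back form is
\[
\lambda_- \;=\; F(r_1)\,dt + G(r_1)\,d\theta_1.
\]

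Next I would compute the Reeb direction. A vector field $R=a\partial_t+b\partial_{\theta_1}+c\partial_{r_1}$ lies in $\ker d\lambda_-$ precisely when $c=0$ (since $G'\equiv 1\neq 0$) and $aF'(r_1)+bG'(r_1)=0$, i.e.\ $b=-aF'(r_1)$. Hence $R_{\lambda_-}$ is parallel to $\partial_t - F'(r_1)\,\partial_{\theta_1}$, giving the slope $\delta'=-F'(r_1)$ in the notation of the lemma. The bound $|\delta'|\le\delta$ will then follow from the bounds $0<g'\le\delta$ on $(0,\tfrac12)$ and $-\delta\le\psi'\le 0$ on $[\tfrac12-\varepsilon_0,\tfrac12+\varepsilon_0]$.

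Finally I would analyze the sign of $F'(r_1)=\psi'(r_1)+f'(r_1)$ in three subregions. On $[0,\tfrac12-\varepsilon_0]$, $\psi'=0$ and $f'=g'>0$, so $\delta'=-g'(r_1)\in[-\delta,0)$. On $[\tfrac12,\tfrac12+\varepsilon_0]$, $f'=0$ and $\psi'\le 0$; by choosing $\psi$ with $\psi'<0$ strictly on this interval we get $\delta'=-\psi'(r_1)\in(0,\delta]$, and at the endpoint $r_1=\tfrac12+\varepsilon_0$ the specified value $\psi'=-\delta$ yields exactly $\delta'=\delta$, as claimed (up to the evident typo). On the transition interval $[\tfrac12-\varepsilon_0,\tfrac12]$, $F'(\tfrac12-\varepsilon_0)=g'(\tfrac12-\varepsilon_0)>0$ while $F'(\tfrac12)=\psi'(\tfrac12)<0$, so by the intermediate value theorem $F'$ vanishes somewhere inside. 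The main (and only) obstacle is to arrange a \emph{unique} zero $r_1^*$: I would choose $g$ so that $g'$ is strictly decreasing on $[\tfrac12-\varepsilon_0,\tfrac12]$ (compatible with $g'(\tfrac12)=0$) and $\psi$ so that $\psi'$ is (weakly) decreasing there, which makes $F'=\psi'+g'$ strictly decreasing and forces a single transversal zero $r_1^*\in(\tfrac12-\varepsilon_0,\tfrac12)$. Outside $r_1^*$, $\delta'\neq 0$ with $|\delta'|\le\delta$, completing the proof.
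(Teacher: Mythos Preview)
Your argument is correct and follows essentially the same route as the paper: pull back $\Theta_0^+$ to obtain $\lambda_-|_{\widetilde{\mathcal{B}}_{01}}=(\psi(r_1)+f_{0,t}+\pi/10)\,dt+(C+r_1)\,d\theta_1$, observe that the Reeb vector field is parallel to $\partial_t-\tfrac{\partial}{\partial r_1}(\psi+f_{0,t})\,\partial_{\theta_1}$, and read off the slope. You supply more detail than the paper does---in particular, you make explicit the additional monotonicity choices on $g'$ and $\psi'$ near $r_1=\tfrac12$ needed to force a \emph{unique} zero $r_1^*$, whereas the paper simply asserts ``let $r_1^*$ be the point where $\tfrac{\partial}{\partial r_1}(\psi+f_{0,t})=0$''; your observation about the endpoint typo is also apt. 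One minor slip: on $[0,\tfrac12-\varepsilon_0]$ you write $g'>0$, but in fact $g'(0)=0$ since $g(r)=1+\varepsilon r^2/2$ near $r=0$; this is harmless because $r_1=0$ is precisely the Morse--Bott torus $T_-$ where $\delta'=0$ is expected (and the paper's own proof is equally loose at this endpoint).
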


\begin{proof}
The $1$-form $\lambda_-|_{\widetilde{\mathcal{B}}_{00}}$ is clearly a contact form and
\begin{equation}\label{lambda 01}
\lambda_-|_{\widetilde{\mathcal{B}}_{01}}=(\psi(r_1)+ f_{0,t}(r_1,\theta_1)+\pi/ 10) dt +(C+r_1)d\theta_1,
\end{equation}
with respect to coordinates $(r_1,\theta_1,t)$. The Reeb vector field $R_{\lambda_-}$ is parallel to $\bdry_t - {\bdry\over \bdry r_1}(\psi+f_{0,t})\bdry_{\theta_1}$.
Let $r_1^*\in[0,{1\over 2}+\varepsilon_0]$ be the point where ${\bdry \over \bdry r_1}(\psi+ f_{0,t})=0$. Then $0< -{\bdry\over \bdry r_1}(\psi+f_{0,t})\leq \delta$ for $r_1\in [r_1^*,{1\over 2}+\varepsilon_0]$, $-{\bdry\over \bdry r_1}(\psi+f_{0,t})({1\over 2}+\varepsilon_0)=\delta$, and $0<{\bdry\over \bdry r_1}(\psi+f_{0,t})\leq \delta$ for $r_1\in (0,r_1^*)$, which imply the lemma.
\end{proof}

\s\n
{\em Construction of $\widetilde{\mathcal{B}}_1$.} Let $\zeta:[0,1]\to \R$ be a smooth map such that:
\begin{itemize}
\item $\zeta(r_2)=k_0-k_1 r_2^2/2$ near $r_2=0$, where $k_0,k_1\gg 0$;
\item $\zeta''<0$ on $(0,1]$;
\item $\zeta(1)= {1\over 2}+\varepsilon_0$.
\end{itemize}
We then define $\widetilde{\mathcal{B}}_1= \{ r_1= \zeta(r_2)\}$.

\begin{lemma}\label{description2}
There exist $k_0,k_1\gg 0$, $N=N(k_0,k_1)\gg 0$, and $\zeta$ such that $R_{\lambda_-}|_{\widetilde{\mathcal{B}}_1}$ is directed by $\pi\bdry_{\theta_2}+\delta \bdry_{\theta_1}$.
\end{lemma}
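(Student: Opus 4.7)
The plan is to compute $\lambda_-|_{\widetilde{\mathcal{B}}_1}$ explicitly, read off the Reeb vector field, and then choose $\zeta$ so that the Reeb direction takes the prescribed form. Work in coordinates $(r_1,\theta_1,r_2,\theta_2)$ on the neighborhood $A_{[1/2,N]}\times D^2$ of the binding inside $W_1^+$, with $\widetilde{\mathcal{B}}_1 = \{r_1 = \zeta(r_2)\}$. Since $\widetilde{\mathcal{B}}_1$ sits over the annulus $A_{[1/2+\varepsilon_0,\, k_0]}\subset S'_{1/2}$ where $\hh_0^+ = id$ (property (3')), $f_{0,t} = 1+\varepsilon$ by (d) of Step 3, and $\beta_t = (C+r_1)d\theta_1$ by (6b') of Step 2 are all $t$-independent, the form $\lambda_{+,0}$ extends naturally to $W_1^+$ via the overlap identification $t=\theta_2/\pi$, yielding
$$\Theta_1^+ = \left(\phi(r_2)+\frac{1+\varepsilon}{\pi}\right)d\theta_2 + (C+r_1)d\theta_1$$
on this region. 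Restricting to $\widetilde{\mathcal{B}}_1$ gives
$$\lambda_- = \left(\phi(r_2)+\frac{1+\varepsilon}{\pi}\right)d\theta_2 + (C+\zeta(r_2))d\theta_1,$$
so $d\lambda_- = \phi'(r_2)\,dr_2\wedge d\theta_2 + \zeta'(r_2)\,dr_2\wedge d\theta_1$.

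Next, writing $R = a\bdry_{\theta_1}+b\bdry_{\theta_2}$, the condition $i_R d\lambda_- = 0$ is equivalent to $a\zeta'(r_2)+b\phi'(r_2)=0$. The requirement $R\parallel \pi\bdry_{\theta_2}+\delta\bdry_{\theta_1}$, i.e.\ $a/b = \delta/\pi$, becomes the first-order ODE
$$\zeta'(r_2) = -\frac{\pi}{\delta}\,\phi'(r_2),$$
whose solution is $\zeta(r_2) = k_0 - (\pi/\delta)\phi(r_2)$. The two constants are then fixed by matching the boundary data: near $r_2 = 0$, $\phi(r_2) = r_2^2/2$ gives $\zeta(r_2) = k_0 - (\pi/(2\delta))r_2^2$, forcing $k_1 = \pi/\delta$; at $r_2 = 1$, $\phi(1) = 1/10$ together with $\zeta(1) = 1/2+\varepsilon_0$ forces $k_0 = 1/2+\varepsilon_0+\pi/(10\delta)$. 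For $\delta\ll 1$ both $k_0$ and $k_1$ are large, as required. Choosing $N\geq k_0$ ensures $A_{[0,k_0]}\subset A_{[0,N]}\subset S_{1/2}$, which is the dependence $N=N(k_0,k_1)$; and $\phi'>0$ gives $\zeta'<0$, so $\zeta$ is strictly decreasing.

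The main subtle point is reconciling the stipulated concavity $\zeta''<0$ on $(0,1]$ with the ODE, which forces $\zeta'' = -(\pi/\delta)\phi''$. The prescribed $\phi(r_2) = (1/\pi^2)\log r_2 + 1/10$ near $r_2 = 1$ (dictated by the matching $\omega_{D^2} = ds\wedge dt$ under $\phi_{01}$) gives $\phi''(r_2) = -1/(\pi^2 r_2^2)<0$ there. This is handled by exploiting the flexibility to choose $\omega_{D^2}$ on the interior of $D^2$ so that $\phi''>0$ on $(0,1]$ away from a thin collar of $r_2=1$; the ODE preserves the Reeb direction throughout, and the consistency with Lemma~\ref{description1} at the interface $r_2 = 1$, where $\pi\bdry_{\theta_2}+\delta\bdry_{\theta_1}$ becomes $\bdry_t+\delta\bdry_{\theta_1}$ under $\theta_2 = \pi t$, ensures that orbits on $\widetilde{\mathcal{B}}_{01}$ and $\widetilde{\mathcal{B}}_1$ glue correctly across the corner smoothing.
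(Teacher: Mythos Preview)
Your argument is correct and follows the same route as the paper: compute $\lambda_-|_{\widetilde{\mathcal{B}}_1}$, read off that the Reeb direction is proportional to $\pi\partial_{\theta_2}-(\pi\phi'/\zeta')\partial_{\theta_1}$, and then choose $\zeta$ to satisfy the resulting ODE (the paper records the coefficient as $-\phi'/\zeta'$, a harmless normalization discrepancy). Your last paragraph correctly notices a minor tension in the setup---the ODE forces $\zeta''>0$ near $r_2=1$, contrary to the listed hypothesis $\zeta''<0$---but your proposed fix does not actually restore $\zeta''<0$ on all of $(0,1]$; since that concavity condition is never used elsewhere, the cleanest resolution is simply to replace it by $\zeta'<0$ on $(0,1]$.
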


\begin{proof}
$\lambda_-|_{\widetilde{\mathcal{B}}_1}$ is given by
\begin{equation}\label{lambda 1}
\lambda_-|_{\widetilde{\mathcal{B}}_1}=(\phi(r_2)+(1+\varepsilon)/\pi)d\theta_2 + (C+\zeta(r_2))d\theta_1,
\end{equation}
with respect to coordinates $(\theta_1,r_2,\theta_2)$.  The Reeb vector field $R_{\lambda_-}$ is parallel to $\pi\bdry_{\theta_2} -{\phi'\over \zeta'}\bdry_{\theta_1}$. By choosing $k_0,k_1\gg 0$, $N(k_0,k_1)\gg 0$, and $\zeta$ suitably, we may assume that $-{\phi'\over \zeta'}(r_2)=\delta$ for all $r_2\in(0,1]$.
\end{proof}

We also define $N(K)\subset \widetilde{\mathcal{B}}$ as the closed neighborhood of the binding $K=\{r_2=0\}$ that is bounded by the torus $\{r_1=r_1^*\}$. The region $\mathcal{N}=\{0<r_1< r_1^*\}\subset \widetilde{\mathcal{B}}$ will be called ``no man's land''.

\s\n {\bf Step 6} (Construction of $(X_+^2,\Omega_{X_+}^2)$).
Let $X_+^{01}\subset X_+^0\cup X_+^1$ be the closure of the component of $(X_+^0\cup X_+^1)-\widetilde{\mathcal{B}}$ that does not contain $\mathcal{B}$.  We then glue the negative cylindrical end
$$(X_+^2,\Omega_{X_+}^2): =((-\infty ,0] \times \widetilde{\mathcal{B}},d(e^s\lambda_-))$$ to $X^{01}_+$ along $\widetilde{\mathcal{B}}$, where $s$ is the coordinate for $(-\infty,0]$. This concludes the construction of $(X_+,\Omega_{X_+})$.

\subsection{Further definitions} \label{subsection: Lagrangian} $\mbox{}$

\s\n{\em Hamiltonian structure on $\Sigma\times[0,1]$.}
Let $\overline\omega=\widetilde\omega|_{s={3\over 2}}$. Then the Hamiltonian structure on $\Sigma\times[0,1]$ at the positive end of $X_+$ is given by $(dt,\overline\omega|_{\Sigma\times[0,1]})$.  Let $\hh^+_2$ be the flow of the corresponding Hamiltonian vector field from $\Sigma\times\{0\}$ to $\Sigma\times\{1\}$. Note that we do not necessarily have $\hh^+_2=id$ by construction.

\s\n{\em Lagrangian submanifold $L_{\bs\alpha}$.} As in Section~I.\ref{P1-subsubsection: Lagrangian}, we define the Lagrangian submanifold $L_{\bs\alpha}\subset \partial X_+$ by placing a copy of $\bs\alpha$ on the fiber $\pi^{-1}(3,1)$ over $(3,1)\in \partial B_+^0$ and using the symplectic connection $\Omega_{X_+}$ to parallel transport $\bs\alpha$ along the boundary component $(\partial B_+ ^0)\cap \{ s\geq 1\}$ of $B_+^0$. Observe that
\begin{equation} \label{eqn: L}
L_{\bs\alpha}\cap \{s\geq 3\}= ([3,\infty)\times \{0\}\times \hh^+\circ (\hh^+_2)^{-1}(\bs\alpha))\cup ([3,\infty)\times\{1\}\times \bs\alpha).
\end{equation}

\begin{lemma} \label{isotopic}
$\bs\beta':=\hh^+\circ (\hh^+_2)^{-1}(\bs\alpha)$ is isotopic to $\bs\beta$.
\end{lemma}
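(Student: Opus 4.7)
The plan is to show that $\hh^+_2 \in \op{Diff}(\Sigma)$ is isotopic to the identity; then $(\hh^+_2)^{-1}(\bs\alpha) \simeq \bs\alpha$, whence $\bs\beta' = \hh^+\circ(\hh^+_2)^{-1}(\bs\alpha)$ is isotopic to $\hh^+(\bs\alpha)$, and it remains only to verify directly that $\hh^+(\bs\alpha) \simeq \bs\beta$.

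First, I would explicitly compute the Hamiltonian vector field $R_H$ of $(dt, \overline\omega)$, since $\hh^+_2$ is by definition its time-$1$ flow from $\Sigma\times\{0\}$ to $\Sigma\times\{1\}$. Writing $\overline\omega = \widetilde\omega|_{s=3/2} = d\lambda_+$ on $\Sigma - N(z^f)$ and $\omega$ on $N(z^f)$, with $\lambda_+ = f_t\, dt + \beta_t$, the identity $\iota_{R_H} dt = 1$ and $\iota_{R_H}\overline\omega = 0$ forces $R_H = \partial_t + V$, where the vertical vector field $V$ is uniquely determined by
\[
\iota_V d_2\beta_t = d_2 f_t + \partial_t\beta_t.
\]
Using properties (5'),(6a'),(6b'),(7') from Step 2, the normalization of Step 1, and the bound (f) of Step 3, one finds $V=0$ on $(S_0 - A_{[-1,0]}) \cup N(z^f)$, $V = -\varepsilon r_1\,\partial_{\theta_1}$ on the annular collar $A_{[-1/2,N]}$, and $|V|_{C^0} = O(\delta)$ on $S_{1/2} - A_{[0,N]} - N(z^f)$. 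Letting $T_s : \Sigma \to \Sigma$ be the composition of the time-$s$ flow of $R_H$ starting from $\Sigma\times\{0\}$ with the projection to $\Sigma$, the family $\{T_s\}_{s\in[0,1]}$ is then a smooth path of diffeomorphisms of $\Sigma$ from $T_0 = id$ to $T_1 = \hh^+_2$.

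To finish, since $\hh^+|_{S_0} = \hh$ and $\hh^+|_{S_{1/2}} = id$, I would write
\[
\hh^+(\bs\alpha) = (\hh({\bf a})\times\{0\}) \cup ({\bf a}\times\{1/2\}),
\]
which differs from $\bs\beta = (\hh({\bf a})\times\{0\}) \cup ({\bf b}\times\{1/2\})$ only on $S_{1/2}$. Because $\bf b$ is by construction a small pushoff of $\bf a$ sharing the same endpoints on $\partial S$ (needed so that each $\beta_i$ is a closed curve on $\Sigma$), there is an elementary isotopy rel $\partial S_{1/2}$ taking ${\bf a}\times\{1/2\}$ to ${\bf b}\times\{1/2\}$ in $S_{1/2}$; extending by the identity on $S_0$ yields an ambient isotopy $\hh^+(\bs\alpha) \simeq \bs\beta$ on $\Sigma$. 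Concatenating with the isotopy from $\bs\beta'$ to $\hh^+(\bs\alpha)$ produced by $\hh^+\circ T_{1-s}^{-1}$ gives $\bs\beta' \simeq \bs\beta$.

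The main technical point is the computation of $V$ and the bound $|V|_{C^0} = O(\delta)$; in particular, the explicit formulas $f_t = 1 + \varepsilon r_1^2/2$ and $\beta_t = (C + r_1)d\theta_1$ on the collar of the binding force $V$ to vanish along the equator $\{r_1 = 0\}$, which is what preserves the matching of endpoints of $\bs\alpha$ across $\partial S_0 = \partial S_{1/2}$ throughout the isotopy $T_s$. The only other subtlety, namely that the isotopies respect the closed-curve structure on $\Sigma$, is automatic: the $T_s$ are genuine ambient diffeomorphisms of $\Sigma$, and the ${\bf a} \mapsto {\bf b}$ isotopy is supported in $S_{1/2}$ rel boundary.
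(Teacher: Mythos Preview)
Your overall strategy matches the paper's: reduce to the fact that $\hh^+_2$ is isotopic to the identity, and then check that $\hh^+(\bs\alpha)$ is isotopic to $\bs\beta$. However, there is one genuine slip and one piece of unnecessary work.

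The slip is the assertion $\hh^+|_{S_{1/2}} = id$. That is true only in the simplified sketch of Section~\ref{subsubsection: brief sketch}; in the actual construction of Section~\ref{subsection: W+} one has $\hh^+ = \hh^+_0 \circ \hh^+_1$, with $\hh^+_0|_{S_{1/2}} = id$ by (3') but $\hh^+_1|_{S_{1/2}}$ equal to the time-$2$ Reeb flow of $\lambda_+$ on $S_{1/2}-N(z^f)$ (and $id$ on $N(z^f)$), which is not the identity. Consequently your displayed equality $\hh^+(\bs\alpha) = (\hh({\bf a})\times\{0\}) \cup ({\bf a}\times\{1/2\})$ is false as stated. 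The fix is exactly what the paper does: observe that $\hh^+_1$, being a flow, is itself isotopic to the identity, so $\hh^+$ is isotopic to $\hh^+_0$; then $\hh^+_0|_{S_{1/2}} = id$ and $\hh^+_0|_{S_0} = \hh_0$ is isotopic to $\hh$, which gives the isotopy $\hh^+(\bs\alpha) \simeq \bs\beta$ you want.

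The unnecessary work is the explicit computation of the vertical field $V$ and the bound $|V|_{C^0} = O(\delta)$. None of this is needed: $\hh^+_2$ is \emph{by definition} the time-$1$ map of a flow on $\Sigma$, so the time-$s$ maps $T_s$ already furnish the isotopy $T_0 = id \simeq T_1 = \hh^+_2$ with no estimates required. The same remark applies to $\hh^+_1$. Your observation that $V$ vanishes along $\{r_1=0\}$ is correct but irrelevant here, since we only need ambient isotopies of $\Sigma$, and any such isotopy automatically carries closed curves to closed curves.
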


\begin{proof}
First observe that $\hh^+_1$ and $\hh^+_2$ are isotopic to the identity. Then $\hh^+$ is isotopic to $\hh^+_0$ where  $\hh^+_0|_{S_{1/2}}=id$ and $\hh^+_0|_{S_0}$ is isotopic to $\hh$. The lemma then follows.
\end{proof}

\s\n{\em Submanifolds $S_z$, $C_\theta$, and $\mathcal{H}$.}
Given $z\in N(z^f)$, let
$$S_{z} =\{z\}\times (B^0_+\cup D^2),$$ where $\{z\}\times B^0_+\subset X_+^0$ and $\{z\} \times D^2\subset X_+^1$. Also let
$$C_\theta =(\{\theta\} \times B_+^0) \cup (\{\theta\} \times (-\infty ,0]\times \R/2\Z ),$$
where $\theta\in \bdry S_0$, and let $\mathcal{H} =\cup_{\theta\in \partial S_0} C_\theta$.

\s\n{\em Definition of $W_+$.}
Let $W_+$ be the closure of the component of $X_+-\mathcal{H}$ which is disjoint from $S_{(z')^f}$. In particular, the restriction $\pi_1: W_+\cap X_+^0\to B^0_+$ is a fibration with fiber $S_0$, $W_+\cap X_+^1=\varnothing$, and $W_+\cap X_+^2=[0,\infty)\times N_{(S_0,\hh_0)}$. The cobordism $W_+$ is diffeomorphic to the cobordism used to define the map $\Phi$ in Section~I.\ref{P1-subsection: symplectic cobordisms}.

\subsection{Proof of Lemma~\ref{properties}} \label{subsection: proof of properties}

(1), (5), (6), (8) are clear from the construction.

(2) follows by letting $\Theta^+=\Theta^+_i$, $i=0,1,2$, where defined.

(3) By construction, $L_{\bs\alpha}$ is Lagrangian and $d\Theta^+|_{L_{\bs\alpha}}=0$. It then suffices to observe that $\Theta^+=0$ on $L_{\bs\alpha}\cap \pi^{-1}(3,1)$. This follows from the fact that $\bs\alpha\times\{1\}$ is a Legendrian submanifold of $(N_{(\Sigma-N(z^f),\hh_0^+)},\lambda_+)$.

(4) The first sentence follows from the construction and the second sentence follows from Lemma~\ref{isotopic}.

(7) By Lemma~\ref{description2}, the Reeb vector field $R_{\lambda_-}$ has no closed orbits in $\widetilde{\mathcal{B}}_1$ since $\delta>0$ is irrational. By Lemma~\ref{description1} and Equation~\eqref{lambda 01}, each orbit of $R_{\lambda_-}$ in $ \widetilde{\mathcal{B}}_{01}\cap \{r_1\not=r_1^*\}$ has $\lambda_-$-action $\geq {1\over 2\delta}-(C+{1\over 2})$, where $C>0$ is independent of $\delta$. The second sentence of (7) is immediate from the construction of $\lambda_-$.

\section{The chain map $\Phi^+$} \label{section: Phi^+}

The goal of this section is to define the chain map
$$\Phi^+: CF^+(\Sigma,\bs\alpha,\bs\beta,z^f)\to ECC(M,\lambda_-),$$
which is induced by the symplectic cobordism $(X_+,\Omega_{X_+})$ and an admissible almost complex structure $J^+$.  We take $\bs\beta=\hh^+_2\circ\hh^+\circ (\hh^+_2)^{-1}(\bs\alpha)$, in view of Equation~\eqref{eqn: L} and Lemma~\ref{isotopic} and the fact that $\hh^+_2$ is the flow of the  Hamiltonian vector field of $\widetilde\omega|_{s=s_0}$, $s_0\gg 0$, from $\Sigma\times\{0\}$ to $\Sigma\times\{1\}$.

{\em For simplicity we identify $X_+\cap \{s\geq s_0\}\simeq [s_0,\infty)\times[0,1]\times\Sigma$ with coordinates $(s,t,x)$ so that $\hh_2^+=id$ and the Hamiltonian vector field is $\bdry_t$.}

\subsection{Almost complex structures}

Let $\overline\omega=\widetilde\omega|_{s=3/2}$.

\begin{lemma}
There exists a sequence $(\overline{\lambda}_\tau,\overline\omega)$, $\tau\in[0,1]$, of stable Hamiltonian structures on $N_{(S_0,\hh_0)}$ such that $\overline{\lambda}_1=\lambda$, $\overline{\lambda}_\tau$ is a contact form for $\tau>0$, and $\overline{\lambda}_0=dt$. The $1$-forms $\overline{\lambda}_\tau=f_{t,\tau}dt+\beta_{t,\tau}$ can be normalized so that ${1\over 2}< |f_{t,\tau}|\leq 2$.
\end{lemma}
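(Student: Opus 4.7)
The plan is to take a simple affine interpolation between $dt$ and $\lambda$. Set
$$\overline{\lambda}_\tau \;=\; (1-\tau)\,dt + \tau\,\lambda,$$
which, in the parametrization $\overline{\lambda}_\tau = f_{t,\tau}\,dt + \beta_{t,\tau}$, corresponds to $f_{t,\tau} = (1-\tau) + \tau f_t$ and $\beta_{t,\tau} = \tau\beta_t$. The boundary values $\overline{\lambda}_0 = dt$ and $\overline{\lambda}_1 = \lambda$ are immediate, and $\overline{\lambda}_\tau$ depends smoothly on $\tau$.

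The first preliminary step is to pin down $\overline{\omega}$ on $N_{(S_0,\hh_0)}$. From Step~3 of Section~\ref{subsection: W+}, $\widetilde{\omega} = d\lambda_{+,s}$ on $W_0^+ - (N(z^f)\times B^0_+)$; and condition (a) on $\lambda_{+,s}$ together with the extension in Step~2 forces $\lambda_{+,s} = \lambda$ on $N_{(S_0,\hh_0)}$ for every $s$. Hence $\overline{\omega} = \widetilde\omega|_{s=3/2} = d\lambda$ on $N_{(S_0,\hh_0)}$. Once this identification is in hand, the stable Hamiltonian condition is nearly automatic: from $d\overline{\lambda}_\tau = \tau\,d\lambda = \tau\,\overline{\omega}$ we get $\ker\overline{\omega}\subseteq\ker d\overline{\lambda}_\tau$ trivially, while
$$\overline{\lambda}_\tau \wedge \overline{\omega} \;=\; (1-\tau)\,dt\wedge d\lambda + \tau\,\lambda\wedge d\lambda \;=\; (1-\tau)\,dt\wedge d_2\beta_t + \tau\,\lambda\wedge d\lambda$$
is a strictly positive 3-form for every $\tau\in[0,1]$, since $\lambda\wedge d\lambda>0$ by the contact condition on $\lambda$ and $dt\wedge d_2\beta_t>0$ because $d_2\beta_t$ restricts to a positive area form on each fiber $S_0\times\{t\}$ (built into the construction of $\overline{\omega}$).

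For the contact condition at $\tau>0$, the parallel one-line computation gives
$$\overline{\lambda}_\tau \wedge d\overline{\lambda}_\tau \;=\; \tau\,\bigl((1-\tau)\,dt + \tau\,\lambda\bigr)\wedge d\lambda \;=\; \tau(1-\tau)\,dt\wedge d_2\beta_t + \tau^2\,\lambda\wedge d\lambda,$$
which is a strictly positive 3-form for every $\tau\in(0,1]$ by the same positivity observations. Hence $\overline{\lambda}_\tau$ is a contact form whenever $\tau>0$.

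Finally, for the normalization: $1/2\leq f_t\leq 2$ yields $f_{t,\tau} \in [1-\tau/2,\,1+\tau]\subset[1/2,\,2]$, and $f_{t,\tau}$ is everywhere positive so $|f_{t,\tau}|=f_{t,\tau}$. If the value $f_t=1/2$ is ever attained, a small interior perturbation of $f_t$ inside its permitted range (or equivalently replacing $\lambda$ by a conformally equivalent contact form with $f_t>1/2$ strictly) upgrades the lower bound to $1/2 < f_{t,\tau}$, as required. The only point demanding care is the identification $\overline{\omega}|_{N_{(S_0,\hh_0)}} = d\lambda$ together with the fiberwise positivity of $d_2\beta_t$; both are direct consequences of the construction in Section~\ref{subsection: W+}, so I do not anticipate any substantive obstacle.
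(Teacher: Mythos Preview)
Your proof is correct and takes essentially the same approach as the paper: the paper's own proof is simply the one-line reference ``Follows from the discussion of Section~I.\ref{P1-subsection: interpolation}'', and the content of that referenced interpolation is precisely the convex combination $\overline{\lambda}_\tau = (1-\tau)\,dt + \tau\lambda$ you wrote down. Your verification of the stable Hamiltonian and contact conditions is accurate; the only minor quibble is the handling of the strict lower bound $\tfrac12 < f_{t,\tau}$ at $\tau=1$, but since the statement says the family ``can be normalized'' this is indeed resolved by an innocuous rescaling or by the observation that condition~(7) in Step~1 can be arranged with a strict inequality from the outset.
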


\begin{proof}
Follows from the discussion of Section I.\ref{P1-subsection: interpolation}.
\end{proof}

\begin{defn}
An almost complex structure $J^+$ on $X_+$ is {\it $(X_+,\Omega_{X_+})$-admissible} if the following hold:
\begin{enumerate}
\item $J^+$ is tamed by $\Omega_{X_+}$;
\item $J^+$ is $s$-invariant for $\{s\geq {3\over 2}\}\cap X_+^0$ and is adapted to the stable Hamiltonian structure $(dt,\overline\omega|_{\Sigma\times[0,1]})$ at the positive end;
\item $J^+$ is $s$-invariant for $\{s\leq -{1\over 2}\}\cap X_+^2$ and is adapted to the contact form $\lambda_-$ at the negative end;
\item the restriction $J_+$ of $J^+$ to $W_+$ is $C^l$-close to a regular admissible almost complex structure $J_+^0$ on $W_+$ with respect to $(\overline\lambda_0,\overline\omega)$ (cf.\ Definitions~I.\ref{P1-defn: admissible J for W plus} and I.\ref{P1-regular pear});
\item the surfaces $S_{(z')^f}$ and $C_\theta$ are $J^+$-holomorphic for all $\theta\in \partial S_0$.
\end{enumerate}
Let $J,J'$ be the adapted almost complex structures that agree with $J^+$ at the positive and negative ends.
\end{defn}

Note that (4) imposes additional conditions on $\Omega_{X_+}$ and $\lambda_-$. In practice, the order in which we construct $\Omega_{X_+}$ and $J^+$ is a little convoluted:
(i) choose a regular $J_+^0$, (ii) choose $\tau>0$ sufficiently small and $J_+$ sufficiently close to $J_+^0$, (iii) construct $\Omega_{X_+}$ using $\overline\lambda_\tau$ in place of $\lambda$, and (iv) extend $J^+$ to the rest of $X_+$.

Let $\mathcal{J}_{X_+}$ be the set of all $(X_+,\Omega_{X_+})$-admissible almost complex structures.

\subsection{The ECH index}

Let $\mathcal{P}=\mathcal{P}_{\lambda_-}$ be the set of simple orbits of $R_{\lambda_-}$ and let $\mathcal{O}=\mathcal{O}_{\lambda_-}$ be the set of orbit sets constructed from $\mathcal{P}$.

Let $J^+ \in \mathcal{J}_{X_+}$ be an admissible almost complex structure. Let $\mathcal{M}_{J^+}({\bf y},\gamma)$ be the set of holomorphic maps $u:(F,j)\to (X_+,J^+)$ from ${\bf y}\in \mathcal{S}_{\bs\alpha,\bs\beta}$ to $\gamma\in \mathcal{O}$, such that $\bdry F$ is mapped to a distinct component of $L_{\bs\alpha}$ and each component is used exactly once. Elements of $\mathcal{M}_{J^+}({\bf y},\gamma)$ will be called {\em $X_+$-curves.}

Let $\check{X}_+$ be $X_+$ with the ends $\{s>3\}$ and $\{s<-1\}$ removed and let
$$Z_{{\bf y},\gamma}=(L_{\bs\alpha}\cap \check{X}_+)\cup (\{3\}\times [0,1]\times{\bf y}) \cup (\{-1\}\times \gamma)$$
as in Section~I.\ref{P1-subsubsection: W plus curves}. The class $[u]$ of $u\in \mathcal{M}_{J^+}({\bf y},\gamma)$ is the relative homology class of the compactification $\check u$ in $H_2 (\check{X}_+ ,Z_{{\bf y}, \gamma})$.  Given $A \in H_2 (\check{X}_+ ,Z_{{\bf y}, \gamma})$, we write $\mathcal{M}_{J^+} ({\bf y},\gamma,A)\subset \mathcal{M}_{J^+} ({\bf y},\gamma)$ for the subset of $X_+$-curves $u$ in the class $A$.

\begin{defn}[Filtration $\mathcal{F}$]
Given a $X_+$-curve $u$, we define
$$\mathcal{F}(u)=\langle [u],S_{(z')^f}\rangle,$$
where $\langle,\rangle$ is the algebraic intersection number. Since $S_{(z')^f}$ is a holomorphic divisor, $\mathcal{F}(u)\geq 0$.
\end{defn}

The definition of the ECH index given in Section~I.\ref{P1-subsection: ECH index W plus minus} also extends directly to our case.  The ECH index of a $X_+$-curve from ${\bf y}$ to $\gamma$ in the class $A$ is denoted by $I_{X_+}(\gamma,A)$.

\subsection{Homology of $X_+$}

The goal of this subsection is to compute $H_2(X_+)$.  We introduce some notation which will be used only in this subsection: $N=N_{(S_0, \hh_0)}$, $N_0=N_{(S_{1/2}, \hh^+_0|_{S_{1/2}})}$ and $\overline{N}=N_{(\Sigma, \hh^+_0)}$.

\begin{lemma} \label{weakness}
$H_2(N) \cong H_2(M)$ and $H_1(N) \cong H_1(M) \oplus \Z$, where the extra $\Z$ factor is generated by a meridian of the binding.
\end{lemma}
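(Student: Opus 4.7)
The plan is to apply Mayer--Vietoris to the natural decomposition $M = N \cup V$, where $V \cong \partial S_0 \times D^2$ is the tubular neighborhood of the binding $K$ glued in along $T := \partial N = \partial S_0 \times S^1$. Under this gluing, the meridian disk $\{pt\} \times D^2$ of $V$ has boundary the fiber circle $\mu := \{pt\} \times S^1$ of the mapping torus, which is geometrically a meridian of $K$, while the longitude $\partial S_0 \times \{pt\}$ of $V$ is identified with $m := \partial S_0 \times \{pt\}$ on a page.

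The Mayer--Vietoris sequence is
\begin{equation*}
\cdots \to H_2(T) \xrightarrow{\alpha} H_2(N) \oplus H_2(V) \to H_2(M) \xrightarrow{\delta} H_1(T) \xrightarrow{\gamma} H_1(N) \oplus H_1(V) \to H_1(M) \to 0.
\end{equation*}
First I observe that $H_2(V) = 0$ and $\alpha = 0$: the latter because $[T] = [\partial N]$ lies in the image of the connecting map $H_3(N,\partial N) \to H_2(\partial N)$, so it maps to zero in $H_2(N)$ by exactness of the pair $(N,\partial N)$. Reducing $H_2(M) \cong H_2(N)$ therefore amounts to showing $\delta = 0$, i.e.\ that $\gamma$ is injective.

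To analyze $\gamma$, I use the basis $m,\mu$ of $H_1(T) = \Z^2$. In $V$, $m$ hits a generator of $H_1(V) = \Z$ while $\mu$ bounds the meridian disk and vanishes. In $N$, the class $[m]$ is zero because $m = \partial(S_0 \times \{0\})$, and the Wang sequence for the fibration $N \to S^1$ with fiber $S_0$ splits via the section corresponding to $\mu$, yielding
\begin{equation*}
H_1(N) \;\cong\; \operatorname{coker}\bigl(\hh_{0*} - \mathrm{id}\colon H_1(S_0) \to H_1(S_0)\bigr) \,\oplus\, \Z\langle[\mu]\rangle.
\end{equation*}
Hence $\gamma(\mu) = ([\mu],0)$ and $\gamma(m) = (0,\pm 1)$, so $\gamma$ is injective with image $\Z\langle[\mu]\rangle \oplus H_1(V)$.

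These computations deliver both assertions at once: injectivity of $\gamma$ gives $H_2(N) \cong H_2(M)$, and its cokernel gives $H_1(M) \cong H_1(N)/\Z\langle[\mu]\rangle$, so $H_1(N) \cong H_1(M) \oplus \Z\langle[\mu]\rangle$ because $[\mu]$ generates a direct summand. There is no real obstacle; the only point requiring care is identifying $\mu$ (the fiber circle of the mapping torus) as a \emph{meridian} of the binding rather than a longitude, which is what makes the extra $\Z$-summand have the claimed geometric description.
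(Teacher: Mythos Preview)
Your proof is correct. The paper's own proof is a one-line appeal to the long exact sequence of the pair $(M,N)$, whereas you use Mayer--Vietoris for $M = N \cup V$. These are essentially equivalent computations: excision identifies $H_*(M,N)$ with $H_*(V,T)$, and the two long exact sequences encode the same data. Both routes require the same two inputs, namely that $[T]$ vanishes in $H_2(N)$ and that the fiber class $[\mu]$ generates a free direct summand of $H_1(N)$; you make these explicit via the boundary of the fundamental class and the Wang sequence, while the paper leaves them implicit. Your version is more detailed but not genuinely different in strategy.
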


\begin{proof}
The lemma follows from the exact sequence of the pair $(M,N)$.
\end{proof}

\begin{lemma} \label{cough}
$H_2(X_+^0) \cong H_2(N) \oplus H_2(N_0) \oplus H_2(\Sigma)$.
\end{lemma}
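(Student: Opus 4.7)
The plan is to first show that $W_0^+$ deformation retracts onto the full mapping torus $\overline{N}:=N_{(\Sigma,\hh_0^+)}$, and then to compute $H_2(\overline{N})$ via Mayer--Vietoris for the decomposition $\overline{N}=N\cup N_0$ along the torus $T^2=\bdry S_0\times S^1$ (the common boundary, which is the mapping torus of $\hh_0^+|_{\bdry S_0}=\op{id}$).

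For the deformation retraction: the base $B_+^0 = ([0,\infty)\times\R/2\Z)-B_+^c$ deformation retracts onto $\{0\}\times \R/2\Z$, since the projection $(s,t)\mapsto t$ has contractible fibers (each fiber is an interval of the form $[0,\infty)$ or $[0,f(t))$). Because $\pi_0\colon W_0^+\to B_+^0$ is a $\Sigma$-bundle, this retraction lifts to a deformation retraction of $W_0^+$ onto $\pi_0^{-1}(\{0\}\times\R/2\Z)\cong \overline{N}$, so in particular $H_*(W_0^+)\cong H_*(\overline{N})$.

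Next I apply the Mayer--Vietoris sequence to $\overline{N}=N\cup N_0$:
$$H_2(T^2)\xrightarrow{\phi_2} H_2(N)\oplus H_2(N_0)\to H_2(\overline{N})\xrightarrow{\bdry_{MV}} H_1(T^2)\xrightarrow{\phi_1} H_1(N)\oplus H_1(N_0).$$
The map $\phi_2$ sends $[T^2]$ to $([\bdry N],[\bdry N_0])=(0,0)$, since both $N$ and $N_0$ are orientable $3$-manifolds with boundary $T^2$, so $[T^2]$ bounds on each side. For $\phi_1$, write $H_1(T^2)=\Z\langle m\rangle\oplus \Z\langle l\rangle$ with $m=[\bdry S_0\times\{\op{pt}\}]$ the meridian and $l=[\{\op{pt}\}\times S^1]$ the longitude. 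The meridian is null-homologous in both $N$ and $N_0$, since $\bdry S_0=\bdry S_{1/2}$ bounds a page on each side. The longitude, however, is nonzero in $H_1(N_0)=H_1(S_{1/2})\oplus H_1(S^1)$, projecting to a generator of the $\Z$-summand. Hence $\ker\phi_1=\Z\langle m\rangle\cong\Z$.

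Assembling the above gives a short exact sequence
$$0\to H_2(N)\oplus H_2(N_0)\to H_2(\overline{N})\to \Z\langle m\rangle\to 0.$$
To produce a splitting I use the fiber class $[\Sigma]\in H_2(\overline{N})$, realized by $\Sigma=S_0\cup(-S_{1/2})$ inside a single fiber of $\overline{N}\to\R/2\Z$. Under the Mayer--Vietoris connecting homomorphism, $\bdry_{MV}([\Sigma])=[\bdry S_0]=\pm m$, so $[\Sigma]$ splits the sequence and identifies $\Z\langle m\rangle$ with $H_2(\Sigma)$. This yields
$$H_2(W_0^+)\cong H_2(\overline{N})\cong H_2(N)\oplus H_2(N_0)\oplus H_2(\Sigma),$$
as claimed. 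The only technical point I anticipate is the identification $\bdry_{MV}([\Sigma])=\pm m$, which is an orientation-sensitive but standard computation; once that is in hand the rest of the argument is routine Mayer--Vietoris bookkeeping.
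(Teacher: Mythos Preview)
Your argument is correct and follows essentially the same route as the paper: reduce to $\overline{N}$ by a homotopy equivalence, apply Mayer--Vietoris to $\overline{N}=N\cup N_0$ over $T^2$, identify the kernel of $H_1(T^2)\to H_1(N)\oplus H_1(N_0)$ as $\Z\langle\partial S_0\rangle$, and split off the resulting $\Z$ as $H_2(\Sigma)$. You supply more detail than the paper (which is quite terse here), including the explicit splitting via the fiber class; one small terminological caution is that what you call the ``meridian'' $m=\partial S_0$ is not the ``meridian of the binding'' appearing in Lemma~\ref{weakness} (that is your $l$), but this does not affect the argument.
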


\begin{proof}
Observe that $X_+^0$ is homotopy equivalent to $\overline{N}$. We compute $H_2(\overline{N})$ using the Mayer-Vietoris sequence:
$$H_2(N \cap N_0) \stackrel{i}\to H_2(N) \oplus H_2(N_0) \to H_2(\overline{N}) \to H_1(N \cap N_0) \stackrel{j}\to H_1(N) \oplus H_1(N_0).$$
Since $i=0$ and $\ker j=\Z\langle\partial S_0\rangle = \Z\langle\partial S_{1/2}\rangle$, the lemma follows.
\end{proof}

\begin{lemma} \label{flu}
$H_2(X_+) \cong H_2(M)  \oplus H_2(\Sigma)$.
\end{lemma}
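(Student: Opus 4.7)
The strategy is to reduce the computation to Mayer--Vietoris applied to $W_0^+\cup W_1^+$, using Lemma~\ref{cough} as the main input. First I would observe that $W_2^+=(-\infty,0]\times\widetilde{\mathcal{B}}$ is a half-cylinder on $\widetilde{\mathcal{B}}\subset W_0^+\cup W_1^+$, so it deformation retracts onto its boundary and $W^+\simeq W_0^+\cup W_1^+$. Thus I only need to compute $H_2(W_0^+\cup W_1^+)$.

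Via the gluing map $\phi_{01}$, the overlap $W_0^+\cap W_1^+$ is homotopy equivalent to $S'_{1/2}\times S^1$. Since $S'_{1/2}$ has genus $g$ with connected boundary, K\"unneth gives $H_2(S'_{1/2}\times S^1)\cong\Z^{2g}$ and $H_1(S'_{1/2}\times S^1)\cong\Z^{2g}\oplus\Z$. Also $W_1^+=S'_{1/2}\times D^2$ retracts onto $S'_{1/2}$, so $H_2(W_1^+)=0$ and $H_1(W_1^+)\cong\Z^{2g}$. Combined with Lemmas~\ref{weakness} and~\ref{cough}, the relevant portion of the Mayer--Vietoris sequence takes the form
\[
\Z^{2g}\xrightarrow{f} H_2(M)\oplus H_2(N_0)\oplus H_2(\Sigma)\xrightarrow{g} H_2(W^+)\xrightarrow{\partial}\Z^{2g}\oplus\Z\xrightarrow{h}H_1(\overline{N})\oplus\Z^{2g}.
\]

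The key step is to identify $f$ and $h$. Because $\hh_0^+|_{S_{1/2}}=\mathrm{id}$, the inclusion of the overlap into $W_0^+\simeq\overline{N}$ factors through $N_0=S_{1/2}\times S^1$, and $S'_{1/2}\hookrightarrow S_{1/2}$ induces an isomorphism on $H_1$. Hence $f$ is an isomorphism onto the summand $H_2(N_0)\cong\Z^{2g}$, and $\mathrm{coker}(f)\cong H_2(M)\oplus H_2(\Sigma)$. For $h$: an element $a+b[S^1]\in H_1(S'_{1/2}\times S^1)$ has image $a$ in $H_1(W_1^+)=H_1(S'_{1/2})$ (since $[S^1]=[\partial D^2]$ bounds in $D^2$), so the kernel of $h$ is contained in $\Z\langle[S^1]\rangle$; but $[S^1]$ is the fiber class of the circle fibration $\overline{N}\to\R/2\Z$, which is nonzero in $H_1(\overline{N})$. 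Hence $h$ is injective, $\partial=0$, and $H_2(W^+)\cong H_2(M)\oplus H_2(\Sigma)$.

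The main obstacle is verifying that the image of $f$ coincides with the precise $H_2(N_0)$ summand produced by Lemma~\ref{cough}. This amounts to showing that the two Mayer--Vietoris decompositions --- the one used for $\overline{N}=N\cup N_0$ in Lemma~\ref{cough}, and the one used here for $W_0^+\cup W_1^+$ --- are compatible, which follows from the fact that both decompositions share the same ``seam'' torus $\partial S_0\times S^1$ and that $\hh_0^+$ is the identity on $S_{1/2}$.
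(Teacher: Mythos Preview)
Your proof is correct and follows essentially the same Mayer--Vietoris argument as the paper: reduce to $W_0^+\cup W_1^+$, identify the overlap with $N_0\simeq S_{1/2}\times S^1$, show that $f$ hits exactly the $H_2(N_0)$ summand from Lemma~\ref{cough}, and show $h$ is injective. One terminological slip to fix: in your injectivity argument for $h$, the class $[S^1]$ is not a ``fiber class'' of $\overline{N}\to\R/2\Z$ (that fibration has fiber $\Sigma$, not a circle) but rather a section class; it is nonzero in $H_1(\overline{N})$ because it pairs nontrivially with the fiber $[\Sigma]$, which is exactly how the paper phrases it.
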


\begin{proof}
$X_+$ is homotopy equivalent to $X_+^0 \cup X_+^1$ and $X_+^0 \cap X_+^1 \cong N_0$. Since $X_+^1$ is homotopy equivalent to $S_{1/2}$,  the Mayer-Vietoris sequence becomes:
$$H_2(N_0) \stackrel{i}\to H_2(X_+^0) \to H_2(X_+) \to H_1(N_0) \stackrel{j}\to H_1(X_+^0) \oplus H_1(S_{1/2}).$$
The map $i$ surjects onto the factor $H_2(N_0)$ in the decomposition of $H_2(X_+^0)$ coming from Lemma~\ref{cough}. The map $j$ is injective, since
$H_1(N_0) \cong H_1(S_{1/2}) \oplus H_1(S^1)$
by the K\"unneth formula, the restriction $j: H_1(S_{1/2})\to H_1(S_{1/2})$ is an isomorphism, and the restriction $j:H_1(S^1)\to H_1(\overline{N})\simeq H_1(X_+^0)$ is injective because the image of the generator of $H_1(S^1)$ is dual to the fiber $\Sigma$. The lemma then follows from Lemma~\ref{weakness}.
\end{proof}

\subsection{Energy bound}

\begin{defn}
Let $\mathcal{C}_+$ be the set of nondecreasing functions $[0,+\infty )\rightarrow [0,1]$ and let $\mathcal{C}_-$ be the set of nondecreasing functions $(-\infty ,0]\rightarrow [0,1]$. Let
$$\Omega_{\phi,\psi}^+:=\left\{ \begin{array}{cl}
\widetilde\omega +d\phi(s)\wedge dt & \mbox{on }~ X_+^0\cap X_+^{01};\\
\Omega_1^+ & \mbox{on }~ X_+^1\cap X_+^{01};\\
d(\psi(s)\lambda_-) & \mbox{on }~ X_+^2,
\end{array} \right.$$
where $(\phi,\psi)\in \mathcal{C}_+\times \mathcal{C}_-$.\footnote{$\phi,\psi$ used here are not to be confused with $\phi,\psi$ which appeared in Section~\ref{subsection: W+}.} Then the {\em energy} of a $X_+$-curve $u:F\to X_+$ from $[{\bf y} , i]$ to $\gamma$ is given by:
\begin{equation}
E(u)=\sup_{\phi,\psi} \int_{F} u^*\Omega_{\phi,\psi}^+,
\end{equation}
where the supremum is taken over all pairs $(\phi,\psi)\in \mathcal{C}_+\times \mathcal{C}_-$ such that $\Omega_{\phi,\psi}^+$ is smooth.\footnote{Recall that $\Omega_1^+$ is given by Equation~\eqref{primitive 2} and agrees with $\Omega_0^+$, given by Equation~\eqref{primitive 1}, on their overlap. This imposes conditions on $(\phi,\psi)$.}
\end{defn}

The condition imposed on the intersection with $S_{(z')^f}$ gives an energy bound:

\begin{lemma}[Energy bound]\label{lemma: energy bound}
For all $k\in \N$, there exists $N_k >0$ such that $E(u) \leq N_k$ for all ${\bf y}\in \mathcal{S}_{\bs\alpha,\bs\beta}$, $\gamma\in \mathcal{O}$, and $u\in \mathcal{M}^{\mathcal{F}= k}_{J^+} ({\bf y}, \gamma)$.
\end{lemma}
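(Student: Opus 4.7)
The plan is to exploit that $\Omega^+_{\phi,\psi}$ is exact off a neighborhood of the fiberwise divisor $S_{z^f}$, with a primitive whose $C^0$-norm is uniformly bounded in $(\phi,\psi)\in\mathcal{C}_+\times\mathcal{C}_-$, and to convert $\int_F u^*\Omega^+_{\phi,\psi}$ via Stokes into a collection of boundary and asymptotic integrals which can be estimated individually. The key observation is that because $\phi,\psi$ take values in $[0,1]$, the primitive itself is bounded independent of the test pair, so every boundary term except possibly the one at $\gamma$ is immediately uniformly bounded.

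First I would write down the piecewise primitive
\[
\Theta^+_{\phi,\psi}=\begin{cases}\lambda_{+,s}+\phi(s)\,dt&\text{on }W_0^+\setminus(N(z^f)\times B^0_+),\\ \lambda_{+,0}+\phi(r_2)\,d\theta_2&\text{on }W_1^+\setminus(N(z^f)\times D^2),\\ \psi(s)\lambda_-&\text{on }W_2^+,\end{cases}
\]
where $\phi(r_2)$ denotes the function from Step~4 of Section~\ref{subsection: W+} and the three pieces are forced to agree on overlaps by the smoothness condition on $\Omega^+_{\phi,\psi}$. Then I would apply Stokes to $F^\circ:=F\setminus u^{-1}(N(S_{z^f}))$:
\[
\int_F u^*\Omega^+_{\phi,\psi}=\int_{u^{-1}(N(S_{z^f}))} u^*\Omega^+_{\phi,\psi}+\int_{\partial F^\circ} u^*\Theta^+_{\phi,\psi}.
\]
The first integral is bounded by $k$ times the $\omega$-area of the disk fiber of $N(S_{z^f})$, because $S_{z^f}$ is homologous to $S_{(z')^f}$ and hence $\langle u,S_{z^f}\rangle=k$. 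The boundary $\partial F^\circ$ splits into four pieces: (a) the Lagrangian $u^{-1}(L_{\bs\alpha})$, on which the integral is bounded by a constant via Lemma~\ref{properties}(3) together with the fact that $dt|_{L_{\bs\alpha}\cap\{s\geq 3\}}=0$ and $\lambda_{+,s}|_{\bs\alpha}=0$ (Legendrian condition); (b) the positive asymptotic strips $[0,1]\times{\bf y}$, contributing at most a constant $C({\bf y})$ that is uniformly bounded over the finite set $\mathcal{S}_{\bs\alpha,\bs\beta}$; (c) the $k$ link circles around $u\cap S_{z^f}$, each contributing at most a fixed residue; and (d) the negative asymptote at $\gamma$, contributing $\psi(-\infty)\cdot\mathrm{action}(\gamma)$.

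The main obstacle is the term $\psi(-\infty)\cdot\mathrm{action}(\gamma)$, since $\mathrm{action}(\gamma)$ is a priori unbounded over the infinite set $\mathcal{O}$. To control it I would use that $J^+$ tames $\Omega^+_{\phi,\psi}$ (the nondecreasing nature of $\phi,\psi$ preserves positivity), so $\int_F u^*\Omega^+_{\phi,\psi}\geq 0$ for every admissible pair. Applying this positivity to a specific choice $(\phi^\flat,\psi^\flat)$ with $\psi^\flat(-\infty)=1$ and reading off the corresponding decomposition yields an inequality of the form $\mathrm{action}(\gamma)\leq A_0+A_1 k$ with $A_0,A_1$ depending only on $k$ and the Heegaard data. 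Substituting this back into the general estimate produces $\int_F u^*\Omega^+_{\phi,\psi}\leq N_k$ uniformly in $(\phi,\psi)\in\mathcal{C}_+\times\mathcal{C}_-$, ${\bf y}\in\mathcal{S}_{\bs\alpha,\bs\beta}$, and $\gamma\in\mathcal{O}$, and taking the supremum finishes the proof.
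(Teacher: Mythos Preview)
Your overall framework---write down the primitive $\Theta^+_{\phi,\psi}$, apply Stokes, and bound each boundary piece---is exactly the paper's approach. However, you have a sign error in item~(d) that creates a spurious difficulty.

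When you apply Stokes to the truncated domain, the negative asymptotic circles are \emph{inward} boundary components, so their contribution is $-\psi(-\infty)\cdot\mathcal{A}_{\lambda_-}(\gamma)$, not $+\psi(-\infty)\cdot\mathcal{A}_{\lambda_-}(\gamma)$. Since $\psi\geq 0$ and the action is nonnegative, this term is $\leq 0$ and only \emph{helps} the upper bound. The paper records this as $\inf_{\psi\in\mathcal{C}_-}\int_\gamma\psi\lambda_-=0$ and simply drops the term. Thus your ``main obstacle'' is illusory, and the entire detour through positivity with a special pair $(\phi^\flat,\psi^\flat)$ to bound $\mathcal{A}_{\lambda_-}(\gamma)$ is unnecessary. (Worse: with your incorrect sign that detour would not actually deliver an upper bound on the action---positivity would give $\mathcal{A}_{\lambda_-}(\gamma)\geq -C$ rather than $\leq C$.)

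Two smaller remarks. First, the paper handles the divisor contribution slightly differently: rather than excising $u^{-1}(N(S_{z^f}))$ and picking up link-circle residues, it replaces $u$ by a representative $v$ of $[u]-k[\Sigma]$ which avoids $N(S_{z^f})$ entirely, so that $E(u)=E(v)+k\int_\Sigma\widetilde\omega$. Second, in your item~(a) you should note that while $dt|_{L_{\bs\alpha}\cap\{s\geq 3\}}=0$, on the compact portion $L_{\bs\alpha}\cap\{1\leq s\leq 3\}$ the form $dt$ does not vanish; the paper bounds $\sup_\phi\int_{\partial F}\phi\,dt$ by $2g$ using $0\leq\phi\leq 1$ and the total $t$-variation of each Lagrangian arc.
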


\begin{proof}
Let $u:(F,j)\to (X_+,J^+)$ be a holomorphic map in $\mathcal{M}^{\mathcal{F}=k}_{J^+} ({\bf y}, \gamma)$.  By Lemma~\ref{properties}(2),(3), $\Omega_{X_+}=d\Theta^+$ on $X_+^\circ:=X_+-N(S_{z^f})$ and the Lagrangian $L_{\bs\alpha}$ is $\Theta^+$-exact. Hence $\int_{\partial F} u^* \Theta^+$ only depends on ${\bf y}$.

Let $v: F'\to X_+^\circ$ be a representative of the homology class $[u]-k[\Sigma]\in H_2 (\check{X}_+ ,Z_{{\bf y} ,\gamma} )$. Since the energy is obtained by integrating a closed form,
$$E(u) = E(v) +k\int_{\Sigma} \widetilde\omega.$$

Now $\Omega_{X_+}^{\phi,\psi}=d\Theta^+_{\phi,\psi}$ on $X_+^\circ$, where
$$\Theta^+_{\phi,\psi}=\left\{ \begin{array}{cl}
\lambda_{+,s}+ \phi(s)dt & \mbox{on }~ X_+^0\cap X_+^\circ\cap X_+^{01};\\
\Theta_1^+ & \mbox{on }~ X_+^1\cap X_+^\circ\cap X_+^{01};\\
\psi (s)\lambda_- & \mbox{on }~ X_+^2.
\end{array} \right.$$
By Equations~\eqref{primitive 1} and \eqref{primitive 2}, $\Theta_1^+$ can be written as $\lambda_{+,s}+ (s+{\pi\over 10})dt$ on $X_+^0\cap X_+^1\cap X_+^\circ\cap X_+^{01}$. Since ${\pi\over 10}<1$, there exist smooth $\Theta^+_{\phi,\psi}$ that extend $\Theta_1^+$, i.e., there exist $\phi\in \mathcal{C}_+$ that extend $\phi(s)=s+{\pi\over 10}$ near $s=0$.

By Stokes' theorem,
\begin{align} \label{equation: E of v}
E(v)& \leq  \int_{\{s\}\times [0,1]\times{\bf y}}\lambda_++\sup_{\phi\in \mathcal{C}_+}\lim_{s\to\infty} \int_{\{s\}\times [0,1]\times{\bf y}}\phi dt \\
\notag & \quad + \int_{\partial F} v^* \lambda_{+}+ \sup_{\phi \in \mathcal{C}_+} \int_{\partial F} \phi  dt -\inf_{\psi\in\mathcal{C}_-}\int_{\gamma} \psi \lambda_-\\
\notag & \leq 4g+ \int_{[0,1]\times{\bf y}} \lambda_+ +\int_{\partial F} v^* \lambda_{+}.
\end{align}
Recall that $\lambda_{+,s}=\lambda_+$ for $s\geq{3\over 2}$. In the above calculation,
$$\sup_{\phi\in \mathcal{C}_+}\lim_{s\to\infty} \int_{\{s\}\times [0,1]\times{\bf y}}\phi dt =2g,~~~ \sup_{\phi \in \mathcal{C}_+} \int_{\partial F} \phi  dt= 2g,~~~ \inf_{\psi\in\mathcal{C}_-}\int_{\gamma} \psi \lambda_-=0.$$
We then obtain
$$E(u) \leq  4g+\int_{[0,1]\times{\bf y}} \lambda_+ + \int_{\partial F} u^* \lambda_{+} + k\int_{\Sigma} \widetilde\omega,$$
which is the desired bound.
\end{proof}

\subsection{Regularity}

Define the subset $\mathcal{M}^h_{J^+} ({\bf y},\gamma,A)\subset \mathcal{M}_{J^+} ({\bf y},\gamma,A)$ consisting of holomorphic curves without vertical fiber components.  As in Lemma~I.\ref{P1-lemma: HF regularity for overline J}, the set $\mathcal{J}_{X_+}^{reg}$ of {\it regular} $J^+ \in \mathcal{J}_{X_+}$ for which all the moduli spaces $\mathcal{M}^h_{J^+} ({\bf y},\gamma,A)$ are transversally cut out is a dense subset of $\mathcal{J}_{X_+}$. We can restrict attention to $\mathcal{M}^h_{J^+} ({\bf y},\gamma,A)$ for the following reason:

\begin{lemma}
If $J^+\in \mathcal{J}_{X_+}^{reg}$ and $u\in \mathcal{M}_{J^+} ({\bf y},\gamma,A)-\mathcal{M}^h_{J^+} ({\bf y},\gamma,A)$, then $I_{X_+}(u)\geq 2+2g$.
\end{lemma}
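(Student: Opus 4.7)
The plan is to decompose a curve with a vertical fiber component into its fiber and non-fiber pieces and bound the ECH-index contribution of each.

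Given $u\in\mathcal{M}_{J^+}({\bf y},\gamma,A)\setminus\mathcal{M}^h_{J^+}({\bf y},\gamma,A)$, I would write $u=u_0\sqcup u_f$, where $u_f$ is the (nonempty) union of vertical fiber components and $u_0\in\mathcal{M}^h_{J^+}({\bf y},\gamma,A-[u_f])$. Since the fibers of $\pi_1\colon W_1^+\to D^2$ are the bordered surfaces $S'_{1/2}$ and the negative end $W_2^+$ is a symplectization (so supports no nontrivial closed curves), every component of $u_f$ must lie in $W_0^+$ and map to some fiber $F=\{(s_0,t_0)\}\times\Sigma$ of $\pi_0$. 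By Riemann--Hurwitz each such component is a branched cover of $F$, so $[u_f]=d[F]\in H_2(W^+)$ for some $d\geq 1$.

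Next I would apply the standard additivity of the ECH index for the decomposition $[u]=[u_0]+d[F]$. Because $F$ is a regular fiber of a symplectic $\Sigma$-fibration, $c_1(TW^+|_F)=\chi(\Sigma)=2-4g$ and $[F]\cdot[F]=0$, so $I_{W^+}(d[F])=d(2-4g)$. The curve $u_0$ has multisection degree $2g$ at the positive end (one sheet for each $y_i\in {\bf y}$), hence $\langle[u_0],[F]\rangle=2g$. Combining,
\[
I_{W^+}(u)=I_{W^+}(u_0)+d(2-4g)+2d\cdot 2g=I_{W^+}(u_0)+2d.
\]
For $d\geq g+1$ this already yields $I_{W^+}(u)\geq 2+2g$, using $I_{W^+}(u_0)\geq 0$, which follows from $J^+\in\mathcal{J}^+_{reg}$ (so $u_0$ is regular and $\operatorname{ind}(u_0)\geq 0$) together with the ECH index inequality (Lemma~\ref{index ineq}).

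The main obstacle is the range $1\leq d\leq g$, where one must show $I_{W^+}(u_0)\geq 2(g+1-d)$. To handle this I would use Riemann--Hurwitz on $u_f$: the Fredholm index of a $d$-fold branched cover of $F$ with total branching $B\geq 0$ is $\operatorname{ind}(u_f)=d(2-4g)+B$, which is very negative unless $B$ is correspondingly large. Since $u_0$ and $u_f$ meet transversally in $2gd$ points, the matching conditions there, together with the regularity of $u_0$ in $\mathcal{M}^h_{J^+}$, translate the index deficit of $u_f$ into an inflation of $\operatorname{ind}(u_0)$ and hence, via the ECH index inequality $I_{W^+}(u_0)\geq\operatorname{ind}(u_0)+2\delta(u_0)$, of $I_{W^+}(u_0)$. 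Balancing the two contributions in the formula $I_{W^+}(u)=I_{W^+}(u_0)+2d$ gives the uniform bound $I_{W^+}(u)\geq 2+2g$ for all $d\geq 1$ (assuming $g\geq 1$, which holds since the paper assumes $g\geq 2$).
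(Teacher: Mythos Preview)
Your decomposition $u=u_0\sqcup u_f$ with $[u_f]=d[F]$, the additivity formula
\[
I_{W^+}(u)=I_{W^+}(u_0)+I_{W^+}(d[F])+2d\langle[u_0],[F]\rangle,
\]
and the bound $I_{W^+}(u_0)\geq\operatorname{ind}(u_0)\geq 0$ are exactly the paper's argument. Your evaluation $I_{W^+}(d[F])=d(2-4g)$ (using $\chi(\Sigma)=2-4g$ and $[F]^2=0$) and $\langle[u_0],[F]\rangle=2g$ are also correct, and they give $I_{W^+}(u)=I_{W^+}(u_0)+2d\geq 2d$. The paper instead writes $I(u_2)=k(2-2g)$, which looks like a slip ($\Sigma$ has genus $2g$, so $\chi(\Sigma)=2-4g$; compare Lemma~\ref{lemma: A}(2), where the same computation is done correctly). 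With the paper's number one gets $k(2+2g)$ directly; with the correct number one gets only $2k\geq 2$, which is still enough for every application in the paper (ruling out fiber components in $I\leq 1$ moduli spaces).

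Where your proposal goes wrong is the attempt to recover the stated bound $2+2g$ for $1\leq d\leq g$. There are no ``matching conditions'' between $u_0$ and $u_f$: these are disjoint irreducible components of a disconnected holomorphic map, not pieces of a nodal curve with nodes to be smoothed. The Fredholm index of $u_0$ is determined entirely by the topology of $u_0$ and its asymptotics, and is completely insensitive to the presence or index of $u_f$. The negativity of $\operatorname{ind}(u_f)$ cannot be ``transferred'' to inflate $\operatorname{ind}(u_0)$ or $I_{W^+}(u_0)$. So this part of the argument does not work, and from the additivity formula alone you cannot do better than $I_{W^+}(u)\geq 2d$.
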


\begin{proof}
Suppose $u=u_1\cup u_2$, where $u_1$ is regular and $u_2$ is homologous to $k\geq 1$ times a fiber. Since $\langle u_1,u_2\rangle=k\cdot 2g$,
\begin{align*}
I(u) &= I(u_1)+I(u_2)+2k(2g)\\
& \geq 0+ k(2-2g) +4kg \geq k(2+2g).
\end{align*}
Here $I(u_1)\geq 0$ since $I(u_1)\geq \op{ind}(u_1)$ by the index inequality and $\op{ind}(u_1)\geq 0$ by the regularity of $u_1$.
\end{proof}

\subsection{Holomorphic curves in $X_+$ without positive ends}

In this subsection and the next, we make essential use of the assumption $g(S)\geq 2$.

Let $S''= S_{1/2}-A_{[0,N]}$ and $\overline{S}''=S''\cup \{\infty\}$. We define the ``projection'' $\pi_{\overline{S}''}: X_+\to \overline{S}''$ as follows:
\begin{itemize}
\item on $X_+^0$, $\pi_{\overline{S}''}(s,x,t)=x$ if $x\in S''$ and $\pi_{\overline{S}''}(s,x,t)=\infty$ if $x\not\in S''$;
\item on $X_+^1$, $\pi_{\overline{S}''}(x,r_2,\theta_2)=x$ if $x\in S''$ and $\pi_{\overline{S}''}(x,r_2,\theta_2)=\infty$ if $x\not\in S''$;
\item $\pi_{\overline{S}''}(X_+^2)=\infty$.
\end{itemize}

\begin{lemma} \label{genus greater than 2}
If $u: \dot F\to (X_+,J^+)$ is a holomorphic map without positive ends, then $g(F)\geq 2$.
\end{lemma}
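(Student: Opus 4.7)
The plan is to project $u$ to the closed genus-$g$ surface $\overline{S}''$ via $\pi_{\overline{S}''}$ and apply the standard degree inequality for continuous maps between closed oriented surfaces. Observe first that $\overline{S}''=S''\cup\{\infty\}$, the one-point compactification of the genus-$g$ surface $S''=S_{1/2}-A_{[0,N]}$ with connected boundary, is a closed oriented surface of genus $g\geq 2$.

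First I would extend $\pi_{\overline{S}''}\circ u$ to a continuous map $\hat u: \hat F\to \overline{S}''$ from a closed oriented surface $\hat F$ with $g(\hat F)=g(F)$. Since $u$ has no positive ends, every asymptotic puncture of $\dot F$ approaches a Reeb orbit in $M\subset W_2^+$, and $\pi_{\overline{S}''}(W_2^+)=\{\infty\}$, so capping off each puncture with a point extends the map continuously. If $F$ has boundary on $L_{\bs\alpha}$, each boundary circle can additionally be capped off by a disk mapped conically onto the image of that circle in $\overline{S}''$, without changing the genus.

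Next I would compute $\deg(\hat u)$ using the regular value $(z')^f\in S''$, chosen close to $z^f$ so as to avoid the images of $\bs\alpha$, $\bs\beta'$, and the conical caps. By construction $\pi_{\overline{S}''}^{-1}((z')^f)=S_{(z')^f}$, which is $J^+$-holomorphic by admissibility of $J^+$. Positivity of intersections between the holomorphic curve $u$ and the holomorphic divisor $S_{(z')^f}$ then forces each intersection point to contribute $+1$ to the local degree, giving $\deg(\hat u)=\mathcal{F}(u)\geq 0$. Whenever $\mathcal{F}(u)\geq 1$, the standard inequality $|\deg(\hat u)|(2g(\overline{S}'')-2)\leq 2g(\hat F)-2$ combined with $g(\overline{S}'')=g\geq 2$ immediately yields $g(F)\geq g\geq 2$.

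The main obstacle is ruling out the case $\mathcal{F}(u)=0$. In that case $u$ is disjoint from $S_{(z')^f}$, and I would repeat the construction of Step 4 of Section~\ref{subsection: W+} with $(z')^f$ in place of $z^f$ to produce a primitive $\widetilde\Theta^+$ of $\Omega^+$ on a neighborhood of the image of $u$, exact on $L_{\bs\alpha}$ and equal to $e^s\lambda_-$ on $W_2^+$. A Stokes-theorem computation on an exhaustion of $\dot F$ then gives $\int_{\dot F}u^*\Omega^+=0$: the Lagrangian boundary contributes $0$ by exactness, and the negative-end contributions vanish as $s\to -\infty$ because $e^s\to 0$. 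On the other hand, $\Omega^+$-tameness of $J^+$ gives $\int_{\dot F}u^*\Omega^+\geq 0$ with equality only for a constant $u$, so $u$ is forced to be constant; this contradicts nontriviality of $u$ and excludes $\mathcal{F}(u)=0$.
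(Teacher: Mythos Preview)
Your argument is essentially the paper's: extend $\pi_{\overline{S}''}\circ u$ to a map $F\to\overline{S}''$ from a closed surface, show its degree is positive because $\Omega^+$ is exact off $S_{(z')^f}$, and then invoke the genus inequality for nonzero-degree maps between closed oriented surfaces. Your Stokes computation is exactly the content behind the paper's one-line ``the symplectic form is exact on $W^+-S_{(z')^f}$''.

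One point to clean up: in this setting a holomorphic curve without positive ends has \emph{no} boundary on $L_{\bs\alpha}$, since $L_{\bs\alpha}$ lives over $\partial B^0_+\cap\{s\geq 1\}$ and a boundary component there forces the curve out to $s=+\infty$. So your cone-capping step is unnecessary --- which is fortunate, because it does not work as written: the image under $\pi_{\overline{S}''}$ of a boundary circle on an $\alpha$-curve lying in $S_{1/2}$ is typically a homotopically nontrivial loop in $\overline{S}''$ and cannot be coned off by a disk. Once you drop that step (and with it the need to worry about $\bs\alpha$, $\bs\beta'$ when choosing the regular value), the proof goes through exactly as in the paper.
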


\begin{proof}
The map $\pi_{\overline{S}''}\circ u$ can be extended to a continuous map $f: F\to \overline{S}''$. Observe that the curve $u$ must intersect $S_{(z')^f}$ because the symplectic form is exact on $X_+-S_{(z')^f}$. Hence $\deg f>0$. Now we use the following fact: If $f:\Sigma_1\to \Sigma_2$ is a positive degree map between closed oriented surfaces, then $g(\Sigma_1)\geq g(\Sigma_2)$. Since $g(S)=g(\overline{S}'')\geq 2$, it follows that $g(F)\geq 2$.
\end{proof}

\begin{lemma} \label{closed curves}
There are no $I=0$ closed holomorphic curves in $(X_+,J^+)$.
\end{lemma}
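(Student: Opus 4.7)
The plan is to derive a contradiction from three ingredients: exactness of $\Omega^+$ away from the section $S_{z^f}$, the genus lower bound $g(F)\geq 2$ from Lemma~\ref{genus greater than 2}, and the adjunction inequality.

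First I would show that any nonconstant closed $J^+$-holomorphic curve $u:F\to W^+$ satisfies $k:=\mathcal{F}(u)\geq 1$. By Lemma~\ref{properties}(2), $\Omega^+=d\Theta^+$ on $W^+-N(S_{z^f})$, so if the image of $u$ avoided $N(S_{z^f})$ then $\int_F u^*\Omega^+=0$ by Stokes' theorem, contradicting positivity of symplectic area. Since $S_{(z')^f}$ is $J^+$-holomorphic and homologous to $S_{z^f}$, positivity of intersections gives $\mathcal{F}(u)\geq 1$.

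Next I would compute $[u]\cdot[u]=0$ in $H_2(W^+)$. By Lemma~\ref{flu}, $H_2(W^+)\cong H_2(M)\oplus H_2(\Sigma)$, with the second summand generated by a regular fiber $[\Sigma]$ of $W_0^+\to B_+^0$; since $[\Sigma]\cdot[S_{(z')^f}]=1$, the fiber coefficient of $[u]$ is exactly $k$. Writing $[u]=A+k[\Sigma]$ with $A\in H_2(M)$, one has $[\Sigma]\cdot[\Sigma]=0$ (fibers of a fibration), $A\cdot[\Sigma]=0$ ($A$ can be represented in $\{s_0\}\times M\subset W_2^+$, disjoint from any fiber of $W_0^+$), and $A\cdot A=0$ (push $A$ off in the $s$-direction), so $[u]\cdot[u]=0$.

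I would then pass to the underlying simple curve $u':F'\to W^+$ with $u=m\cdot u'$, $m\geq 1$. Since the images agree, $\mathcal{F}(u')\geq 1$ and the previous computation applied to $u'$ gives $[u']\cdot[u']=0$; moreover the argument of Lemma~\ref{genus greater than 2} applies verbatim to $u'$ (the composition $\pi_{\overline{S}''}\circ u$ factors through $u'$, so $\pi_{\overline{S}''}\circ u'$ still has positive degree), yielding $g(F')\geq 2$. The adjunction inequality for simple $J^+$-holomorphic curves then reads
\[
[u']\cdot[u']-c_1(TW^+)[u']=2g(F')-2+2\delta(u')\geq 2,
\]
forcing $c_1(TW^+)[u']\leq -2$. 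Finally, since $u$ has no ends, its ECH index reduces to
\[
I(u)=c_1(TW^+)[u]+[u]\cdot[u]=m\,c_1(TW^+)[u']+m^2\,[u']\cdot[u']=m\,c_1(TW^+)[u']\leq -2m<0,
\]
contradicting $I(u)=0$. The main subtlety is the reduction to the simple underlying curve, needed because adjunction and the genus bound apply most cleanly to $u'$ rather than to a possibly multiple cover; both inputs descend to $u'$ without issue because the image of $u$ and the composition $\pi_{\overline{S}''}\circ u$ both factor through $u'$.
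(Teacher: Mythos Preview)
Your proof is correct and uses the same three ingredients as the paper: the triviality of the intersection form on $H_2(W^+)$ (Lemma~\ref{flu}), the genus bound $g(F)\geq 2$ (Lemma~\ref{genus greater than 2}), and adjunction. The organization differs slightly. The paper argues by cases: for a simple curve with $I=0$ it deduces $c_1=0$ and hence $\chi(F)\geq 0$ from adjunction, contradicting the genus bound; it then invokes regularity of $J^+$ and the index inequality to conclude that every simple closed curve has $I>0$, and handles multiple covers via $I(dA)=dI(A)+(d^2-d)A\cdot A=dI(A)\geq d$. You instead pass to the simple underlying curve $u'$, use adjunction together with $[u']^2=0$ and $g(F')\geq 2$ to obtain $c_1[u']\leq -2$ directly, and then compute $I(u)=m\,c_1[u']\leq -2m<0$. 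Your route is marginally cleaner in that it does not appeal to regularity of $J^+$, but the two arguments are essentially the same. (Your first paragraph, establishing $\mathcal{F}(u)\geq 1$, is not strictly needed: the self-intersection computation holds for any class in $H_2(W^+)$, and the positivity of $\mathcal{F}$ is already embedded in the proof of Lemma~\ref{genus greater than 2}.)
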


\begin{proof}
We argue by contradiction. Let $A=[u_*(F)]$.  By Lemma~\ref{flu}, the intersection form on $H_2(X_+)$ is trivial.  Hence $A\cdot A=0$. If $I(A)=A\cdot A+ c_1(A)=0$,  then it follows that $c_1(A)=0$.

Suppose that $u$ is simple. Then $\chi(F)\geq 0$ by the adjunction formula. This contradicts Lemma~\ref{genus greater than 2}. In particular $I(u)>0$ by the regularity of $u$ and the index inequality. If $v$ is a degree $d$ branched cover of $u$ in the class $A$, then $I(v)=I(dA)=dI(A)\geq d$ using the formula
\begin{equation}
I(dA)= dI(A) +(d^2-d) A\cdot A.
\end{equation}
\end{proof}

\begin{lemma} \label{curves with negative end}
A multiply-covered holomorphic curve $u$ with only negative ends has $I(u)>0$.
\end{lemma}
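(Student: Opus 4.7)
Let $u:\dot{F}\to W^+$ be a $d$-fold branched cover ($d\geq 2$) of an underlying simple holomorphic curve $u_0:\dot{F}_0\to W^+$. Since $u$ has only negative ends, so does $u_0$: any positive end of $u_0$ would lift under the covering map $\pi:\dot{F}\to\dot{F}_0$ to a positive end of $u$, of which there are none. Applying Lemma~\ref{genus greater than 2} to $u_0$, we obtain $g(F_0)\geq 2$, so $\chi(F_0)\leq -2$. My plan is to follow the two-step strategy used in the proof of Lemma~\ref{closed curves}: first show $I(u_0)\geq 1$, and then deduce $I(u)>0$ from the behavior of the ECH index under $d$-fold branched covers.

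\emph{Step 1.} Because $u_0$ is simple and $J^+$ is regular, the index inequality gives $I(u_0)\geq \op{ind}(u_0)+2\delta(u_0)\geq 0$. Suppose for contradiction that $I(u_0)=0$. Then $\op{ind}(u_0)=0$ and $\delta(u_0)=0$, so $u_0$ is embedded. The Fredholm index formula for a curve with only negative ends at the orbit set $\prod_i\gamma_i^{m_i}$,
\[
\op{ind}(u_0) \;=\; -\chi(F_0)+2c_\tau([u_0])-\sum_i \op{CZ}_\tau(\gamma_i^{m_i}),
\]
combined with $-\chi(F_0)\geq 2$, forces $\sum_i \op{CZ}_\tau(\gamma_i^{m_i})\geq 2+2c_\tau([u_0])$. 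Using the explicit structure of $\lambda_-$ from Lemma~\ref{properties}(6)--(7)---only the orbits on the Morse--Bott tori $T_\pm$ have small action and their Conley--Zehnder indices are controlled by the Morse--Bott model, while every other closed orbit has action at least $\tfrac{1}{2\delta}-\kappa$ and thus cannot appear in a low-index curve---this Conley--Zehnder bound is incompatible with $I(u_0)=0$, yielding the desired contradiction.

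\emph{Step 2.} For the $d$-fold cover $u$, bilinearity of $c_\tau$ and $Q_\tau$ together with the behavior of the Conley--Zehnder terms under multiple covers gives a decomposition of the form
\[
I(u) \;=\; d\,I(u_0)+(d^2-d)\,Q_\tau([u_0])+\Bigl[\sum_i \op{CZ}_\tau(\gamma_i^{dm_i})-d\sum_i \op{CZ}_\tau(\gamma_i^{m_i})\Bigr].
\]
The $Q_\tau$ correction is controlled exactly as in the proof of Lemma~\ref{closed curves}, using the intersection-form computation from Lemma~\ref{flu}, and the Conley--Zehnder correction is nonnegative in the present Morse--Bott setup thanks to the standard ECH partition conditions applied to the orbits allowed by Lemma~\ref{properties}(7). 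Combined with $I(u_0)\geq 1$ from Step~1, this yields $I(u)\geq d\geq 2>0$.

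The main obstacle will be verifying the nonnegativity of the Conley--Zehnder correction in Step~2 uniformly across all distributions of the branching degree $d$ among the negative ends of $u_0$; this is where the precise Morse--Bott data of $\lambda_-$ supplied by Lemma~\ref{properties} enters. If this bookkeeping proves unwieldy, an alternative route is to apply the relative adjunction/writhe estimate directly to $u$ together with $g(F)\geq 2$ from Lemma~\ref{genus greater than 2} and the regularity of $J^+$ on the simple curve $u_0$, in parallel with the closed-curve argument of Lemma~\ref{closed curves}.
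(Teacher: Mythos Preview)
Your two-step strategy mirrors the closed-curve argument of Lemma~\ref{closed curves}, but both steps have genuine gaps, and the paper takes a different route.

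\textbf{Step 1.} You assert that $I(u_0)=0$ forces a Conley--Zehnder inequality that is ``incompatible'' with the Morse--Bott data of $\lambda_-$, but you never carry this out. The bound you derive, $\sum_i \op{CZ}_\tau(\gamma_i^{m_i})\geq 2+2c_\tau([u_0])$, involves $c_\tau([u_0])$, which you have no control over; and the negative ends of $u_0$ may land on orbits in $N_{(S_0,\hh_0)}$ with essentially arbitrary Conley--Zehnder indices, not only on the Morse--Bott tori. The action bound from Lemma~\ref{properties}(7) only excludes orbits in $int(N(K))\cup\mathcal{N}$; it says nothing about orbits in $N_{(S_0,\hh_0)}$. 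So no contradiction follows, and in fact the paper's argument never establishes (or uses) $I(u_0)\geq 1$.

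\textbf{Step 2.} Two problems. First, your appeal to Lemma~\ref{flu} to control $Q_\tau([u_0])$ is not valid here: Lemma~\ref{flu} computes the absolute intersection form on $H_2(W^+)$, which is what enters in the closed case via $A\cdot A$, but for a curve with negative ends $Q_\tau$ is a \emph{relative} self-intersection number depending on the trivialization $\tau$ and is not read off from $H_2(W^+)$. Second, the claim that the Conley--Zehnder correction $\mu_\tau^I(d\gamma)-d\,\mu_\tau^I(\gamma)$ is nonnegative is exactly the hard part; it is not a consequence of ``standard ECH partition conditions'' and can fail for individual elliptic orbits depending on the rotation number. You yourself flag this as the main obstacle. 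Your alternative route---applying adjunction/writhe directly to $u$---does not work either, since those estimates are for somewhere-injective curves, not multiple covers.

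The paper's proof bypasses all of this by invoking Hutchings' inequality from \cite[Section~5.1]{Hu2},
\[
I(dC)\;\geq\; d\,I(C)+\tfrac{d^2-d}{2}\bigl(2g(C)-2-\op{ind}(C)+h\bigr),
\]
which already packages the delicate writhe and Conley--Zehnder bookkeeping you are attempting by hand. Combined with $g(C)\geq 2$ from Lemma~\ref{genus greater than 2}, the index inequality $I(C)\geq \op{ind}(C)\geq 0$, and Lemma~\ref{flu}, this yields $I(dC)>0$. If you want to complete your argument, the right move is to cite this inequality rather than reprove it.
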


\begin{proof}
This follows from Lemma~\ref{flu} and the inequality 
\begin{equation}
I(dC)\geq dI(C) + {(d^2-d)\over 2}(2g(C)-2-\op{ind}(C)+h)
\end{equation}
from \cite[Section 5.1]{Hu2}, where $C$ is a simple curve, $\op{ind}(C)$ is the Fredholm index of $C$, and $h$ is the number of hyperbolic ends.
\end{proof}

\subsection{The map $\Phi^+$}

Let $J^+ \in \mathcal{J}_{X_+}^{reg}$. The chain map $\Phi^+$ is given as follows:
$$\Phi^+: (CF^+ (\Sigma,\bs\alpha,\bs\beta,z^f),\bdry)\rightarrow (ECC(M,\lambda_-),\bdry'),$$
$$[{\bf y},i]\mapsto\sum_{\gamma,A} \# \mathcal{M}^{\mathcal{F}=i,I_{X_+}=0}_{J^+} ({\bf y},\gamma,A) \cdot  \gamma,$$
where the summation is over all $\gamma \in \mathcal{O}_{\lambda_-}$ and $A \in H_2 (\check{X}_+,Z_{{\bf y},\gamma})$.  Here $\bdry'$ is the usual ECH differential on $ECC(M,\lambda_-)$.

By a combination of Lemma~\ref{lemma: energy bound} and the Gromov-Taubes compactness theorem (cf.\ Section~I.\ref{P1-subsection: compactness PFH case}), the sum in the definition of $\Phi^+$ is finite. Hence $\Phi^+$ is well-defined.

\begin{thm}\label{thm: chain map}
If $g(S)\geq 2$, then $\Phi^+$ is a chain map.
\end{thm}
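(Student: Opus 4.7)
The plan is the standard Floer-theoretic boundary analysis: consider the compactified moduli space $\overline{\mathcal{M}}^{\mathcal{F}=i,\,I_{W^+}=1}_{J^+}({\bf y},\gamma,A)$ of index-one $W^+$-curves from $[{\bf y},i]$ to $\gamma$, and show that its codimension-one boundary matches, term by term, the two contributions in $(\bdry' \circ \Phi^+ + \Phi^+ \circ \bdry_{HF})([{\bf y},i])$. Since we are working over $\F$, vanishing of the unsigned boundary count gives the chain map identity.

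First, I would apply the energy bound of Lemma~\ref{lemma: energy bound} together with Gromov-Taubes/SFT compactness (as used in Section~I.\ref{P1-subsection: compactness PFH case}) to extract a limiting holomorphic building from any sequence in $\mathcal{M}^{\mathcal{F}=i,\,I_{W^+}=1}_{J^+}({\bf y},\gamma,A)$. The bound $E(u)\le N_i$ simultaneously restricts which Reeb orbits of $R_{\lambda_-}$ can appear at intermediate levels: by Lemma~\ref{properties}(7), every orbit meeting $\op{int}(N(K)) \cup \mathcal{N}$ has $\lambda_-$-action at least $\tfrac{1}{2\delta}-\kappa$, so choosing $\delta$ sufficiently small (depending only on $i$) forces all orbits appearing in any limit to lie inside $N_{(S_0,\hh_0)}$.

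Next, I would use the genericity of $J^+\in\mathcal{J}^+_{reg}$ and the ECH index inequalities to restrict the codimension-one breakings to two standard types. At the positive end, one obtains a two-level configuration consisting of an $I_{HF}=1$ symplectization curve in $\R\times[0,1]\times\Sigma$ from $[{\bf y},i]$ to $[{\bf y}',j]$ glued to an $I_{W^+}=0$ $W^+$-curve from $[{\bf y}',j]$ to $\gamma$ with $\mathcal{F}=j$; positivity of intersection with the holomorphic divisor $\R\times[0,1]\times\{(z')^f\}$ partitions the filtration as $i=(i-j)+j$, and such strata match the terms of $\Phi^+\circ\bdry_{HF}([{\bf y},i])$. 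At the negative end, one obtains a two-level configuration consisting of an $I_{W^+}=0$ $W^+$-curve from $[{\bf y},i]$ to some $\gamma'$ glued to an $I=1$ ECH curve in $\R\times M$ from $\gamma'$ to $\gamma$, matching $\bdry'\circ\Phi^+([{\bf y},i])$. All other potential degenerations are excluded by results already in place: closed components by Lemma~\ref{closed curves}; vertical fiber components by the fact that $u\not\in\mathcal{M}^h_{J^+}$ forces $I_{W^+}(u)\ge 2+2g$, which is incompatible with $I_{W^+}=1$; nontrivial multiple covers with only negative ends by Lemma~\ref{curves with negative end} (which rests on the hypothesis $g(S)\ge 2$ via Lemma~\ref{genus greater than 2}); and escape into $N(K)\cup\mathcal{N}$ by the action argument above.

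The main obstacle is the gluing that converts the list of broken configurations into an actual count of ends of the one-dimensional moduli space. At the negative end this is standard Hutchings-Taubes ECH gluing for $I=1$ breakings, while at the positive end one needs the cobordism version of Lipshitz's gluing \cite{Li} for holomorphic multisections with Lagrangian boundary on $L_{\bs\alpha}$. Both gluing arguments have already been carried out in the closely analogous setting of \cite{CGH2,CGH3}, and the construction of $(W^+,\Omega^+)$ and the admissibility conditions (2)--(5) for $J^+$ have been arranged precisely so that those frameworks apply verbatim here, so the work reduces to checking compatibility rather than to genuinely new analysis.
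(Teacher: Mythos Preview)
Your overall architecture is correct and matches the paper's approach: the paper's own proof simply says ``Similar to that of Theorem~I.\ref{P1-thm: Phi is a chain map}, with slight modifications in view of Lemmas~\ref{closed curves} and \ref{curves with negative end},'' and your outline of compactness, index-one breaking analysis, exclusion of closed/fiber/multiply-covered components, and Hutchings--Taubes plus Lipshitz gluing is exactly that.

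There is, however, one genuine error in your second paragraph. You write that ``choosing $\delta$ sufficiently small (depending only on $i$) forces all orbits appearing in any limit to lie inside $N_{(S_0,\hh_0)}$,'' and you later list ``escape into $N(K)\cup\mathcal{N}$'' among the degenerations to be excluded. This is both impossible and unnecessary. The constant $\delta>0$ is a fixed parameter of the construction of $(W^+,\Omega^+,\lambda_-)$ in Section~\ref{subsection: W+}; the entire cobordism, the contact form, and hence the orbit set $\mathcal{O}_{\lambda_-}$ depend on this single choice. Since $\Phi^+$ must be a chain map on all of $CF^+$, i.e.\ for every $i\in\N$ simultaneously, you cannot let $\delta$ depend on $i$. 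Moreover, the target of $\Phi^+$ is the full complex $ECC(M,\lambda_-)$, which includes orbit sets built from orbits in $\op{int}(N(K))\cup\mathcal{N}$; there is no reason to exclude them, and the ECH differential $\bdry'$ legitimately connects such orbit sets. You have conflated the chain-map argument with the later \emph{restriction} argument (Lemma~\ref{lemma1} and Theorem~\ref{thm: control}), where $\delta$ is chosen small to control curves with $\mathcal{F}=0$ and ${\bf y}\subset S_0$; that argument plays no role here.

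If you simply delete the action/$\delta$ paragraph and the corresponding item from your list of excluded degenerations, what remains is a correct sketch. The only ingredients beyond Theorem~I.\ref{P1-thm: Phi is a chain map} that the $W^+$ setting requires are precisely Lemmas~\ref{closed curves} and \ref{curves with negative end} (both resting on $g(S)\ge 2$ via Lemma~\ref{genus greater than 2}), which you already invoke.
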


\begin{proof}
Similar to that of Theorem~I.\ref{P1-thm: Phi is a chain map}, with slight modifications in view of Lemmas~\ref{closed curves} and \ref{curves with negative end}.
\end{proof}

\begin{rmk}
One can define the twisted coefficient analog of $\Phi^+$, taking into account Lemma~\ref{flu}.
\end{rmk}

\subsection{Restriction to $\Phi$}

In this subsection $\delta$ still denotes the constant that appears in the construction of $\lambda_-$. Let $\mathcal{P}|_N$ be the subset of $\mathcal{P}$ consisting of orbits that are contained in $N$. Also let $\gamma_\theta\in \mathcal{P}_-$ be the orbit corresponding to $\theta\in \bdry S_0$.

\begin{lemma} \label{lemma1}
For $\delta>0$ sufficiently small, if $u\in \mathcal{M}^{\mathcal{F}=0}_{J^+} ({\bf y},\gamma)$, ${\bf y}\in \mathcal{S}_{\bs\alpha,\bs\beta}$, ${\bf y}\subset S_0$, and $\gamma \in \mathcal{O}$, then $\gamma$ is constructed from $\mathcal{P}|_N\cup \{ e',h'\}$.
\end{lemma}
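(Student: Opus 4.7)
The plan is to argue by contradiction using competing bounds on $A(\gamma):=\int_\gamma \lambda_-$. Suppose $\gamma$ had a simple orbit $\gamma_0\notin \mathcal{P}|_N\cup\{e',h'\}$ in its support. Since orbits on $T_-$ lie in $N$, and since Lemma~\ref{description2} excludes closed orbits in $\widetilde{\mathcal{B}}_1\setminus\{r_2=0\}$ by the irrationality of $\delta$, the orbit $\gamma_0$ would have to lie in $\operatorname{int}(N(K))\cup\mathcal{N}$ (possibly the binding orbit $K$ itself, which sits in $\operatorname{int}(N(K))$). Lemma~\ref{properties}(7) then forces $A(\gamma_0)\geq \frac{1}{2\delta}-\kappa$, with $\kappa$ independent of $\delta$.

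I would pair this with a $\delta$-independent upper bound on $A(\gamma)$, obtained in two steps. First, Lemma~\ref{lemma: energy bound} applied with $\mathcal{F}(u)=0$ yields $E(u)\leq N_0$ with $N_0$ independent of $\delta$: the $k\int_\Sigma\widetilde\omega$ term vanishes; the bound $\int_{[0,1]\times \mathbf{y}}\lambda_+\leq 4g$ follows from $\mathbf{y}\subset S_0$ together with $\tfrac12\leq f_t\leq 2$; and the Lagrangian contribution $\int_{\partial F}u^*\lambda_+$ is uniformly bounded using that $\Theta^+|_{L_{\bs\alpha}}$ is exact (Lemma~\ref{properties}(3)). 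Second, I would promote this to an action bound on $\gamma$ by running Stokes' theorem with a carefully chosen pair $(\phi,\psi_\epsilon)\in\mathcal{C}_+\times\mathcal{C}_-$ satisfying $\phi(+\infty)=1$, $\psi_\epsilon(0)=1$, $\psi_\epsilon(-\infty)=1-\epsilon$, and such that $\Omega^+_{\phi,\psi_\epsilon}$ is smooth and tames $J^+$. Using the primitive $\Theta^+_{\phi,\psi_\epsilon}$ yields
\[
 0\leq \int_F u^*\Omega^+_{\phi,\psi_\epsilon} = I_+(\phi)+I_{\mathrm{Lag}}-(1-\epsilon)A(\gamma),
\]
where the positive-end contribution $I_+(\phi)\leq \int_{[0,1]\times\mathbf{y}}(\lambda_++dt)\leq 6g$ and the Lagrangian contribution $I_{\mathrm{Lag}}$ are uniformly bounded. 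Rearranging and letting $\epsilon\to 0$ gives $A(\gamma)\leq 6g+C_{\mathrm{Lag}}=:N_1$, independent of $\delta$.

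Combining, we would have $\frac{1}{2\delta}-\kappa\leq A(\gamma_0)\leq A(\gamma)\leq N_1$, which fails once $\delta<1/(2(N_1+\kappa))$; since the lemma permits us to shrink $\delta$, this is the desired contradiction. The main technical hurdle is the construction of $(\phi,\psi_\epsilon)$: one must verify that $\psi_\epsilon$ with $\psi_\epsilon(-\infty)$ approaching $1$ can be extended smoothly across the corner-smoothed interface $\widetilde{\mathcal{B}}$ between $W_2^+$ and $W_{01}^+$ while keeping $\Omega^+_{\phi,\psi_\epsilon}$ tame. The pointwise tameness calculation $\Omega^+_{\phi,\psi_\epsilon}(v,J^+v)=\psi_\epsilon'(a^2+b^2)+\psi_\epsilon\, d\lambda_-(w,J^+w)\geq 0$ on $W_2^+$ (and its analog on $W^+\setminus W_2^+$) is automatic from $\phi',\psi_\epsilon'\geq 0$, so the whole argument reduces to tracking the interpolations built explicitly in Section~\ref{subsection: W+}.
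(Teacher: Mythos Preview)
Your argument is correct and follows essentially the same route as the paper: bound $\mathcal{A}_{\lambda_-}(\gamma)$ from above independently of $\delta$ via Stokes' theorem (using the exactness of $\Theta^+$ on $L_{\bs\alpha}$ and the uniform bounds $\tfrac12\leq f_t\leq 2$), then invoke Lemma~\ref{properties}(7) to force a contradiction for small $\delta$. The only simplification the paper makes is to observe that since $\mathcal{F}(u)=0$ one may take a representative of $[u]$ missing $N(S_{z^f})$ entirely, so the primitive $\Theta^+$ is globally available and the action bound $\mathcal{A}_{\lambda_-}(\gamma)\leq 4g+\int_{[0,1]\times\mathbf{y}}\lambda_+ +\int_{\partial F}u^*\lambda_+$ drops out of Stokes directly, bypassing your $\psi_\epsilon\to 1$ limit and the associated smoothness check across $\widetilde{\mathcal{B}}$.
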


\begin{proof}
This is similar to Lemma~\ref{lemma: energy bound}. Let $u : (F,j) \rightarrow (X_+,J^+)$ be a holomorphic map in $\mathcal{M}^{\mathcal{F}=0}_{J^+} ({\bf y},\gamma)$.  Since $\op{Im}(u)\cap S_{(z')^f}=\varnothing$, we may assume that $\op{Im}(u)\cap N(S_{z^f})=\varnothing$ by possibly modifying $u$ in its homology class. By considerations similar to those of Lemma~\ref{lemma: energy bound}, we obtain:
$$4g+\int_{[0,1]\times{\bf y}} \lambda_+ +\int_{\partial F} u^*\lambda_+ \geq \mathcal{A}_{\lambda_-}(\gamma),$$
where $\mathcal{A}_{\lambda_-}(\gamma)$ is the action of $\gamma$ with respect to $\lambda_-$.
Since the Lagrangian $L_{\bs\alpha}$ is exact by Lemma~\ref{properties}(3), we may take $u(\bdry F)\subset \bdry W_+$. Hence there exist upper bounds for $\int_{\partial F} u^*\lambda_+$ and $\int_{[0,1]\times{\bf y}} \lambda_+$ which are independent of ${\bf y}$ and $\delta$. By Lemma~\ref{properties}(7), all the orbit sets $\gamma$ in $int(N(K))\cup \mathcal{N}$ satisfy $\mathcal{A}_{\lambda_-}(\gamma)\geq {1\over 2\delta}-\kappa$. Hence, for $\delta>0$ sufficiently small, no negative end of $u$ is asymptotic to an orbit in $int(N(K))\cup \mathcal{N}$.
\end{proof}

\begin{lemma} \label{lemma2}
If $u\in \mathcal{M}^{\mathcal{F}=0}_{J^+} ({\bf y},\gamma)$, where ${\bf y}\in \mathcal{S}_{\bs\alpha,\bs\beta}$, ${\bf y}\subset S_0$, and $\gamma$ is constructed from $\mathcal{P}|_N \cup \{ e',h'\}$, then $\op{Im}(u)\subset W_+$ and $\gamma \in \mathcal{O}|_N$.
\end{lemma}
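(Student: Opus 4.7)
The plan is to argue by contradiction, using the $J^+$-holomorphic surfaces $C_\theta$, $\theta\in\partial S_0$, as holomorphic barriers. By condition~(5) in the definition of $(W^+,\Omega^+)$-admissibility, each $C_\theta$ is $J^+$-holomorphic, and their union $\mathcal{H}=\bigcup_{\theta\in\partial S_0}C_\theta$ separates $W^+$ into $W_+$ and a complementary component containing $S_{(z')^f}$.

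First I would locate the ends of $u$. At the positive end, the points ${\bf y}\subset\operatorname{int}(S_0)$ lie in the interior of $W_+\cap W_0^+$, since by Lemma~\ref{properties}(8) this piece fibers over $B^0_+$ with fiber $S_0$, and ${\bf y}$ avoids $\partial S_0$. At the negative end, orbits in $\mathcal{P}|_N$ sit in $W_+\cap W_2^+=(-\infty,0]\times N_{(S_0,\hh_0)}$. However, $T_+=\partial N(K)\subset N(K)\subset\widetilde{\mathcal{B}}\setminus N_{(S_0,\hh_0)}$, so any end of $u$ asymptotic to $e'$ or $h'$ exits $W_+$.

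Next, supposing $\gamma$ contains $e'$ or $h'$, the connectivity of $u$ forces $u$ to cross $\mathcal{H}$. Positivity of intersection of $J^+$-holomorphic curves in the $4$-manifold $W^+$ then yields $[u]\cdot[C_{\theta_0}]\geq 1$ for some generic $\theta_0\in\partial S_0$. The contradiction will come from a homological computation showing $[u]\cdot[C_\theta]=0$ for generic $\theta$. The ends of $u$ and $C_\theta$ are disjoint in this case: ${\bf y}$ avoids $\{\theta\}\subset\partial S_0$ at the positive end; orbits in $\operatorname{int}(N)$ miss the loop $\{\theta\}\times\mathbb{R}/2\mathbb{Z}\subset T_-$; the orbits $e,h\subset T_-$ miss this loop for generic $\theta$; and $e',h'\subset T_+$ are disjoint from $T_-$ altogether. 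Since $[C_{\theta_1}]-[C_{\theta_2}]$ bounds the $3$-chain $\bigcup_{\theta\in[\theta_1,\theta_2]}C_\theta$ in $\check W^+$ relative to the ends of $u$, the intersection number is independent of generic $\theta$, and by sliding $\theta$ to avoid the finite set of intersection values it equals $0$, contradicting positivity. Once $\gamma\in\mathcal{O}|_N$ is proved, the inclusion $\operatorname{Im}(u)\subset W_+$ follows from the same barrier argument, now with both ends of $u$ inside $W_+$.

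The main obstacle will be making the homological computation $[u]\cdot[C_\theta]=0$ rigorous: in particular one must verify that the $3$-chain $\bigcup_{\theta\in[\theta_1,\theta_2]}C_\theta$ is genuinely disjoint from the asymptotic ends of $u$ (using the action estimate of Lemma~\ref{properties}(7), already exploited in Lemma~\ref{lemma1}, to control behavior near the Morse-Bott tori $T_\pm$), and that the sliding argument is compatible with the relative homology group $H_2(\check W^+,Z_{{\bf y},\gamma})$ identified via Lemma~\ref{flu}. An alternative route identifies $[C_\theta]$ with (a combination of) classes whose intersection with $[u]$ is controlled by $\mathcal{F}(u)=0$, bypassing the sliding argument by a direct Mayer-Vietoris computation.
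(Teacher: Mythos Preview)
Your sliding argument contains a circularity. You correctly observe that $[C_{\theta_1}]-[C_{\theta_2}]$ bounds a $3$-chain (away from the finitely many $\theta$'s corresponding to $e,h$ and the ends of $u$), so the intersection number $\langle u, C_\theta\rangle$ is independent of generic $\theta$. But then the assertion that one can ``slide $\theta$ to avoid the finite set of intersection values'' to get $0$ is exactly what invariance forbids: if $\langle u, C_{\theta_0}\rangle\geq 1$ at one value, it is $\geq 1$ at every generic value. There is no reason for $u\cap\mathcal{H}$, which is a $1$-dimensional subset of the domain, to project to a finite (or even proper) subset of $\partial S_0$; indeed, a component of $u$ running from ${\bf y}\subset S_0$ to $e'\subset T_+$ may well meet every $C_\theta$. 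So the sliding step proves nothing, and you are left needing an \emph{independent} computation that $\langle u, C_\theta\rangle=0$.

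The paper supplies exactly that computation, and it is your ``alternative route''. One caps off $C_{\theta_0}$ at a very negative level $s_0$ by a meridian disk $D_{\theta_0,s_0}\subset\{s_0\}\times(\mathcal{N}\cup N(K))$ bounded by $\{\theta_0\}\times\R/2\Z$ and chosen disjoint from $e',h'$. Since $\gamma$ is built from $\mathcal{P}|_N\cup\{e',h'\}$, the ends of $u$ miss $D_{\theta_0,s_0}$ for $s_0\ll 0$, so $\langle u, C_{\theta_0,s_0}\rangle=\langle u, C_{\theta_0}\rangle>0$. On the other hand $[C_{\theta_0,s_0}]=[S_{(z')^f}]$ in $H_2(\check W^+,\partial\check W^+ - Z_{{\bf y},\gamma})$, whence $\langle u, C_{\theta_0,s_0}\rangle=\mathcal{F}(u)=0$, the desired contradiction. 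Note also that the paper runs the contradiction from the single hypothesis $u(F)\not\subset W_+$ rather than first isolating the case where $\gamma$ contains $e'$ or $h'$; this is slightly cleaner and avoids any appeal to connectivity of $u$, which you should not assume.
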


\begin{proof}
Let $u\in \mathcal{M}^{\mathcal{F}=0}_{J^+} ({\bf y},\gamma)$ such that $u(F)\not\subset W_+$.

Suppose that $u$ is not a multi-level Morse-Bott building. Then $u(F) \cap C_{\theta_0} \neq \varnothing$ for some $\theta_0 \in \partial S_0 -\bs\alpha -\bs\beta$, and moreover we may assume that $\gamma_{\theta_0}$ is not an asymptotic limit of $u$ at $-\infty$. Since $J^+$ is admissible, all the curves $C_\theta$ are holomorphic. Hence $\langle u(F),C_{\theta_0}\rangle> 0$ by the positivity of intersections.

Let $D_{\theta}$, $\theta\in \bdry S_0$, be a meridian disk of the solid torus $\mathcal{N}\cup N(K)$ that is bounded by $\{ \theta \} \times \R /2\Z$ and is disjoint from $\{ e',h'\}$, and let $D_{\theta , s} = \{ s\} \times D_{\theta} \subset X_+^2$, where $s<0$ and $\theta\in \bdry S_0$. We then define $$C_{\theta ,s_0} := (C_\theta -\{ s<s_0\} )\cup D_{\theta ,s_0 },$$
where $s_0<0$. When $s_0$ is sufficiently negative, the curve $u(F)$ intersects $C_{\theta_0 ,s_0 }$ only in the region $C_{\theta_0} -\{ s<s_0 \}$, since $\gamma$ is constructed from $\mathcal{P}|_N \cup \{ e',h'\}$ and $D_{\theta_0}$ does not intersect $e'$ and $h'$. Hence $\langle u(F),C_{\theta_0 ,s_0}\rangle >0$. Now, since $[S_{(z')^f}]=[C_{\theta_0 ,s_0}]$ in $H_2 (\check{X}_+, \partial \check{X}_+ -Z_{{\bf y},\gamma} )$, we have
$$\mathcal{F}(u)=\langle [u],S_{(z')^f}\rangle=\langle [u],C_{\theta_0 ,s_0}\rangle>0.$$
This contradicts our assumption that $\mathcal{F}(u)=0$.

If $u$ is a multi-level Morse-Bott building, then we need to make the appropriate modifications (left to the reader), but the same argument goes through. For example, we need to replace $C_{\theta_0}$ by a multi-level building $C_{\theta_0}\cup (\R\times\gamma_{\theta_0})\cup\dots\cup (\R\times\gamma_{\theta_0})$.  Note that if $u$ is a Morse-Bott building, then it could have a component $u_1$ with a negative end that limits to some $\gamma_{\theta_1}$, followed by a gradient trajectory from $\theta_1$ to $\theta_2$, and then by a component $u_2$ with a positive end that limits to $\gamma_{\theta_2}$.
\end{proof}

\begin{thm}\label{thm: control}
For $\delta>0$ sufficiently small, if $u\in \mathcal{M}^{\mathcal{F}=0}_{J^+} ({\bf y},\gamma)$, ${\bf y}\in \mathcal{S}_{\bs\alpha,\bs\beta}$, ${\bf y}\subset S_0$, and $\gamma \in \mathcal{O}$, then $\op{Im}(u)\subset W_+$ and $\gamma\in\mathcal{O}|_N$.
\end{thm}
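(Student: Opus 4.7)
The plan is to derive Theorem~\ref{thm: control} as an immediate corollary by concatenating Lemmas~\ref{lemma1} and \ref{lemma2}; no new geometric input beyond what those two lemmas already provide is required.

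First I would apply Lemma~\ref{lemma1}. The hypotheses of the theorem match those of the lemma verbatim, so upon choosing $\delta>0$ sufficiently small we conclude that the asymptotic orbit set $\gamma$ is constructed from $\mathcal{P}|_N\cup\{e',h'\}$. This is the only place the smallness of $\delta$ enters: by Lemma~\ref{properties}(7) all orbits inside $int(N(K))\cup\mathcal{N}$ have $\lambda_-$-action at least $\tfrac{1}{2\delta}-\kappa$, while the upper bound on the total action of the negative ends of $u$ coming from the exactness of $\Theta^+$ on $L_{\bs\alpha}$ (Lemma~\ref{properties}(3)) together with the $\delta$-independent bounds on $\int_{[0,1]\times{\bf y}}\lambda_+$ and $\int_{\bdry F}u^*\lambda_+$ is uniform in $\delta$. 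Taking $\delta$ small therefore forces every negative end of $u$ into $N\cup T_+$.

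Second, I would feed the restricted form of $\gamma$ into Lemma~\ref{lemma2}, whose remaining hypotheses (namely $u\in\mathcal{M}^{\mathcal{F}=0}_{J^+}({\bf y},\gamma)$ with ${\bf y}\subset S_0$ and $\gamma$ constructed from $\mathcal{P}|_N\cup\{e',h'\}$) are now all satisfied. Its conclusion is exactly $\op{Im}(u)\subset W_+$ and $\gamma\in\mathcal{O}|_N$, which is precisely the conclusion of the theorem. Note that while Lemma~\ref{lemma1} still permits the Morse-Bott orbits $e'$, $h'$ on $T_+=\bdry N(K)$, these are disjoint from $N=N_{(S_0,\hh_0)}$; Lemma~\ref{lemma2} upgrades the allowed $\gamma$ via its positivity-of-intersection argument against the divisors $C_{\theta,s_0}$ (homologous to $S_{(z')^f}$ in $H_2(\check{W}^+,\bdry\check{W}^+-Z_{{\bf y},\gamma})$) to orbit sets whose components all lie in $N$, automatically ruling out $e'$ and $h'$.

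I do not anticipate a serious obstacle at the theorem level: all of the technical work, including the treatment of multi-level Morse-Bott buildings, is already carried out inside the two lemmas. The only check worth making explicit is that the condition ${\bf y}\subset S_0$ in the theorem is exactly the hypothesis required to invoke both lemmas in sequence, and that Lemma~\ref{lemma1}'s conclusion matches Lemma~\ref{lemma2}'s hypothesis precisely; both matches are immediate from the statements.
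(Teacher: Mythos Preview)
Your proposal is correct and matches the paper's own proof, which simply states that the theorem follows from Lemmas~\ref{lemma1} and \ref{lemma2}. Your additional commentary on how $\delta$ enters and how Lemma~\ref{lemma2} eliminates $e',h'$ is accurate and does not deviate from the paper's approach.
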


\begin{proof}
Follows from Lemmas~\ref{lemma1} and \ref{lemma2}.
\end{proof}

The restriction $\Phi$ of $\Phi^+$ to $(W_+,J_+)$ is given as follows:
$$\Phi: \widehat{CF}(\Sigma,\bs\alpha,\bs\beta,z^f)\to ECC_{2g}(M,\lambda_-),$$
$$[{\bf y},0]\mapsto \sum_{\gamma,A} \#\mathcal{M}_{J_+}^{I_{W_+}=0} ({\bf y},\gamma,A)\cdot\gamma,$$
where $\mathcal{M}_{J_+}^{I_{W_+}=0}({\bf y},\gamma,A)$ is the subset of $\mathcal{M}_{J^+}({\bf y},\gamma,A)$ consisting of curves with image in $W_+$.

\begin{thm} \label{thm: Phi is a quasi-isom}
$\Phi$ is a quasi-isomorphism.
\end{thm}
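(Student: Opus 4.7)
The plan is to identify the map $\Phi$ defined here with the quasi-isomorphism $\Phi\colon \widehat{CF}(-M)\to \widehat{ECC}(N,\bdry N)$ constructed in \cite{CGH2, CGH3}, and then to invoke the main theorem of those papers.

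First I would verify that the domain and target match the ones used in \cite{CGH2, CGH3}. The subcomplex of $CF^+(\Sigma,\bs\alpha,\bs\beta,z^f)$ generated by pairs $[{\bf y},0]$ with differential $\bdry_0$ is, by Lipshitz~\cite{Li}, a model for $\widehat{HF}(-M)$ matching the Heegaard Floer complex of Section~I.\ref{P1-subsubsection: Heegaard diagram compatible with S h}. For the target, Theorem~\ref{thm: control} is the crucial input: whenever ${\bf y}\subset S_0$ and $\mathcal{F}(u)=0$, the image of $u$ is contained in $W_+$ and $\gamma\in \mathcal{O}|_N$. Consequently $\Phi$ takes values in the subcomplex of $ECC_{2g}(M,\lambda_-)$ generated by orbit sets built from $\mathcal{P}|_N\cup\{e',h'\}$, which is canonically identified with $\widehat{ECC}(N,\bdry N)$ from \cite{CGH1}, and whose homology is $\widehat{ECH}(M)$.

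Next I would match the two chain maps. Section~\ref{subsection: Lagrangian} asserts that $W_+\subset W^+$ is diffeomorphic to the cobordism used in Section~I.\ref{P1-subsection: symplectic cobordisms} to define the original $\Phi$. By the admissibility condition on $J^+$, the restriction $J_+$ is $C^l$-close to a regular admissible $J_+^0$ of the type used in Definition~I.\ref{P1-defn: admissible J for W plus}. Relative homology classes $A$ and the index $I_{W_+}$ are defined from data intrinsic to $W_+$. It follows that the moduli spaces $\mathcal{M}_{J_+}^{I_{W_+}=0}({\bf y},\gamma,A)$ counted here are (at worst cobordant to) those used in \cite{CGH2, CGH3}, so the two chain maps induce the same map on homology. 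Since the original $\Phi_*$ was shown there to be an isomorphism $\widehat{HF}(-M)\stackrel\sim\to \widehat{ECH}(M)$, the same holds for the $\Phi$ defined here.

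The main obstacle will be the careful comparison of ECH indices. The index $I_{W_+}$ here is the restriction of $I_{W^+}$ for the ambient cobordism, and therefore depends a priori on trivializations at the positive Heegaard end $[0,1]\times\Sigma$, at the negative contact end $\widetilde{\mathcal{B}}\simeq M$, and at the Morse-Bott tori $T_+, T_-$ (with their $S^1$-families $\mathcal{P}_+,\mathcal{P}_-$ interpreted as the pairs $e',h'$ and $e,h$). One must check that, for relative homology classes $A$ supported in $W_+$, the relative Chern class, writhe/linking, and $Q$-contributions computed in $W^+$ agree with their counterparts computed in $W_+$ alone, and that the Morse-Bott bookkeeping at $\mathcal{P}_-$ is compatible with the convention of \cite{CGH2, CGH3}. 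Modulo this bookkeeping — which amounts to a direct comparison of two isomorphic cobordisms — Theorem~\ref{thm: Phi is a quasi-isom} is an immediate consequence of the main results of \cite{CGH2, CGH3}.
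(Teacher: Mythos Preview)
Your proposal is essentially correct and follows the same line as the paper: identify the $\Phi$ defined here with the chain map of \cite{CGH2,CGH3} (using that $J_+$ is $C^l$-close to a regular $J_+^0$ of the type used there, so the corresponding counts agree on homology via a standard continuation/perturbation argument, which the paper phrases as ``considerations similar to those of Theorem~I.\ref{P1-thm: equiv of ECH and PFH}''), and then invoke Theorem~II.\ref{P2-thm: isomorphism}. One small point: you invoke Theorem~\ref{thm: control} to pin down the target, but this is unnecessary here since $\Phi$ is \emph{defined} by counting curves with image in $W_+$, so $\gamma\in\mathcal{O}|_N$ is automatic; Theorem~\ref{thm: control} is instead what lets one identify $\Phi^+|_{\mathcal{F}=0}$ with $\Phi$, which is used later in Section~\ref{completion}.
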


\begin{proof}
The almost complex structure $J_+$ is sufficiently close to $J_+^0$.  For $J_+^0$, the analogous chain map was shown to be a quasi-isomorphism (Theorem~II.\ref{P2-thm: isomorphism}). Considerations similar to those of Theorem~I.\ref{P1-thm: equiv of ECH and PFH} imply that $\Phi$ is a quasi-isomor\-phism.
\end{proof}

\subsection{Commutativity with the $U$-map}

Let $z^b$ be a point in $\R \times [0,1]$ with $t$-coordinate ${1\over 2}$ and let $z=(z^b, z^f ) \in X$. Let $U_z$ be the geometric $U$-map with respect to $z$ on the HF side. On the ECH side, let $z'= (s,z^M)$ be a generic point in $\R \times int(N(K))$ near the binding $K$. We define $U'=U'_{z'}$ so that $\langle U'(\gamma),\gamma'\rangle$ is the count of $I_{ECH} =2$ curves in the symplectization $(\R \times M,J')$ from $\gamma$ to $\gamma'$ that pass through $z'$.

\begin{thm}\label{thm: commutativity}
There exists a chain homotopy
$$K : CF^+ (\Sigma,\bs\alpha,\bs\beta,z^f) \rightarrow ECC (M,\lambda_-)$$ which satisfies
$$U' \circ \Phi^+ -\Phi^+ \circ U_z =\partial' \circ K +K\circ \partial.$$
\end{thm}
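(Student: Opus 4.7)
The strategy is the standard Floer-theoretic argument for commutativity with a geometric operator: build $K$ by counting $I_{W^+}=1$ curves through a $1$-parameter family of marked points interpolating between $z$ and $z'$, and derive the homotopy relation by analyzing the boundary of the associated $1$-dimensional parametrized moduli space.

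Concretely, choose a smooth path $\{z_\tau\}_{\tau \in [0,1]}$ in (a compactification of) $W^+$ such that $z_0$ approaches the HF marked point $z$ by sending its $s$-coordinate to $+\infty$ along the positive end, $z_1$ approaches the ECH marked point $z' = (s, z^M)$ by sending its $s$-coordinate to $-\infty$ along the negative end, and $z_\tau$ avoids $S_{(z')^f}$, $L_{\bs\alpha}$, and the characteristic surfaces $C_\theta$ for all $\tau \in (0,1)$. Concurrently, pick a family of $(W^+, \Omega^+)$-admissible almost complex structures $J^+_\tau$ which are generic $C^l$-small perturbations of $J^+$ supported in a neighborhood of $z_\tau$, agreeing with $J$ (resp.\ $J'$) on the positive (resp.\ negative) end in the limit $\tau\to 0$ (resp.\ $\tau\to 1$). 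Then define
$$K([{\bf y}, i]) = \sum_{\gamma, A} \# \mathcal{M}^{\mathcal{F}=i,\, I_{W^+}=1,\, z_\tau}_{J^+_\tau}({\bf y}, \gamma, A) \cdot \gamma,$$
where the moduli space consists of pairs $(u, \tau)$ with $u$ a $J^+_\tau$-holomorphic $W^+$-curve from ${\bf y}$ to $\gamma$ in class $A$ of ECH index $1$ and filtration level $\mathcal{F}(u) = i$ passing through $z_\tau$. As in Theorem~\ref{thm: U-map}, genericity of $J^+_\tau$ and positivity of intersection with the holomorphic divisor $S_{(z')^f}$ make this a $0$-dimensional, transversally cut out count, and the bound of Lemma~\ref{lemma: energy bound} extends uniformly in $\tau$.

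The commutativity formula follows from analyzing the boundary of the $1$-dimensional parametrized moduli space $\bigcup_{\tau \in [0,1]} \mathcal{M}^{\mathcal{F}=i,\, I_{W^+}=2,\, z_\tau}_{J^+_\tau}({\bf y}, \gamma, A)$. At $\tau = 0$, an SFT-type neck-stretching compactness argument at the positive end degenerates a curve into an $I_{HF}=2$ piece through $z$ in the symplectization $(\R \times [0,1] \times \Sigma, J)$ stacked above an $I_{W^+}=0$ $\Phi^+$-curve in $W^+$; the gluing theorem identifies these contributions with $\Phi^+ \circ U_z$. Symmetrically, at $\tau = 1$, the curve degenerates into an $I_{W^+}=0$ $\Phi^+$-curve stacked above an $I_{ECH}=2$ symplectization curve through $z'$ in $(\R \times M, J')$, giving $U' \circ \Phi^+$. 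Interior codimension-one strata at $\tau \in (0,1)$ correspond to the usual Floer breakings at the positive end (yielding $K \circ \partial$) or at the negative end (yielding $\partial' \circ K$). Equating the mod-$2$ total boundary count to zero yields the claimed identity.

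The main obstacle is to verify that these are the only codimension-one degenerations and that the endpoint limits really recover the two composed maps. This requires ruling out fiber-bubbles and closed-curve bubbles passing through $z_\tau$ in low index, handled by the $g(S)\geq 2$ hypothesis together with the parametric analogues of Lemmas~\ref{closed curves} and \ref{curves with negative end}; confirming that the filtration $\mathcal{F}=i$ is preserved under gluing, which is automatic from positivity of intersections with $S_{(z')^f}$; and executing the stretching/gluing analysis cleanly at the Hamiltonian-type positive end and the contact-type negative end of $W^+$. Granted these ingredients, the argument runs in direct analogy with the proof of Theorem~\ref{thm: U-map}, but carried out in the cobordism $W^+$ rather than the symplectization $W$.
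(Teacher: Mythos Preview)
Your proposal is correct and follows essentially the same approach as the paper: both construct $K$ by counting $I_{W^+}=1$ curves through a $1$-parameter family of marked points sliding from the positive end to the negative end, with a corresponding family of small perturbations of $J^+$, and derive the identity from the boundary of the parametrized $I_{W^+}=2$ moduli space. The paper's proof is in fact sketchier than yours; the one ingredient it singles out that you only allude to is that the gluing at the ECH (negative) end is not standard Floer gluing but requires the Hutchings--Taubes obstruction bundle analysis of \cite{HT1,HT2} to handle branched covers of trivial cylinders appearing between levels.
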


\begin{proof}
The commutativity of $\Phi^+$ with the $U$-maps up to homotopy is obtained by moving the point constraint in the cobordism $X_+$ from $s=+\infty$ to $s=-\infty$.

The $1$-parameter family of points $(z_\tau)_{\tau \in \R}$ is chosen as follows:  For $\tau\geq 0$, let $z_\tau =(z_\tau^b,z^f)$, where $z^b_\tau$ approaches $(s,t)=(+\infty,{1\over 2})$ as $\tau \rightarrow +\infty$ and $z_0^b$ is near the center of the disk $D^2=\{r_2\leq 1\}$. Next, for $\tau \in [-1,0]$, let $z_\tau=(z_0^b,z^f_\tau)$ so that $(z_0^b,z_{-1}^f)\in \{0\}\times \widetilde{\mathcal{B}}$ is near the binding $K$. For $\tau\leq -1$, let $z_\tau=(\tau+1,z^M) \in (-\infty ,0] \times M$, where $z^M   \in M=\widetilde{\mathcal{B}}$ is a point near the binding. Finally, we consider a small perturbation of $(z_\tau)_{\tau \in \R}$ to make it generic (without changing its name).

We define the $1$-parameter family of almost complex structures $(J_\tau^+)_{\tau\in \R}$ so that $J_\tau^+$ is $C^l$-close to $J^+$ and agrees with $J^+$ outside a small neighborhood of $z_\tau$.

The rest of the chain homotopy argument is standard, with the exception of the obstruction theory that was carried out in \cite{HT1,HT2}.
\end{proof}

\begin{thm}\label{thm: K}
For $\delta>0$ sufficiently small, if ${\bf y}\in \mathcal{S}_{\bs\alpha,\bs\beta}$ and ${\bf y}\subset S_0$, then $K([{\bf y} ,0])=0$.
\end{thm}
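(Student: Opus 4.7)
The plan is to show that any curve $u$ contributing to $K([\mathbf{y}, 0])$ must have image contained in $W_+$, and then to observe that the 1-parameter family of marked points $z_\tau$ never enters $W_+$. By construction of $K$ in the proof of Theorem~\ref{thm: commutativity}, such a $u$ is $J^+_\tau$-holomorphic from $[\mathbf{y}, 0]$ to some $\gamma \in \mathcal{O}$ and passes through $z_\tau$ for some $\tau \in \R$. Tracking the intersection with the holomorphic divisor $S_{(z')^f}$ across the chain homotopy equation $U'\circ \Phi^+ -\Phi^+\circ U_z =\partial' \circ K +K\circ \partial$ forces $\mathcal{F}(u) = 0$, since both $U'\Phi^+$ and $\Phi^+U_z$ contribute total $\mathcal{F}$-filtration equal to $i$ when applied to the input $[\mathbf{y}, i]$, so the same must hold for the terms in $\partial'K + K\partial$.

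To establish the containment $u(F)\subset W_+$, I would extend Theorem~\ref{thm: control} from the fixed structure $J^+$ to the perturbed family $J^+_\tau$. The supporting lemmas transfer with little change: the action estimate of Lemma~\ref{lemma1}, based on Stokes' theorem applied to a primitive of $\Omega^+$, depends only on $\lambda_+$ and is insensitive to the localized perturbation. Lemma~\ref{lemma2}, which invokes positivity of intersection with $S_{(z')^f}$ and with the auxiliary surfaces $C_{\theta, s_0}$, requires only that these divisors remain $J^+_\tau$-holomorphic. Since the perturbation is supported in $N(z_\tau)$, by choosing $z_\tau$ (and $\theta \in \partial S_0$, $s_0<0$) generically we can arrange $N(z_\tau) \cap S_{(z')^f} = \varnothing$ and $N(z_\tau) \cap C_{\theta, s_0} = \varnothing$, and positivity carries over verbatim.

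The final step is to verify that $z_\tau \notin W_+$ for every $\tau \in \R$. For $\tau \geq 0$ the fiber coordinate of $z_\tau$ is $z^f \in S_{1/2}$, while $W_+ \cap W_0^+$ has fiber $S_0$ and $W_+ \cap W_1^+ = \varnothing$. For $\tau \in [-1, 0]$ we can arrange the path $z^f_\tau$ to stay in $S_{1/2}$ (avoiding $\bs\alpha$, $\bs\beta$, and $S_0$) before meeting $\widetilde{\mathcal{B}}$ near the binding. For $\tau \leq -1$ the point $z_\tau = (\tau + 1, z^M)$ lies over $z^M \in N(K) \cup \mathcal{N}$, while by Lemma~\ref{properties}(8), $W_+ \cap W_2^+$ is contained in $N_{(S_0, \hh_0)}$, which is disjoint from $N(K) \cup \mathcal{N}$. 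In every case $z_\tau \notin W_+$, contradicting $z_\tau \in u(F) \subset W_+$. Hence $K([\mathbf{y}, 0]) = 0$.

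The main technical hurdle is extending Lemmas~\ref{lemma1} and \ref{lemma2} to the 1-parameter family $J^+_\tau$; concretely, one must arrange the genericity conditions that keep $S_{(z')^f}$ and all the $C_{\theta, s_0}$ holomorphic to hold uniformly along the entire path $(z_\tau)$. Because these divisors form a thin set in $W^+$, this should not be a serious difficulty, but it is the one place where the earlier results do not transfer by direct citation; the remainder of the proof is a direct inspection of the defining path $(z_\tau)$.
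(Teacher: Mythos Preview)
Your proposal is correct and follows essentially the same approach as the paper: curves contributing to $K([\mathbf{y},0])$ have $\mathcal{F}=0$, so by (the family version of) Theorem~\ref{thm: control} their image lies in $W_+$, contradicting the fact that they pass through $z_\tau\notin W_+$. You are more explicit than the paper in two respects---spelling out why Lemmas~\ref{lemma1} and~\ref{lemma2} extend to the perturbed family $J^+_\tau$, and verifying case-by-case that $z_\tau\notin W_+$---but these are elaborations the paper leaves implicit rather than genuine differences in strategy. One minor remark: your filtration argument for $\mathcal{F}(u)=0$ via the chain homotopy identity is correct but indirect; it is simpler to note that, by construction (parallel to the definition of $\Phi^+$), the coefficient $\langle K([\mathbf{y},i]),\gamma\rangle$ counts curves with $\mathcal{F}=i$, so for $i=0$ the constraint $\mathcal{F}(u)=0$ is immediate.
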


\begin{proof}
Similar to the proof of Theorem~\ref{thm: control}.
The coefficient $\langle K([{\bf y} ,0]), \gamma \rangle$ is given by the count of $I_{X_+}=1$ curves from ${\bf y}$ to $\gamma$ that pass through $z_\tau$ for some $\tau$ and do not intersect $S_{(z')^f}$. If such a curve $u$ exists, then $\op{Im}(u) \not\subset W_+$.  This is not possible by Theorem~\ref{thm: control}.
\end{proof}

\section{Proof of Theorem~\ref{thm: HF+=ECH}}\label{section: proof}

In this section we prove Theorem~\ref{thm: HF+=ECH}. In Section~\ref{subsection: algebraic} we prove an algebraic result (Theorem~\ref{thm: algebraic}) which is sufficient to prove that $\Phi^+$ is a quasi-isomorphism.  The conditions of Theorem~\ref{thm: algebraic} are verified in Section~\ref{completion}.

\subsection{Some algebra}\label{subsection: algebraic}

\begin{defn}
Let $(A, d)$ be a chain complex. We say that a chain map $f \colon A \to A$ is
{\em homologically almost nilpotent} (abbreviated {\em han}) if for every $x \in H(A)$
there exists $n \in \N$ such that $(f_*)^n(x)=0$.
\end{defn}

Prototypical examples of {\em han} maps are the $U$-maps in $HF^+$ and $ECH$.

Let $(A, d_A)$ and $(B, d_B)$ be chain complexes with {\em han} maps
$U_A \colon A \to A$ and $U_B \colon B \to B$ and let $\Phi^+ \colon A \to B$ be a chain map such that the diagram
$$\xymatrix{
A \ar[r]^{\Phi^+} \ar[d]_{U_A} & B \ar[d]^{U_B} \\
A \ar[r]^{\Phi^+} & B
}$$
commutes up to a chain homotopy $K$.  We form a chain complex $D=A \oplus A \oplus B \oplus B$ with differential
$$d_D = \left ( \begin{matrix}
d_A &  0    & 0     & 0 \\
U_A & d_A & 0     & 0 \\
\Phi^+  & 0     & d_B & 0 \\
K      & \Phi^+ & U_B & d_B
\end{matrix} \right ).$$

Given a chain map $f$, we denote its mapping cone by $C(f)$.

\begin{lemma}
There is an exact triangle:
\begin{equation} \label{eqn: first triangle}
\xymatrix{
H(C(U_A)) \ar[rr]^{(\Phi_{alg} )_*} & &  H(C(U_B)) \ar[dl] \\
& H(D) \ar[ul] &
}
\end{equation}
where $\Phi_{alg} = \left ( \begin{matrix} \Phi^+ & 0 \\ K & \Phi^+ \end{matrix} \right )$.
\end{lemma}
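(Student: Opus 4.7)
The plan is purely algebraic, and in fact tautological once the notation is unwound: I will identify the chain complex $(D,d_D)$ with the mapping cone of $\Phi_{alg}$, regarded as a chain map between the mapping cones $C(U_A)$ and $C(U_B)$, whereupon the desired exact triangle is simply the standard long exact sequence of a mapping cone.

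First, I unpack the cones: by convention $C(U_A)=A\oplus A$ carries the differential $\bigl(\begin{smallmatrix} d_A & 0\\ U_A & d_A\end{smallmatrix}\bigr)$, and similarly $C(U_B)$ carries $\bigl(\begin{smallmatrix} d_B & 0\\ U_B & d_B\end{smallmatrix}\bigr)$. I then verify that $\Phi_{alg}\colon C(U_A)\to C(U_B)$ is a chain map by expanding the matrix products $d_{C(U_B)}\Phi_{alg}$ and $\Phi_{alg}\,d_{C(U_A)}$. The two diagonal entries of the resulting equation both reduce to $d_B\Phi^+=\Phi^+ d_A$, which is Theorem~\ref{thm: chain map}, while the off-diagonal entry reduces to $U_B\Phi^+ + \Phi^+ U_A = d_B K + K d_A$. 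This is precisely the chain-homotopy identity of Theorem~\ref{thm: commutativity}, using $\F=\Z/2$ coefficients so that the sign in the relation is immaterial.

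Next, split $D = C(U_A)\oplus C(U_B)$ and regard $d_D$ as a $2\times 2$ block matrix of $2\times 2$ blocks. A glance at the explicit form of $d_D$ shows that the two diagonal $2\times 2$ blocks are exactly the cone differentials $d_{C(U_A)}$ and $d_{C(U_B)}$, the upper-right block is zero, and the lower-left block is exactly $\Phi_{alg}=\bigl(\begin{smallmatrix}\Phi^+ & 0\\ K & \Phi^+\end{smallmatrix}\bigr)$. Therefore $(D,d_D)$ is literally the mapping cone $C(\Phi_{alg})$. The triangle \eqref{eqn: first triangle} is then the standard exact triangle $H(C(U_A))\xrightarrow{(\Phi_{alg})_*} H(C(U_B))\to H(C(\Phi_{alg}))\to H(C(U_A))$ associated to a mapping cone.

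No serious obstacle is anticipated; the only delicate point is the need to invoke $\F=\Z/2$ coefficients in the off-diagonal block of the chain-map check, which is consistent with the hypothesis declared in the abstract and introduction.
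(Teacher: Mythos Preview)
Your proof is correct and follows exactly the paper's approach: the paper's one-line proof simply observes that ``from the shape of $d_D$, it is evident that $(D,d_D)$ is the mapping cone of $\Phi_{alg}\colon C(U_A)\to C(U_B)$,'' which is precisely what you verify in detail. One minor remark: this lemma sits in an abstract algebraic framework where $\Phi^+$ being a chain map and $K$ a chain homotopy are \emph{hypotheses}, so your appeals to Theorems~\ref{thm: chain map} and~\ref{thm: commutativity} are out of place here---those theorems are what later justify \emph{applying} this lemma to the geometric situation, not inputs to its proof.
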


\begin{proof}
From the shape of $d_D$, it is evident that $(D, d_D)$ is the mapping cone of
$\Phi_{alg} \colon C(U_A) \to C(U_B)$.
\end{proof}

\begin{lemma}
There is an exact triangle:
\begin{equation} \label{eqn: second triangle}
\xymatrix{
H(C(\Phi^+)) \ar[rr]^{{(U_{\Phi^+})}_*} & &  H(C(\Phi^+)) \ar[dl] \\
& H(D) \ar[ul] &
}
\end{equation}
where $U_{\Phi^+}= \left ( \begin{matrix} U_A & 0 \\ K & U_B \end{matrix} \right )$.
\end{lemma}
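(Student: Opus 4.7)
The plan is to mimic the proof of the first exact triangle, but regroup the four summands of $D$ differently. In the first lemma, one writes $D = (A\oplus A)\oplus(B\oplus B) = C(U_A)\oplus C(U_B)$; here I will instead write $D = (A\oplus B)\oplus(A\oplus B) = C(\Phi^+)\oplus C(\Phi^+)$, where the first $C(\Phi^+)$ is built out of the first $A$-summand and the first $B$-summand of $D$, and similarly for the second. Concretely, I will permute the block matrix of $d_D$ via the row/column permutation $(1,2,3,4)\mapsto(1,3,2,4)$, which is the change of basis implementing this regrouping.

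A direct inspection of the resulting $4\times 4$ matrix shows that the two diagonal $2\times 2$ blocks are each equal to
$$d_{C(\Phi^+)}=\left(\begin{matrix} d_A & 0 \\ \Phi^+ & d_B\end{matrix}\right),$$
coming respectively from the $(d_A,\Phi^+,d_B)$ entries in the first and fourth rows of the original $d_D$ and from those in the second and fourth. The upper-right $2\times 2$ block vanishes since in the original $d_D$ the entries $(1,2),(1,4),(3,2),(3,4)$ are all zero, while the lower-left $2\times 2$ block is precisely
$$\left(\begin{matrix} U_A & 0 \\ K & U_B \end{matrix}\right)=U_{\Phi^+},$$
assembled from the off-diagonal entries $(2,1),(2,3),(4,1),(4,3)$ of $d_D$. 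Thus, in the regrouped basis, $d_D$ has exactly the shape of a mapping cone differential, identifying $(D,d_D)$ with $C(U_{\Phi^+})$ where $U_{\Phi^+}:C(\Phi^+)\to C(\Phi^+)$.

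From $d_D^2=0$ (already established in the previous lemma), the relations $d_A U_A=U_A d_A$, $d_B U_B=U_B d_B$, and $\Phi^+ U_A+d_B K = U_B\Phi^+ + K d_A$ all drop out; the third is the chain homotopy relation for $K$, and together they say exactly that $U_{\Phi^+}$ is a chain map on $C(\Phi^+)$. The exact triangle \eqref{eqn: second triangle} is then the standard long exact sequence of the mapping cone $C(U_{\Phi^+})=D$, namely the sequence associated with the short exact sequence $0\to C(\Phi^+)\to D\to C(\Phi^+)\to 0$ that extracts the target copy as a subcomplex and the source copy as the quotient, with connecting morphism $(U_{\Phi^+})_*$.

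The argument is essentially bookkeeping, and the only point needing any care is keeping track of which of the two $A$-summands (and two $B$-summands) of $D$ plays the role of the ``source'' and which the ``target'' of $U_{\Phi^+}$; once this matches the reordering $(1,3,2,4)$ used above, everything lines up mechanically with the shape of $d_D$ dictated in the first lemma.
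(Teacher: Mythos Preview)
Your proof is correct and is essentially the same as the paper's: the paper defines $D'=A\oplus B\oplus A\oplus B$ with the cone differential of $U_{\Phi^+}$ and exhibits the explicit permutation matrix $f$ swapping the second and third summands as an isomorphism $D\stackrel{\sim}{\to}D'$, which is exactly your reordering $(1,2,3,4)\mapsto(1,3,2,4)$. Your additional remark that the chain-map identity for $U_{\Phi^+}$ can be read off from $d_D^2=0$ is a nice touch the paper leaves implicit.
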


\begin{proof}
Let $C(\Phi^+)=A\oplus B$ be the cone of $\Phi^+$ with differential $d_{\Phi^+}=\begin{pmatrix} d_A & 0 \\ \Phi^+ & d_B \end{pmatrix}$. Then
$U_{\Phi^+}: (C(\Phi^+),d_{\Phi^+})\to (C(\Phi^+),d_{\Phi^+})$ is a chain map. Hence the complex $(D', d_{D'})$, where $D'= A \oplus B \oplus A \oplus B$ and
$$d_{D'} = \begin{pmatrix} d_{\Phi^+} & 0 \\ U_{\Phi^+} & d_{\Phi^+} \end{pmatrix}= \begin{pmatrix}
d_A  &  0    &  0  &  0  \\
\Phi^+  &  d_B &  0  &  0  \\
U_A &  0     &  d_A & 0 \\
K     & U_B  &  \Phi^+  & d_B
\end{pmatrix}, $$
is the cone of $U_{\Phi^+}$. Moreover $f \colon D \to D'$
where
$$f = \left ( \begin{matrix}
1 & 0 & 0 & 0 \\
0 & 0 & 1 & 0 \\
0 & 1 & 0 & 0 \\
0 & 0 & 0 & 1
\end{matrix} \right ) $$
is an isomorphism of complexes.
\end{proof}

\begin{lemma}
$U_{\Phi^+}$ is a {\em han} map.
\end{lemma}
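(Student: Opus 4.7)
The plan is to unpack what $U_{\Phi^+}$ does on the cone $C(\Phi^+) = A \oplus B$ (with differential $d_{U(\Phi^+)} = \bigl(\begin{smallmatrix} d_A & 0 \\ \Phi^+ & d_B \end{smallmatrix}\bigr)$) by direct matrix iteration. A simple induction gives
$$U_{\Phi^+}^n(a,b) = \Bigl(U_A^n a,\; M_n(a) + U_B^n b\Bigr), \qquad M_n := \sum_{i+j=n-1} U_B^i \circ K \circ U_A^j,$$
with everything taken mod $2$. On the sub-complex $\{(0,b)\} \cong B$, the map $U_{\Phi^+}$ simply restricts to $U_B$, and on the quotient $C(\Phi^+)/B \cong A$ it induces $U_A$. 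The strategy is to first collapse a general class to one supported in the $B$-summand using the han property of $U_A$, and then kill what remains using the han property of $U_B$.

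More precisely, let $(a,b)$ be a cycle in $C(\Phi^+)$, so $d_A a = 0$ and $\Phi^+ a + d_B b = 0$. Since $U_A$ is \emph{han}, there exists $N_1$ and $a_1 \in A$ with $U_A^{N_1} a = d_A a_1$. Subtracting the boundary $d_{U(\Phi^+)}(a_1, 0) = (d_A a_1, \Phi^+ a_1)$ shows
$$\bigl[U_{\Phi^+}^{N_1}(a,b)\bigr] = [(0, c)] \in H(C(\Phi^+)), \quad c := M_{N_1}(a) + U_B^{N_1} b + \Phi^+ a_1,$$
and a quick computation (or the observation that $(0,c)$ must be a cycle in $C(\Phi^+)$) confirms $d_B c = 0$. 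Since $U_B$ is \emph{han}, there exists $N_2$ and $b_2 \in B$ with $U_B^{N_2} c = d_B b_2$; hence
$$U_{\Phi^+}^{N_2}(0,c) = (0, U_B^{N_2} c) = d_{U(\Phi^+)}(0, b_2).$$
Combining the two steps, $(U_{\Phi^+})_*^{N_1 + N_2}[(a,b)] = 0$ in $H(C(\Phi^+))$, which is the desired conclusion.

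There is really no obstacle here; the argument is purely formal. The only point that deserves a brief sanity check is that $U_{\Phi^+}$ actually is a chain map on $C(\Phi^+)$, which reduces to the identity $U_B \Phi^+ + \Phi^+ U_A = d_B K + K d_A$ — this is exactly the chain-homotopy relation from Theorem~\ref{thm: commutativity} (rewritten in $\F_2$-coefficients, where $\pm$ signs disappear). The broader structural reason the argument works is that the short exact sequence $0 \to B \to C(\Phi^+) \to A \to 0$ is equivariant with respect to the triple $(U_A, U_{\Phi^+}, U_B)$; nilpotence of the ends of a length-two filtration, in the homological sense, forces nilpotence of the middle, with the chain homotopy $K$ contributing only to the intermediate extension.
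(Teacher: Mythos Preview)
Your proof is correct and follows essentially the same approach as the paper's: both exploit the short exact sequence $0 \to B \to C(\Phi^+) \to A \to 0$, first using the \emph{han} property of $U_A$ to push a class into the image of $H(B)$, then the \emph{han} property of $U_B$ to kill it. The paper phrases this at the level of homology via the long exact sequence of the cone, while you carry out the equivalent chain-level computation explicitly; the underlying two-step argument is identical.
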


\begin{proof}
Consider the following commutative diagram with exact rows:
$$\xymatrix{
H(B) \ar[r]^-{i_*} \ar[d]_{U_B^n} & H(C(\Phi^+ )) \ar[r]^-{j_*}  \ar[d]_{U_{\Phi^+}^n} &  H(A)
\ar[d]_{U_A^n} \\
H(B) \ar[r]^-{i_*} \ar[d]_{U_B^m} & H(C(\Phi^+ )) \ar[r]^-{j_*}  \ar[d]_{U_{\Phi^+}^m} &  H(A)
\ar[d]_{U_A^m} \\
H(B) \ar[r]^-{i_*}  & H(C(\Phi^+ )) \ar[r]^-{j_*}   &  H(A)
}$$
Given $x \in H(C(\Phi^+ ))$, we choose $n\in \N$ sufficiently large so that $U_A^n(j_*(x))=j_*(U^n_{\Phi^+}(x))=0$. Then
$U_{\Phi^+}^n(x) = i_*(y)$ for some $y \in H(B)$. Next choose $m\in \N$ sufficiently large so that $U_B^m(y)=0$. Then
$U_{\Phi^+}^{n+m}(x)= U^m_{\Phi^+}(i_*(y))=i_*(U^m_B(y)) =0$.
\end{proof}

\begin{thm}\label{thm: algebraic}
If $\Phi_{alg}$ is a quasi-isomorphism, then $\Phi^+$ is a quasi-isomorphism.
\end{thm}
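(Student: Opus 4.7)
The plan is to chain together the two exact triangles supplied just above the statement, together with the han property of $U_{\Phi^+}$. The argument has essentially three beats.

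First, if $\Phi_{alg}$ is a quasi-isomorphism, then its mapping cone is acyclic. Since the proof of the first lemma identifies $(D,d_D)$ as the mapping cone of $\Phi_{alg} : C(U_A) \to C(U_B)$, this gives $H(D)=0$.

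Second, I feed $H(D)=0$ into the second exact triangle \eqref{eqn: second triangle}. Because the two outer vertices are both $H(C(\Phi^+))$ and the third vertex vanishes, exactness forces the map $(U_{\Phi^+})_* : H(C(\Phi^+)) \to H(C(\Phi^+))$ to be an isomorphism. Consequently each iterate $(U_{\Phi^+})^n_*$ is an isomorphism, in particular injective.

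Third, I combine this with the han property. Given any $x \in H(C(\Phi^+))$, the han lemma produces $n \in \mathbb{N}$ with $(U_{\Phi^+})^n_*(x)=0$. Since $(U_{\Phi^+})^n_*$ is injective by the previous step, this forces $x=0$. Hence $H(C(\Phi^+))=0$, i.e., the mapping cone of $\Phi^+$ is acyclic, which is precisely the statement that $\Phi^+$ is a quasi-isomorphism.

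The argument is short and purely homological once the two triangles and the han property are in hand; the only subtle point is the interplay between the han property and the fact that an iterate of an isomorphism is an isomorphism, which gives the nilpotence-meets-invertibility contradiction that kills $H(C(\Phi^+))$. No obstacle of substance arises here, since all the geometric content has already been packaged into the three preceding lemmas.
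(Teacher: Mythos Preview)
Your proof is correct and follows essentially the same route as the paper: use the first triangle to get $H(D)=0$, then the second to make $(U_{\Phi^+})_*$ an isomorphism, and then combine this with the han property to force $H(C(\Phi^+))=0$. The only difference is cosmetic---you spell out the ``nilpotence-meets-invertibility'' step explicitly, whereas the paper simply remarks that a han map cannot be a quasi-isomorphism unless the homology vanishes, and then invokes the exact triangle for $C(\Phi^+)$ to conclude.
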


\begin{proof}
If $\Phi_{alg}$ is a quasi-isomorphism, then $H(D)=0$ by Exact Triangle~\eqref{eqn: first triangle}. This in turn implies that $U_{\Phi^+}$ is a quasi-isomorphism by Exact Triangle~\eqref{eqn: second triangle}. However the {\em han} map $U_{\Phi^+}$ cannot be a quasi-isomorphism, unless $H(C(\Phi^+ ))=0$. Finally, the triangle
$$\xymatrix{
H(A) \ar[rr]^{\Phi^+_*} & & H(B) \ar[dl] \\
& H(C(\Phi^+ )) \ar[ul] &
}$$
implies that $\Phi^+$ is a quasi-isomorphism.
\end{proof}

We finish this subsection with a lemma which compares the homology of $C(U)$  with that of $\ker U$.

\begin{lemma} \label{lemma: reduction to kernel}
Let $(C, d)$ be a chain complex and let $U:C \to C$ be a chain map. If $U$ is surjective, then the inclusion
\begin{align*}
i : \ker U & \to C(U) \\
x & \mapsto \binom{x}{0}
\end{align*}
is a quasi-isomorphism.
\end{lemma}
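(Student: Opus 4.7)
The strategy is to exhibit $i$ as the inclusion of a subcomplex whose quotient is acyclic, and then invoke the long exact sequence. Concretely, I form the short exact sequence of chain complexes
\begin{equation*}
0 \longrightarrow \ker U \xrightarrow{\;i\;} C(U) \xrightarrow{\;\pi\;} Q \longrightarrow 0,
\end{equation*}
where $Q = C(U)/i(\ker U)$, and aim to prove $H(Q)=0$. First, I would verify that $i$ is well-defined as a chain map: for $x \in \ker U$,
\begin{equation*}
d_{C(U)}(x,0) = (dx,\, Ux) = (dx,\, 0),
\end{equation*}
and $dx \in \ker U$ since $U(dx)=d(Ux)=0$.

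Next, I would identify $Q$ explicitly. As $\F$-vector spaces, $Q = (C/\ker U) \oplus C$, with the induced differential $d_Q([x],y) = ([dx],\, Ux + dy)$. Because $U$ is surjective, it descends to an isomorphism $\overline U : C/\ker U \xrightarrow{\sim} C$, and I would define $\Psi: Q \to C \oplus C$ by $\Psi([x],y) = (Ux, y)$. A direct computation, using $U d = d U$, gives
\begin{equation*}
\Psi(d_Q([x],y)) = (U(dx),\, Ux + dy) = (d(Ux),\, Ux + dy),
\end{equation*}
which is precisely the differential of the mapping cone of $\mathrm{id}_C$ applied to $(Ux,y)$. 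Thus $\Psi$ is an isomorphism of chain complexes between $Q$ and $C(\mathrm{id}_C)$.

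The cone of the identity is acyclic: if $(x',y) \in C \oplus C$ satisfies $dx'=0$ and $x'+dy=0$, then $x'=dy$ over $\F=\Z/2\Z$, so $(x',y) = (dy,y) = d_{C(\mathrm{id})}(y,0)$. Therefore $H(Q)=0$, and the long exact sequence associated with the short exact sequence above forces $i_*: H(\ker U) \to H(C(U))$ to be an isomorphism in every degree.

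The only step that is not purely formal is the identification in the second paragraph, which requires care in tracking how the differential of $C(U)$ descends to the quotient; once that is in hand, the remainder is routine. No grading subtleties arise because we are working over $\F$ and the paper's cone convention uses no signs.
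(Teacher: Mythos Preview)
Your proof is correct and follows essentially the same approach as the paper's: both form the short exact sequence $0 \to \ker U \to C(U) \to Q \to 0$ and identify the quotient $Q$ with the cone of the induced isomorphism $\overline{U}\colon C/\ker U \to C$, which is therefore acyclic. The only cosmetic difference is that the paper stops at $C(\overline{U})$ and invokes that the cone of an isomorphism is acyclic, whereas you transport once more via $\overline{U}$ to $C(\mathrm{id}_C)$ and verify acyclicity by hand.
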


\begin{proof}
Let $\overline{U} : C/\ker U \to C$ be the map induced by $U$. We have a short exact sequence of complexes
$$ 0 \to \ker U \to C(U) \to C(\overline{U}) \to 0,$$
which induces the exact triangle:
$$ \xymatrix{
H(\ker U) \ar[rr]^{i_*} & & H(C(U)) \ar[dl] \\
& H(C(\overline{U})) \ar[ul] &
}$$
Since $U$ is surjective, $\overline{U}$ is an isomorphism. Hence $H(C(\overline{U}))=0$ and the lemma follows.
\end{proof}

\subsection{Heegaard Floer chain complexes} \label{subsection: HF chain complexes}

Recall the subcomplex $\widehat{CF'}(S_0,{\bf a},\hh({\bf a}))$ of $\widehat{CF}(\Sigma,\bs\alpha,\bs\beta,z^f)$ from Section~I.\ref{P1-subsubsection: variant CF of S}, which is generated by $\mathcal{S}_{{\bf a}, \hh({\bf a})}$; let
$$j': \widehat{CF'}(S_0,{\bf a},\hh({\bf a}))\to \widehat{CF}(\Sigma,\bs\alpha,\bs\beta,z^f)$$ be the natural inclusion map. We are viewing
$$\widehat{CF}(\Sigma,\bs\alpha,\bs\beta,z^f)\subset CF^+(\Sigma,\bs\alpha,\bs\beta,z^f)$$
as the subcomplex generated by elements of the form $[{\bf y},0]$. The chain complex $\widehat{CF}(S_0,{\bf a},\hh({\bf a}))$ is the quotient $\widehat{CF'}(S_0,{\bf a},\hh({\bf a}))/\sim$, defined in Section~I.\ref{P1-subsubsection: variant CF of S}.

\begin{lemma} \label{isom on homology}
There is an isomorphism $j: \widehat{HF}(S_0,{\bf a},\hh({\bf a}))\to \widehat{HF}(\Sigma,\bs\alpha,\bs\beta,z^f)$ given by $[Z]\mapsto [Z].$
\end{lemma}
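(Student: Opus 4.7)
The plan is to factor the map $j$ through the two natural maps
$$ \widehat{CF}(S_0,\mathbf{a},\hh(\mathbf{a})) \xleftarrow{\;q\;} \widehat{CF'}(S_0,\mathbf{a},\hh(\mathbf{a})) \xrightarrow{\;j'\;} \widehat{CF}(\Sigma,\bs\alpha,\bs\beta,z^f),$$
where $q$ is the quotient map by the relation $\sim$ of Section~I.\ref{P1-subsubsection: variant CF of S} and $j'$ is the natural inclusion. Once both $q$ and $j'$ are shown to induce isomorphisms on homology, we can set $j_* = j'_* \circ q_*^{-1}$, and well-definedness together with bijectivity of the formula $[Z]\mapsto [Z]$ follow automatically. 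Step~1 is to record that $q_*$ is an isomorphism: this is built into the construction of the quotient in Section~I.\ref{P1-subsubsection: variant CF of S}, where the relation $\sim$ is set up so that identified generators differ by boundaries, and no chain is killed that was not already a boundary in $\widehat{CF'}$.

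Step~2, the essential content, is to show that $j'$ is a quasi-isomorphism. The generators of the quotient complex $\widehat{CF}(\Sigma,\bs\alpha,\bs\beta,z^f)/\widehat{CF'}(S_0,\mathbf{a},\hh(\mathbf{a}))$ are precisely those tuples $\mathbf{y}\in\mathcal{S}_{\bs\alpha,\bs\beta}$ with at least one coordinate in the set of ``exceptional'' points $\{x_i, x'_i, x''_i\}_{i=1}^{2g}$ lying either in $S_{1/2}$ or on $\partial S_0$. I would filter the quotient by the number of exceptional coordinates occurring in $\mathbf{y}$, noting that, with $z^f$ placed in the large region of $S_{1/2}-\bs\alpha-\bs\beta$ and $\omega$ weakly admissible, the differential is weakly order-preserving for this filtration. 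On the associated graded the differential records only the local thin-strip contributions in a neighborhood of each exceptional point; because every $x''_i$ is the unique interior intersection of $a_i$ and $b_i$ in $S_{1/2}$ and every pair $x_i,x'_i$ bounds a canonical small half-bigon through $\partial S_0$, these local pieces pair up the exceptional-coordinate generators into acyclic two-term subcomplexes. Hence the associated graded, and therefore the quotient itself, is acyclic, so $j'$ is a quasi-isomorphism.

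Combining Steps~1 and 2 and chasing the definition of $j$, we obtain the desired isomorphism. The main obstacle I anticipate is Step~2: to make the thin-strip cancellation argument rigorous one must verify that no other holomorphic strip of positive ECH index $I_{HF}=1$ contributes to the relevant differentials on the associated graded, which requires the index/energy bookkeeping from Section~I.\ref{P1-subsubsection: variant CF of S} together with the fact that any strip hitting the exceptional region $\{x_i,x'_i,x''_i\}$ is forced to lie in a prescribed local model. Once this is in place the rest is linear algebra, and the factorization $j_*=j'_*\circ q_*^{-1}$ delivers the claim.
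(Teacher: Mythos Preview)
Your approach is essentially the one the paper invokes: the paper's proof is a one-line citation of Theorem~I.\ref{P1-t:hf}, and what you have written in Steps~1 and~2 is a sketch of that theorem's proof, via the factorization $j_* = j'_* \circ q_*^{-1}$ through $\widehat{CF'}(S_0,{\bf a},\hh({\bf a}))$. The paper explicitly notes (as you implicitly handle) that there is no chain-level map $\widehat{CF}(S_0,{\bf a},\hh({\bf a}))\to \widehat{CF}(\Sigma,\bs\alpha,\bs\beta,z^f)$, which is why one must pass through $\widehat{CF'}$.

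One point in Step~2 needs correcting. The points $x_i, x'_i$ lie on $\partial S_0$ and are coordinates of the contact class, hence belong to $\mathcal{S}_{{\bf a},\hh({\bf a})}$; they are \emph{not} exceptional in your sense. The quotient $\widehat{CF}(\Sigma,\bs\alpha,\bs\beta,z^f)/\widehat{CF'}$ is generated by tuples with at least one coordinate equal to some $x''_i\in int(S_{1/2})$, and the relevant thin bigons run between $x''_i$ and $x_i$ (or $x'_i$), not between $x_i$ and $x'_i$. Consequently your filtration by ``number of exceptional coordinates'' has the differential \emph{decreasing} that count along the thin bigons, so the cancellation does not occur on the associated graded as you describe; rather, one filters in the opposite direction (or equivalently builds a null-homotopy of the identity on the quotient by replacing an $x_i$ by the corresponding $x''_i$). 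With that adjustment the argument goes through and matches the content of Theorem~I.\ref{P1-t:hf}.
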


\begin{proof}
This follows from the discussion of Theorem~I.\ref{P1-t:hf}.  Note that the natural candidate
$$\widehat{CF}(S_0,{\bf a},\hh({\bf a}))\to \widehat{CF}(\Sigma,\bs\alpha,\bs\beta,z^f),\quad [Z]\to Z$$
for a chain map is not a well-defined map.
\end{proof}

\begin{lemma}  \label{1}
The inclusion $i:\widehat{CF}(\Sigma,\bs\alpha,\bs\beta,z^f)\to C(U)$ given by ${\bf y} \mapsto \begin{pmatrix} [{\bf y},0] \\ 0 \end{pmatrix}$ is a quasi-isomorphism.
\end{lemma}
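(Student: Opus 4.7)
The plan is to obtain this as a direct application of Lemma~\ref{lemma: reduction to kernel}, applied to the chain complex $C = CF^+(\Sigma,\bs\alpha,\bs\beta,z^f)$ together with the $U$-map, which on generators is defined algebraically by $U([{\bf y},i]) = [{\bf y},i-1]$ (with the convention $[{\bf y},-1]=0$). Under that identification, the map $i$ in the statement of this lemma is exactly the inclusion $\ker U \hookrightarrow C(U)$ appearing in Lemma~\ref{lemma: reduction to kernel}, so it is enough to verify the two hypotheses of that lemma: $U$ is surjective, and $\ker U = \widehat{CF}(\Sigma,\bs\alpha,\bs\beta,z^f)$ as subcomplexes of $CF^+$.

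Surjectivity is immediate on generators: for every $[{\bf y},j]$ with $j\in\N$, one has $U([{\bf y},j+1]) = [{\bf y},j]$. For the kernel, the definition gives $U([{\bf y},0]) = 0$, so $\widehat{CF} \subseteq \ker U$; and since $U$ acts on the basis $\{[{\bf y},i]\}_{i\ge 0}$ by the bijection onto $\{[{\bf y},i]\}_{i\ge -1}$ with kernel spanned by the $[{\bf y},0]$, the reverse inclusion holds as well.

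The only point that needs a brief justification is that $\widehat{CF}(\Sigma,\bs\alpha,\bs\beta,z^f)$ really is a subcomplex of $CF^+$ (equivalently, that $\ker U$ is preserved by $\partial_{HF}$). This follows from the decomposition $\partial_{HF} = \sum_{k\ge 0} \partial_k$ recorded just before Section~3: the index $k = i-j$ is the algebraic intersection of a contributing $I_{HF}=1$ curve with the $J$-holomorphic section $\R \times [0,1] \times \{(z')^f\}$, and by positivity of intersections this number is non-negative. Hence $\partial_{HF}[{\bf y},0]$ is a sum of terms $[{\bf y}',j]$ with $j \le 0$, and the condition $j\in\N$ forces $j=0$. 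Thus $\partial_{HF}$ preserves $\widehat{CF}$, and the image of $\partial_{HF}$ applied to any element of $\ker U$ lies in $\ker U$.

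With both hypotheses of Lemma~\ref{lemma: reduction to kernel} verified, the inclusion $\widehat{CF}(\Sigma,\bs\alpha,\bs\beta,z^f) = \ker U \to C(U)$ sending ${\bf y}$ to $\binom{[{\bf y},0]}{0}$ is a quasi-isomorphism. There is no serious obstacle here: the entire argument is formal bookkeeping once one notes the positivity-of-intersections point that makes $\widehat{CF}$ a genuine subcomplex of $CF^+$.
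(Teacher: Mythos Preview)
Your proof is correct and follows exactly the same route as the paper: apply Lemma~\ref{lemma: reduction to kernel} after noting that $U$ is surjective (since $U([{\bf y},i])=[{\bf y},i-1]$ for $i\geq 1$) and that $\ker U \simeq \widehat{CF}(\Sigma,\bs\alpha,\bs\beta,z^f)$. Your added remark on positivity of intersections, justifying that $\widehat{CF}$ is a subcomplex, is a helpful elaboration but not a departure from the paper's argument.
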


\begin{proof}
This follows from Lemma~\ref{lemma: reduction to kernel}, since $U([{\bf y},i])=[{\bf y},i-1]$ for $i\geq 1$ and $\ker U\simeq \widehat{CF}(\Sigma,\bs\alpha,\bs\beta,z^f)$.
\end{proof}

\subsection{ECH chain complexes} \label{subsection: ECH chain complexes}

We describe several ECH chain complexes that are related to $(ECC(M,\lambda_-),\bdry')$ and are constructed from certain subsets $\mathcal{S}$ of the set $\mathcal{P}=\mathcal{P}_{\lambda_-}$ of simple orbits of $R_{\lambda_-}$.  Many of these appeared in \cite[Section~\ref{P0-section: proof of theorem equivalence of ECHs}]{CGH1}. Let $U'$ be the $U$-map of $ECC(M,\lambda_-)$ with respect to $(s_0,z^M)\in \R\times M$, where $z^M$ is a generic point which is sufficiently close to the binding.

Let $\mathcal{O}_{\mathcal{S}}$ be the set of orbit sets that are constructed from $\mathcal{S}$. Then $\mathcal{S}$ is {\em closed} if $\gamma'\in \mathcal{O}_{\mathcal{S}}$, whenever $\gamma\in \mathcal{O}_{\mathcal{S}}$, $\gamma'\in \mathcal{O}_{\mathcal{P}}$, and $\langle \bdry'\gamma,\gamma'\rangle\not=0$ or $\langle U'\gamma,\gamma'\rangle\not=0$. If $\mathcal{S}$ is closed, then let $(A_{\mathcal{S}},\bdry'_{\mathcal{S}})$ be the subcomplex of $ECC(M,\lambda_-)$ generated by $\mathcal{O}_{\mathcal{S}}$ and let $U'_{\mathcal{S}}$ be the restriction of $U'$ to $A_{\mathcal{S}}$. Let $\mathcal{P}|_N\subset \mathcal{P}$ be the set of orbits in the mapping torus $N$. The subsets
$$\mathcal{S}_1=\mathcal{P}|_N\cup\{e', h'\},~ \mathcal{S}_2=\mathcal{P}|_{N\cup\mathcal{N}}\cup\{e',h'\}, ~ \mathcal{P}|_N\cup\{h'\},~ \mathcal{P}|_{N\cup\mathcal{N}}\cup\{h'\},~\mathcal{P}|_N$$ are closed and we write $A_i=A_{\mathcal{S}_i}$, $\bdry'_i=\bdry'_{\mathcal{S}_i}$, and $U_i'=U'_{\mathcal{S}_i}$ for $i=1,2$, as well as
$$\widehat{ECC}^\natural(N)=A_{\mathcal{P}|_N\cup\{h'\}},~~ \widehat{ECC}^{\natural\natural}(N)=A_{\mathcal{P}|_{N\cup\mathcal{N}}\cup\{h'\}},~~ ECC(N)=A_{\mathcal{P}|_N}.$$
Also let $ECC_{2g}(N)\subset ECC(N)$ be the subcomplex generated by orbit sets $\gamma$ satisfying $\langle \gamma,S\times\{t\}\rangle =2g$. Let
$$q_1: ECC_{2g}(N)\to \widehat{ECC}^{\natural}(N), \quad q_2: ECC_{2g}(N)\to \widehat{ECC}^{\natural\natural}(N)$$ be the chain maps given by the natural inclusion. Then we have the following:

\begin{lemma} \label{q}
The chain maps $q_1$ and $q_2$ are quasi-isomorphisms.
\end{lemma}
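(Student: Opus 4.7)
The strategy is to factor $q_2 = \iota \circ q_1$, where $\iota\colon ECC^\natural(N)\to ECC^{\natural\natural}(N)$ denotes the natural inclusion (arising from $\mathcal{P}|_N \cup \{h'\} \subset \mathcal{P}|_{N\cup\mathcal{N}} \cup \{h'\}$), and then to verify separately that $q_1$ and $\iota$ are quasi-isomorphisms. Both steps adapt the action-filtration and mapping-cone arguments from \cite{CGH1}, where analogous comparisons between variants of $ECC$ are carried out in the presence of binding-neighborhood and no-man's-land orbits.

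For $q_1$: In the homological class of intersection $2g$ with a page $S$, the complex $ECC^\natural(N)$ splits as an $\F$-vector space
$$ECC^\natural_{2g}(N) = ECC_{2g}(N) \oplus h'\cdot ECC_{2g-1}(N),$$
where $h'\cdot ECC_{2g-1}(N)$ is generated by orbit sets $\gamma \cdot h'$ with $\gamma \in \mathcal{O}|_N$ of degree $2g-1$. Because $h'$ is hyperbolic (hence of multiplicity at most one) and its Morse--Bott partner $e'$ is excluded from the generators, the differential $\bdry'$ cannot increase $h'$-multiplicity. Thus $ECC^\natural_{2g}(N)$ is the mapping cone $\op{Cone}(\bdry_{h'}\colon h'\cdot ECC_{2g-1}(N)\to ECC_{2g}(N))$, and $q_1$ is a quasi-isomorphism if and only if $\bdry_{h'}$ is. I would then identify $\bdry_{h'}$, via the Morse--Bott degeneration along $T_+$, with a standard gluing/capping map shown to be a quasi-isomorphism by the arguments of \cite{CGH1}.

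For $\iota$: By Lemma~\ref{properties}(7), every orbit of $R_{\lambda_-}$ in $\op{int}(N(K))\cup\mathcal{N}$ has $\lambda_-$-action $\geq \tfrac{1}{2\delta}-\kappa$, which exceeds any fixed action bound once $\delta$ is small. The subcomplex of $ECC^{\natural\natural}(N)$ generated by orbit sets avoiding $\mathcal{N}$-orbits is precisely $ECC^\natural(N)$; its quotient is generated by orbit sets containing at least one orbit in $\mathcal{N}$. Using the Morse--Bott description of $\mathcal{N}$ from Lemma~\ref{description1}, each invariant torus of $R_{\lambda_-}$ at a rational slope contributes an elliptic/hyperbolic pair that, under generic perturbation, yields a cancelling pair in $\bdry'$, making the quotient complex acyclic.

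The main technical obstacle is the identification of $\bdry_{h'}$ as a quasi-isomorphism in the $q_1$ step: this demands a detailed count of $I_{ECH}=1$ curves with an end on $h'$ and their relation, via Morse--Bott gluing and obstruction theory as in \cite{HT1, HT2}, to $I_{ECH}=0$ curves in $\R\times N$. The acyclicity argument for $\iota$ is analogous to, but strictly simpler than, the corresponding acyclicity arguments in \cite{CGH1}, since the high-action orbits in $\mathcal{N}$ are separated from those in $N$ by a large action gap that can be made arbitrarily large by shrinking $\delta$.
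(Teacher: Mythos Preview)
Your overall strategy---factor $q_2=\iota\circ q_1$ and treat the two pieces separately---is close in spirit to the paper's, which cites the stabilization results of \cite{CGH2,CGH3} and \cite{CGH1} for $q_1$ and then builds a quasi-isomorphism $r\colon ECC^{\natural\natural}(N)\to ECC^\natural(N)$ with $r\circ q_2=q_1$ via a direct limit argument. Your $\iota$-step and the paper's $r$-step are essentially dual ways of exploiting the large action gap coming from Lemma~\ref{properties}(7).

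However, your argument for $q_1$ contains a genuine error in the mapping-cone logic. You correctly write $ECC^\natural_{2g}(N)=\op{Cone}\bigl(\partial_{h'}\colon h'\cdot ECC_{2g-1}(N)\to ECC_{2g}(N)\bigr)$ with $q_1$ (in degree $2g$) the inclusion of the subcomplex $ECC_{2g}(N)$. But in the exact triangle for $\op{Cone}(f\colon A\to B)$, the inclusion $B\hookrightarrow\op{Cone}(f)$ is a quasi-isomorphism if and only if $H(A)=0$, \emph{not} if and only if $f$ is a quasi-isomorphism (that condition is instead equivalent to $H(\op{Cone}(f))=0$). So what is actually needed is the acyclicity of $h'\cdot ECC_{2g-1}(N)$, i.e., the vanishing of $ECH_{2g-1}(N)$, and your plan to identify $\partial_{h'}$ with a gluing/capping quasi-isomorphism does not establish this. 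In addition, $ECC^\natural(N)$ has summands in every page-degree $k\ge 0$, not only $k=2g$, and you give no argument for why the summands with $k\neq 2g$ are acyclic; the inclusion $q_1$ cannot be a quasi-isomorphism unless they are. The actual proof that $q_1$ is a quasi-isomorphism is a substantial stabilization theorem handling all degrees at once, and it does not proceed through this cone decomposition.
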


\begin{proof}
The chain map $q_1$ is a quasi-isomorphism by Section~II.\ref{P2-section: stabilization} and \cite[{Section~\ref{P0-subsec: U map}}]{CGH1}. By a direct limit argument similar to that of
 \cite[Proposition~\ref{P0-prop: ECH of two contact forms in M}]{CGH1}, there is a quasi-isomorphism $r: \widehat{ECC}^{\natural\natural}(N)\to \widehat{ECC}^{\natural}(N)$ such that $r\circ q_2 = q_1$. This implies that $q_2$ is also a quasi-isomorphism.
\end{proof}

\begin{lemma} \label{2}
The inclusions $p_1:\widehat{ECC}^\natural(N)\to C(U'_1)$ and $p_2: \widehat{ECC}^{\natural\natural}(N)\to C(U'_2)$ given by $\Gamma\mapsto \begin{pmatrix} \Gamma \\ 0 \end{pmatrix}$ are quasi-isomorphisms.
\end{lemma}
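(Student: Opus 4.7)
The plan is to apply Lemma~\ref{lemma: reduction to kernel} to the chain map $U'_i\colon A_i\to A_i$. This reduces the statement to verifying: (a) $U'_i$ is surjective, and (b) the natural inclusion $B_i\hookrightarrow A_i$ identifies $B_i$ with $\ker U'_i$, where $B_i=ECC^\natural(N)$ for $i=1$ and $B_i=ECC^{\natural\natural}(N)$ for $i=2$. In particular (b) implies that $U'_i$ vanishes on $B_i$, which is precisely what is needed in order for the formula $\Gamma\mapsto\binom{\Gamma}{0}$ to define a chain map in the first place.

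I would analyze $U'_i$ using the vector space decomposition $A_i=\bigoplus_{k\geq 0} B_i\cdot (e')^k$ obtained by extracting the $e'$-multiplicity of each generator, exploiting the choice of $z^M\in\op{int}(N(K))$ close to the perturbed Morse-Bott torus $T_+$ carrying $e'$ and $h'$. The argument then rests on two geometric facts about $I_{ECH}=2$ holomorphic curves through $z'=(s_0,z^M)$ in $(\R\times M,J')$.

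The first fact is that $U'_i|_{B_i}=0$. For $\Gamma\in B_i$, closedness of $\mathcal{S}_i\setminus\{e'\}$ under $U'$ forces any candidate output $\gamma'$ to lie in $B_i$ as well, so neither $\Gamma$ nor $\gamma'$ has any $e'$-asymptotic and the relative class $[\Gamma]-[\gamma']$ has trivial linking with the binding $K$. Any putative curve from $\Gamma$ to $\gamma'$ through $z^M$ must however enter $\op{int}(N(K))$, and a positivity-of-intersections argument with the $J'$-invariant torus $\R\times T_+$---which is foliated by $J'$-holomorphic trivial cylinders over the Morse-Bott family of orbits on $T_+$, via the local model of \cite[Section~\ref{P0-pasadena}]{CGH1}---forces a strictly positive winding number around $K$, a contradiction. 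The second fact is the leading-order identification
$$U'_i(\Gamma\cdot(e')^k)=\Gamma\cdot(e')^{k-1}+(\text{terms of strictly smaller $e'$-multiplicity})$$
for $k\geq 1$ and $\Gamma\in B_i$, where the leading term counts the standard $J'$-holomorphic disk near $e'$ with a single positive end on $e'$ passing through $z'$, glued to trivial cylinders over the remaining orbits of $\Gamma\cdot(e')^{k-1}$. This disk is the unique low-action $I_{ECH}=2$ configuration with these asymptotics, again by the Morse-Bott model near $T_+$.

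Granting these two facts, the reverse inclusion $\ker U'_i\subseteq B_i$ follows by induction on the largest $e'$-multiplicity $k_0$ appearing in a kernel element $x=\sum_k x_k(e')^k$: the leading-order fact shows the $(e')^{k_0-1}$-coefficient of $U'_i(x)$ is $x_{k_0}$ modulo terms of strictly smaller multiplicity, which cannot be canceled, forcing $x_{k_0}=0$. Surjectivity of $U'_i$ follows by a mirror inductive construction of preimages, starting from the naive candidate $y_k(e')^{k+1}$ for each summand and correcting iteratively. Lemma~\ref{lemma: reduction to kernel} then gives the desired quasi-isomorphism $p_i$ for each $i\in\{1,2\}$. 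The principal technical obstacle is the positivity argument in the first geometric fact, which requires careful handling of the Morse-Bott structure at $T_+$ and the precise local model for $U'_i$ near the binding.
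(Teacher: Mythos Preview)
Your approach is essentially the paper's: both apply Lemma~\ref{lemma: reduction to kernel} and reduce to showing that $U'_i$ is surjective with kernel $B_i$. The only difference is in the analysis of $U'_i$. The paper states the \emph{exact} formula
\[
U'_i\bigl((e')^k(h')^l\Gamma\bigr)=(e')^{k-1}(h')^l\Gamma
\]
for $\Gamma\in\mathcal{O}|_N$ (resp.\ $\mathcal{O}|_{N\cup\mathcal{N}}$), citing the analogous computation in \cite{CGH1}; surjectivity and the identification of the kernel are then immediate, with no induction needed. Your weaker leading-order statement with possible lower-order corrections, followed by an induction on $e'$-multiplicity, is correct but more elaborate than necessary: in fact there are no lower-order terms, because the only $I_{ECH}=2$ configuration through $z'$ with asymptotics in $\mathcal{S}_i$ is the meridian plane asymptotic to $e'$ together with trivial cylinders over the remaining orbits. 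Your positivity-of-intersections sketch for $U'_i|_{B_i}=0$ is in the right spirit but would require some care with the Morse-Bott perturbation at $T_+$ (after perturbation $\R\times T_+$ is no longer literally foliated by $J'$-holomorphic cylinders); the paper sidesteps this entirely by invoking the cited computation.
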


\begin{proof}
This follows from Lemma~\ref{lemma: reduction to kernel}. The map $U'_i$, $i=1,2$, is given by:
\begin{equation} \label{defn of U''}
U'_i((e')^k(h')^l\Gamma)=(e')^{k-1}(h')^l\Gamma,
\end{equation}
where $\Gamma\in \mathcal{O}|_N$ or $\mathcal{O}|_{N\cup\mathcal{N}}$; see \cite[Claim~\ref{P0-lemma: computation of U_0}]{CGH1} for a similar calculation. Hence $U'_i$ is surjective, $\ker U'_1= \widehat{ECC}^\natural(N)$, and $\ker U'_2=\widehat{ECC}^{\natural\natural}(N)$. This implies the lemma.
\end{proof}

\begin{lemma} \label{lemma: quasi-isom}
The inclusion $\mathfrak{i}: \widehat{ECC}^{\natural\natural}(N)\to C(U')$ given by $\Gamma\mapsto \begin{pmatrix} \Gamma \\ 0 \end{pmatrix}$ is a quasi-isomorphism.
\end{lemma}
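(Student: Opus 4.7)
The plan is to factor $\mathfrak{i}$ through the quasi-isomorphism $p_2$ of Lemma~\ref{2}, and then reduce the remaining step to showing that a certain quotient mapping cone is acyclic. First I would observe that since $\mathcal{S}_2$ is closed under both $\bdry'$ and $U'$, the subcomplex $A_2\subset ECC(M,\lambda_-)$ is preserved by $U'$ and the restriction equals $U'_2$; hence the inclusion $A_2\hookrightarrow ECC(M,\lambda_-)$ induces a chain map of mapping cones $\iota:C(U'_2)\to C(U')$, and by construction $\mathfrak{i}=\iota\circ p_2$. Since $p_2$ is a quasi-isomorphism by Lemma~\ref{2}, it suffices to show that $\iota$ is a quasi-isomorphism.

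To do so I would form the quotient complex $Q=ECC(M,\lambda_-)/A_2$, equipped with the induced differential and $U$-map $U'_Q$ (both well defined because $\mathcal{S}_2$ is closed). The resulting short exact sequence of $U$-equipped complexes yields a short exact sequence of mapping cones
$$0\to C(U'_2)\xrightarrow{\iota} C(U')\to C(U'_Q)\to 0,$$
whose long exact sequence in homology shows that $\iota$ is a quasi-isomorphism if and only if $H(C(U'_Q))=0$, i.e., if and only if $U'_Q:Q\to Q$ is a quasi-isomorphism.

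The complex $Q$ is freely generated by orbit sets involving at least one simple orbit of $R_{\lambda_-}$ that lies in $\op{int}(N(K))\cup\mathcal{N}$ and is distinct from $e'$ and $h'$. By Lemma~\ref{properties}(7), any such orbit has $\lambda_-$-action at least $\tfrac{1}{2\delta}-\kappa$, with $\kappa$ independent of $\delta$, and $\delta>0$ can be taken arbitrarily small. The plan here is to combine the direct-limit/action-filtration argument of \cite[Proposition~\ref{P0-prop: ECH of two contact forms in V}]{CGH1} with an explicit computation of $U'_Q$ in the style of Equation~\eqref{defn of U''} and \cite[Claim~\ref{P0-claim: computation of U}]{CGH1}. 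Taking $\delta\to 0$ pushes all generators of $Q$ to arbitrarily large action while leaving $A_2$ essentially unchanged, and the Morse--Bott structure at $T_\pm$ shows that $U'_Q$ still acts by decrementing the multiplicity of the nearby elliptic orbit and is therefore surjective with trivial kernel on each piece of the action filtration, giving $H(C(U'_Q))=0$ in the limit.

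The main obstacle is the last step: explicitly identifying $U'_Q$ on the ``no man's land'' part of $Q$ and controlling the holomorphic curves in $\R\times M$ that mix orbits in $\op{int}(N(K))\cup\mathcal{N}$ with those in $N\cup\{e',h'\}$. Establishing the required Morse--Bott identification and the associated energy/action bookkeeping to exclude extraneous curves is delicate, and is the reason the argument ultimately rests on the earlier stabilization and direct-limit results of \cite{CGH1} rather than on a self-contained algebraic manipulation.
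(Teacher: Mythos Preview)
Your factorization $\mathfrak{i}=\iota\circ p_2$ and the reduction to showing $H(C(U'_Q))=0$ are both correct and clean. The gap is in the final step. Taking $\delta\to 0$ changes the contact form $\lambda_-$ and hence the chain complex itself; a direct-limit argument would at best compute something about the limit, not about $C(U')$ for the fixed $\delta$ at hand. More seriously, your claim that $U'_Q$ ``acts by decrementing the multiplicity of the nearby elliptic orbit and is therefore surjective with trivial kernel'' is not justified: generators of $Q$ are orbit sets containing at least one simple orbit in $\op{int}(N(K))\cup\mathcal{N}$ other than $e',h'$, and there is no reason such an orbit set must contain $e'$ at all. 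For those with no $e'$-factor your formula says nothing, and you would need to understand $I=2$ curves through $z'$ that hit the long orbits in no man's land --- exactly the obstacle you flag but do not resolve.

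The paper avoids this by a different route. Instead of passing to the quotient $Q$, it puts a \emph{topological} filtration $\widehat{\mathcal{F}}$ on $C(U')$ by the $H_1(N(K);\Z)$-class of the $N(K)$-part of each orbit set (the binding generating $\Z$), and the trivial filtration on $ECC^{\natural\natural}(N)$. The map $\mathfrak{i}$ is filtered, and on the $E^1$-page the induced map is identified with $(p_2)_*$; the argument for this identification is the one carried out in \cite[Section~\ref{P0-section: proof of theorem equivalence of ECHs}]{CGH1}. Since a filtered chain map between bounded-below filtered complexes that induces an isomorphism on some $E^r$-page is a quasi-isomorphism, the result follows. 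The point is that on the associated graded the long orbits in $\op{int}(N(K))\cup\mathcal{N}$ are frozen (any curve changing their contribution changes the filtration level), so one never has to compute $U'$ on them directly --- precisely the computation your approach cannot avoid.
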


\begin{proof}
This is similar to the argument in \cite[Section~\ref{P0-section: proof of theorem equivalence of ECHs}]{CGH1}.

Choose an identification $\eta: H_1(N(K);\Z)\stackrel\sim\to \Z$ such that the homology class of the binding is $1$.  Define the filtration $\mathcal{F}: ECC(M)\to \Z^{\geq 0}$ such that $$\mathcal{F}\left(\sum_i \gamma_i\otimes \Gamma_i\right)= \max_i\eta([\gamma_i]),$$
where $\gamma_i\in \mathcal{O}|_{N(K)}$ and $\Gamma_i\in \mathcal{O}|_{N\cup\mathcal{N}}$. Let $\mathcal{F}^{\natural\natural}: \widehat{ECC}^{\natural\natural}(N)\to \Z^{\geq 0}$ be its restriction to $\widehat{ECC}^{\natural\natural}(N)$. (Note that $\mathcal{F}^{\natural\natural}$ is a trivial filtration.) Next define the filtration $\widehat{\mathcal{F}}: C(U')\to \Z^{\geq 0}$ such that
$$\widehat{\mathcal{F}}\left( \begin{array}{c} \sum_i \gamma_i\otimes\Gamma_i \\ \sum_j \gamma_j'\otimes \Gamma_j'\end{array}  \right)= \max_{i,j} \{ \eta([\gamma_i]),\eta([\gamma'_j])\}.$$

The map ${\frak i}$ is an $(\mathcal{F}^{\natural\natural},\widehat{\mathcal{F}})$-filtered chain map.  The induced map
$$E^1({\frak i}): E^1(\mathcal{F}^{\natural\natural})\to E^1(\widehat{\mathcal{F}})$$
on the $E^1$-level agrees with the isomorphism $(p_2)_*$; the proof is similar to that of \cite[Section~\ref{P0-section: proof of theorem equivalence of ECHs}]{CGH1}. If a filtered chain map between filtered chain complexes which are bounded below is an isomorphism on the $E^r$-level, then it is a quasi-isomorphism. This implies that ${\frak i}$ is a quasi-isomorphism.
\end{proof}

\subsection{Completion of proof of Theorem~\ref{thm: HF+=ECH}} \label{completion}

By Theorems~\ref{thm: U-map}, \ref{thm: chain map}, and \ref{thm: commutativity}, the map
$$\Phi^+: CF^+(\Sigma,\bs\alpha,\bs\beta,z^f)\to ECC(M,\lambda_-)$$
is a chain map which commutes with $U$ and $U'$ up to the chain homotopy $K^+=K+\Phi^+ \circ H$, where $H$ is given in Theorem~\ref{thm: U-map} and $K$ is given in Theorem~\ref{thm: commutativity}. Here $U$ is the formal $U$-map on $(CF^+(\Sigma,\bs\alpha,\bs\beta,z^f),\bdry)$ and $U'$ is the $U$-map on $(ECC(M,\lambda_-),\bdry')$. In view of Theorem~\ref{thm: algebraic}, Theorem~\ref{thm: HF+=ECH} immediately follows from:

\begin{thm} \label{thm: phi alg}
The algebraic map $\Phi_{alg}$ is a quasi-isomorphism.
\end{thm}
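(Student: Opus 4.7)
The plan is to reduce the quasi-isomorphism of $\Phi_{alg}$ to the already-established quasi-isomorphism of $\Phi$ (Theorem~\ref{thm: Phi is a quasi-isom}) by constructing a chain-level commutative rectangle whose other sides are all known quasi-isomorphisms. The key observation is that when restricted to the subcomplex $\widehat{CF'}(S_0,{\bf a},\hh({\bf a}))\subset \widehat{CF}(\Sigma,\bs\alpha,\bs\beta,z^f)$ generated by tuples ${\bf y}\subset S_0$, the lower $K^+$ entry of $\Phi_{alg}\circ i$ vanishes and the upper $\Phi^+$ entry coincides with $\Phi$, so that $\Phi_{alg}$ is rigidly determined on the image of this subcomplex.

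Concretely, I would form the diagram
\begin{equation*}
\xymatrix@R=0.35in{
\widehat{CF'}(S_0,{\bf a},\hh({\bf a})) \ar[rr]^-{\Phi} \ar[d]_{j'} & & ECC_{2g}(N) \ar[d]^{q_2} \\
\widehat{CF}(\Sigma,\bs\alpha,\bs\beta,z^f) \ar[d]_{i} & & ECC^{\natural\natural}(N) \ar[d]^{\mathfrak{i}} \\
C(U) \ar[rr]^-{\Phi_{alg}} & & C(U')
}
\end{equation*}
and verify that $\Phi_{alg}\circ i\circ j' = \mathfrak{i}\circ q_2\circ \Phi$ on the nose. For a generator ${\bf y}\in \mathcal{S}_{{\bf a},\hh({\bf a})}$ the left-then-down route produces the column vector with entries $\Phi^+([{\bf y},0])$ and $K^+([{\bf y},0])$, while the up-then-right route produces $\Phi([{\bf y},0])$ stacked over $0$. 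The equality of top entries follows from Theorem~\ref{thm: control}: every $W^+$-curve with $\mathcal{F}=0$ and $I_{W^+}=0$ from such a ${\bf y}$ has image in $W_+$ and limits in $\mathcal{O}|_N$, so the curves counted by $\Phi^+([{\bf y},0])$ are exactly those counted by $\Phi([{\bf y},0])$, and $q_2$ is the tautological inclusion on these elements. The vanishing of the bottom entry uses $K^+ = K + \Phi^+\circ H$ together with Theorem~\ref{thm: U-map} (which gives $H([{\bf y},0])=0$) and Theorem~\ref{thm: K} (which gives $K([{\bf y},0])=0$).

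Next I collect the relevant quasi-isomorphisms in the diagram: $i$ (Lemma~\ref{1}), $\mathfrak{i}$ (Lemma~\ref{lemma: quasi-isom}), $q_2$ (Lemma~\ref{q}), $\Phi$ (Theorem~\ref{thm: Phi is a quasi-isom}), and $j'$, which is a quasi-isomorphism via the factorization $\widehat{CF'}\twoheadrightarrow \widehat{CF}(S_0,{\bf a},\hh({\bf a}))\stackrel{j}{\to} \widehat{HF}(\Sigma,\bs\alpha,\bs\beta,z^f)$ of Lemma~\ref{isom on homology}, combined with the acyclicity of the kernel of the quotient recalled in Section~I.\ref{P1-subsubsection: variant CF of S}. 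Passing to homology, $(\Phi_{alg})_*\circ (i\circ j')_* = (\mathfrak{i}\circ q_2\circ \Phi)_*$, and since $(i\circ j')_*$ is an isomorphism and the right-hand side is a composition of isomorphisms, $(\Phi_{alg})_*$ is forced to be an isomorphism, concluding the proof.

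The main obstacle is the chain-level commutativity of the top half of the diagram, namely the identities $\Phi^+\circ j' = q_2\circ \Phi$ and $K^+\circ j' = 0$. Both rely crucially on the tight control of $W^+$-curves furnished by Theorems~\ref{thm: control}, \ref{thm: U-map}, and \ref{thm: K}, which in turn require the careful choice of the small parameter $\delta$, positivity of intersections against the holomorphic divisor $S_{(z')^f}$ and the holomorphic pages $C_\theta$, and the action bound on orbits in the ``no man's land'' $\mathcal{N}$ and the neighborhood $N(K)$ of the binding. Once these three technical ingredients are in hand, the rest of the argument is purely diagrammatic and uses only the two-out-of-three property of quasi-isomorphisms.
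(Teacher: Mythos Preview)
Your proposal is correct and follows essentially the same route as the paper: build a commutative square linking $\Phi_{alg}$ to the established quasi-isomorphism $\Phi$ via the identifications $\Phi^+|_{\widehat{CF'}}=\Phi$ and $K^+|_{\widehat{CF'}}=0$ supplied by Theorems~\ref{thm: control}, \ref{thm: U-map}, and \ref{thm: K}, then conclude by the two-out-of-three property using Lemmas~\ref{1}, \ref{q}, \ref{lemma: quasi-isom}, and \ref{isom on homology}. The only cosmetic difference is that the paper carries out the last step on homology (replacing $j'$ by the homology-level map $j$ and $\Phi'$ by its induced $\Phi_*$), whereas you stay at the chain level by invoking the acyclicity of the kernel of the quotient $\widehat{CF'}\twoheadrightarrow\widehat{CF}(S_0,{\bf a},\hh({\bf a}))$ to see that $j'$ itself is a quasi-isomorphism.
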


Let $\Phi': \widehat{CF'}(S_0,{\bf a},\hh({\bf a}))\to ECC_{2g}(N)$ be the map from Definition~I.\ref{P1-defn: Phi'}. The map $\Phi'$ descends to $\Phi: \widehat{CF}(S_0,{\bf a},\hh({\bf a}))\to ECC_{2g}(N)$, which was shown to be a quasi-isomorphism in \cite{CGH2,CGH3}.  Here we are using $ECC_{2g}(N)$ instead of $PFC_{2g}(N)$, but there is no substantial difference.

Observe that there is a discrepancy between the algebra and the geometry: the map $\Phi_{alg}$ which we are using here is not the map $\Phi$, and we need to reconcile the two.

\begin{proof}
If $Z\in \widehat{CF'}(S_0,{\bf a},\hh({\bf a}))$, then $\Phi^+(Z)=\Phi'(Z)$ by Theorem~\ref{thm: control}.
We observed in Theorem~\ref{thm: U-map} that  $H(Z)=0$. Moreover, $K(Z)=0$ by Theorem~\ref{thm: K} and thus $K^+(Z)=0$. Hence
$$\Phi_{alg} \left ( \begin{matrix} Z \\ 0 \\ \end{matrix} \right ) = \left ( \begin{matrix} \Phi^+ (Z ) \\ K^+(Z) \\ \end{matrix} \right ) =\left ( \begin{matrix} \Phi' (Z ) \\ 0 \\ \end{matrix} \right ),$$
and the following diagram is commutative:
$$\xymatrix{
\widehat{CF'}(S_0,{\bf a},\hh({\bf a})) \ar[r]^-{\Phi'} \ar[d]_{i\circ j'} & ECC_{2g}(N) \ar[d]_{{\frak i}\circ q_2} \\
C(U)  \ar[r]^-{\Phi_{alg}} & C(U').
}$$
This gives rise to the following commutative diagram of homology groups:
$$\xymatrix{
\widehat{HF}(S_0,{\bf a},\hh({\bf a})) \ar[r]^-{\Phi_*} \ar[d]_{i_*\circ j} & ECH_{2g}(N) \ar[d]_{({\frak i}\circ q_2)_*} \\
H(C(U))  \ar[r]^-{(\Phi_{alg})_*} & H(C(U')).
}$$
Since $j$, $i_*$, $\Phi_*$, $(q_2)_*$, and ${\frak i}_*$ are isomorphisms by Lemma~\ref{isom on homology}, Lemma~\ref{1}, \cite{CGH2,CGH3}, Lemma~\ref{q}, and Lemma~\ref{lemma: quasi-isom},  $\Phi_{alg}$ itself is a quasi-iso\-mor\-phism.
\end{proof}

\vskip0.5cm

\n {\em Acknowledgements.} We are indebted to Michael Hutchings for
many helpful conversations and for our previous collaboration which
was a catalyst for the present work. We also thank Tobias Ekholm,
Dusa McDuff, Ivan Smith and Jean-Yves Welschinger for illuminating
exchanges.  Part of this work was done while KH and PG visited MSRI
during the academic year 2009--2010. We are extremely grateful to
MSRI and the organizers of the ``Symplectic and Contact Geometry and
Topology'' and the ``Homology Theories of Knots and Links'' programs
for their hospitality; this work probably would never have seen the
light of day without the large amount of free time which was made
possible by the visit to MSRI.

\end{document}